\theoremstyle{plain}
\newtheorem{theorem}{Theorem}[section]
\newtheorem{prop}[theorem]{Proposition}
\newtheorem{lemma}[theorem]{Lemma}
\newtheorem{hyp}[theorem]{Assumption}
\newcommand{\N}{\mathbb{N}}
\newcommand{\R}{\mathbb{R}}
\def\P{\mathbb{P}}
\newcommand{\E}{\mathbb{E}}
\newcommand{\M}{\mathcal{M}}
\newcommand{\D}{\mathbb{D}}
\newcommand{\tvnorm}{\|\cdot\|_{TV}}
\newcommand{\e}{\mathrm{e}}
\newcommand{\inftynorm}[1]{\left\lVert#1\right\rVert_{\infty}}
\newcommand{\C}{\mathcal{C}} 
\newcommand{\ind}{\mathbf{1}}
\newcommand{\setind}[1]{\mathbf{1}_{\left\{#1\right\}}}
\newcommand{\bbrackets}[2]{\llbracket #1, #2 \rrbracket}
\newcommand{\angles}[2]{\langle #1, #2 \rangle}
\newcommand{\OmegaG}{\Omega_\mathcal{G}}
\newcommand{\OmegaCvgce}{\OmegaG^*}
\newcommand{\config}{\mathbb{S}}
\newcommand{\nmax}{n_{\max}}
\newcommand{\bfs}{\mathbf{s}}
\newcommand{\bfi}{\mathbf{i}}
\newcommand{\bfn}{\mathbf{n}}
\newcommand{\bfv}{\mathbf{v}}
\newcommand{\bfB}{\mathbf{B}}
\newcommand{\B}{\mathbb{B}}
\newcommand{\I}{\mathcal{I}}
\newcommand{\A}{\mathcal{A}}
\newcommand{\tilzeta}{\tilde{\zeta}}
\newcommand{\tileta}{\tilde{\eta}}
\title{Large population limit for a multilayer \emph{SIR} model including households and workplaces}
\author[1,2]{Madeleine Kubasch}
\affil[1]{\footnotesize Centre de math\'ematiques appliqu\'ees (CMAP), Ecole Polytechnique, 91128 Palaiseau, France}
\affil[2]{\footnotesize MaIAGE, INRAE, Universit\'e Paris-Saclay, 78350 Jouy-en-Josas, France}
\date{\today}
\begin{document}

\maketitle


\begin{abstract}
  We study a multilayer \emph{SIR} model with two levels of mixing, namely a global level which is uniformly mixing, and a local level with two layers distinguishing household and workplace contacts, respectively. 
  We establish the large population convergence of the corresponding stochastic process. 
  For this purpose, we use an individual-based  model whose state space specifies the remaining infectious period length for each infected. 
  This allows to deal with the  natural correlation of the epidemic states of individuals whose household and workplace share a common infected. 
  In a general setting where a non-exponential distribution of infectious periods may be considered, convergence to the unique deterministic solution of a measure-valued equation is obtained. 
  In the particular case of exponentially distributed infectious periods, we show that it is possible to further reduce the obtained deterministic limit, leading to a closed, finite dimensional dynamical system capturing the epidemic dynamics. 
  This model reduction subsequently is studied from a numerical point of view. 
  We illustrate that the dynamical system derived from the large population approximation is a pertinent model reduction when compared to simulations of the stochastic process or to an alternative edge-based compartmental model, both in terms of accuracy and computational cost.
    \newline\newline
    \textbf{Keywords.} Measure-valued process, SDE with jumps, large population limit, model reduction, epidemic process, household-workplace models, two layers of mixing.
    \newline\newline
    \textbf{Code availability.} \url{https://github.com/m-kubasch/household-workplace-model}
\end{abstract}



\section{Introduction}
\label{sec:intro}

Epidemic spread depends by essence on the way individuals interact with one another. As a consequence, models have been developed which take into account main features of real-life contacts, for instance through contact networks \cite{kissMathematicsEpidemicsNetworks2017}. In particular, some attention has been drawn to studying clustered networks, as clustering has a strong impact on epidemic spread \cite{hebert-dufresnePropagationDynamicsNetworks2010, volzEffectsHeterogeneousClustered2011} which is intimately related to the way clustering is achieved within the network \cite{houseInsightsUnifyingModern2011}. A particular form of clustering consists in the presence of entirely connected small social structures, such as households or workplaces, which exist in addition to random contacts in the general population.  
This kind of population structure is captured by models with two levels of mixing \cite[and references therein]{brittonChapterGeneralTwoLevel2019}, which are related to efficient control measures. Indeed, both COVID-19 and influenza epidemics illustrate the pertinence of teleworking and school closures \cite{mendez-britoSystematicReviewEmpirical2021, simoySociallyStructuredModel2021, lucaImpactRegularSchool2018}, and models explicitly distinguishing different contact types are well suited to simulate the impact of these measures \cite{dilauroImpactContactStructure2021}. However, precisely understanding the impact on disease propagation of the way individuals are organized in households and workplaces is not straightforward \cite{bansayeEpidemiologicalFootprintContact2023}.

This motivates the study of models with several levels of contact, which as we shall see lead to interesting mathematical issues due to their multiscale population structure. Pellis \emph{et al.} \cite{pellisThresholdParametersModel2009} have proposed a model with two levels of mixing structured in three layers of contacts: households, workplaces and the general population.
This household-workplace model has already been studied to some extent, establishing for instance the epidemic growth rate \cite{pellisEpidemicGrowthRate2011, bansayeEpidemiologicalFootprintContact2023} and several reproduction numbers \cite{ballReproductionNumbersEpidemic2016}. In particular, the $R_0$ used throughout this paper was introduced in \cite{pellisThresholdParametersModel2009}, and computations of $R_0$ and proportions of infections per contact layer will make use of the working package associated to \cite{bansayeEpidemiologicalFootprintContact2023}.

One drawback of this model is its complexity, both mathematically and numerically. Indeed, it is not simple to analyse due to correlations arising as soon as an individual may belong to several small contact structures at once. Also, simulations require a significant amount of computation time, especially when considering larger population sizes. As a consequence, it is of interest to develop reduced models, which may be more prone to theoretical studies and/or numerical exploration. In particular, large population approximations of stochastic models have proven fruitful to achieve such model reductions in many contexts, among which epidemics on random graphs. 

Historically, the standard \emph{SIR} model developed by Kermack and McKendrick itself corresponds to the large population limit of the uniformly mixing stochastic \emph{SIR} model. In the Markovian setting, the convergence of the stochastic model to its deterministic limit can be established using classical results on the convergence of finite type density-dependent Markov jump processes, \emph{e.g.} \cite{anderssonDensityDependentJump2000}. When infectious periods are not restricted to being exponentially distributed, the large population convergence of the stochastic model to the unique deterministic solution of a system of integral equations can also be obtained \cite{kurtzEpidemicModels1981, forienRecentAdvancesEpidemic2022}. For more complex contact networks however, it often is challenging to propose closed systems of equations correctly describing the epidemic dynamics. 

A well-understood case is the \emph{SIR} model on the configuration graph, for which a reduced model referred to as edge-based compartmental model (EBCM)  \cite{volzSIRDynamicsRandom2008, millerNotePaperErik2011} has been proven to be the large population limit of the underlying stochastic model \cite{decreusefondLargeGraphLimit2012, jansonLawLargeNumbers2014}. Since then, the equivalence with other reduced models has been established under appropriate assumptions \cite{houseInsightsUnifyingModern2011, wilkinsonRelationshipsMessagePassing2017, jacobsenLargeGraphLimit2018, kissNecessarySufficientConditions2022}, and the EBCM formalism has been extended to related models \cite{sherborneMeanfieldModelsNonMarkovian2018, jacobsenLargeGraphLimit2018}.  The configuration graph is a favourable setting for this analysis thanks to the absence of clustering in the large graph limit, which however also constitutes a major limitation, prohibiting for instance the existence of household-like structures.

Some attention has thus been drawn to models where each individual belongs to a random number of fully connected subgraphs (\emph{cliques}) of the same type, hence being closely related to the household-workplace model. Several reduced models have been proposed, including an EBCM \cite{volzEffectsHeterogeneousClustered2011, st-ongeHeterogeneousTransmissionGroups2023, hebert-dufresnePropagationDynamicsNetworks2010}. To our knowledge, the convergence of the underlying stochastic model to the proposed reduced model has not been established in any of these settings. Notice that these models share a major common point, which will also hold true in our setting: they focus on the epidemic at the level of structures, as they keep track of the proportions of cliques containing a certain number of susceptibles and infected, leading to high-dimensional dynamical systems for larger clique sizes. 

When considering two levels of mixing, the first model for which a large population limit has been determined is the household model, which assumes a uniformly mixing general population and that each individual belongs to exactly one household \cite{houseDeterministicEpidemicModels2008}, thus being a special case of the household-workplace model. Here, the stochastic model can again be formalized as a finite type density-dependent Markov jump process, ensuring the large population convergence to the deterministic model. If either of these two assumptions is relaxed, \emph{e.g.} considering a configuration graph at the global level \cite{dilauroImpactContactStructure2021, maEffectiveDegreeHousehold2013} or individuals belonging to several households \cite{barnardEdgeBasedCompartmentalModelling2018}, reduced models have been proposed, but without rigorous derivation from stochastic models. In particular, the case of the household-workplace model is not covered, and the only reduced models proposed so far approach the epidemic dynamics using well calibrated uniformly mixing models \cite{bansayeEpidemiologicalFootprintContact2023, delvallerafoDiseaseDynamicsMean2021}. While these are capable of capturing some key characteristics of the epidemic, such as the epidemic peak size and final size, they do not allow for an accurate prediction of the epidemic dynamic over time. 

As a consequence, in this paper, we will study the large population limit of the multilayer \emph{SIR} model with households and workplaces. In order to do so, we will formalize the model in a finite population as an individual-based stochastic process, and establish that this sequence of processes converges in law when the size of the population grows to infinity. This allows to identify a new model reduction, and establishes that it is asymptotically exact. Besides, it paves the way for more quantitative estimates on this approximation.

Notice here that each infected individual correlates the epidemic spread in his household and workplace, being infectious for exactly the same period of time in both structures. In order to deal with this dependence, the duration of infectious periods will explicitly be taken into account in the mathematical representation of the model. This difficulty actually arises as soon as one considers the probability of an individual belonging to several cliques at once, whether they are of different types or not, and we refer to \cite{ballEpidemicsRandomIntersection2014} where a similar approach has been developed for branching approximations. 
Let us emphasize that this model formulation allows to immediately consider a wide range of infectious period length distributions instead of being restricted to the Markovian case, which is a pertinent generalisation for many epidemic models \cite{sherborneMeanfieldModelsNonMarkovian2018, forienRecentAdvancesEpidemic2022, fengEpidemiologicalModelsNonExponentially2007, lloydRealisticDistributionsInfectious2001}.

The model will be represented by a measure-valued process mixing discrete and continuous components. More precisely, we establish the convergence of the individual-based process to the unique solution of an explicit measure-valued equation. In the particular case where this distribution is exponential, it is possible to go one step further and reduce the epidemic dynamics to a closed, finite dimensional dynamical system which is similar in spirit to reductions proposed in related settings \cite{houseDeterministicEpidemicModels2008, st-ongeHeterogeneousTransmissionGroups2023}. 
\bigskip

The present paper is structured as follows. Section \ref{sec:model-presentation} introduces the individual-based model, and Section \ref{sec:results} subsequently presents the convergence results in detail. Section \ref{sec:numeric} is devoted to numerical aspects. We first illustrate that the obtained dynamical system is in good accordance with stochastic simulations, discuss its implementation and examine its computational cost in terms of computation time compared to stochastic simulations. Next, we confront our reduced model to an alternative model reduction which we obtain using the EBCM formalism. Finally, Section \ref{sec:proofs} contains the proofs of our results. 
\bigskip

Before proceeding, let us introduce some notations that will be used throughout the paper. For any integers $n \leq m$, we write $\bbrackets{n}{m} = \{n, \cdots, m\}$. For a measurable space $(E, \mathcal{E})$, let $\M_P(E)$ be the set of point measures, $\M_F(E)$ the set of finite measures  and $\M_1(E)$ the set of probability measures on $E$. We define $\M_{P,1}(E) = \M_P(E) \cap \M_1(E)$ the set of punctual probability measures on $E$. For a measure $\mu$ on $E$ and a suitable function $f$ (either non-negative or belonging to $L^1(\mu)$), let $\langle \mu, f \rangle = \int_E f d\mu$. Also, for $x \in E$, $\delta_x$ designates the Dirac measure at point $x$. Further, for any metric space $E$ and any integer $m$, let $\C(E,\R^m)$ be the set of continuous functions $f: E \to \R^m$. Similarly, $\C_b(E,\R^m)$ is defined as the subset of bounded functions $f \in \C(E,\R^m)$. Finally, the space $\C^1_b(E, \R^m)$ designates the set of bounded functions $f: E \to \R^m$ such that $f$ is differentiable and its differential is continuous and bounded.

\section{Presentation of the model}
\label{sec:model-presentation}

Let us begin by introducing the epidemic model of interest, in two successive steps. At first, a general model description is yielded, which corresponds to a more intuitive presentation of the model, before stating the mathematical model in detail using a measure-valued stochastic differential equation. 

\subsection{General presentation of the model}
\label{sec:general-model}

Let us start by describing the population structure of interest. Consider a population of $K$ individuals. Each individual is part of exactly one household and one workplace, which are chosen independently from one another, and independently for each individual.

More precisely, such a population structure can be obtained as described in \cite{bansayeEpidemiologicalFootprintContact2023}. Suppose that households and workplaces are of size at least one and at most $\nmax$. Consider distributions $(\pi^H_j)$ and $(\pi^W_j)$ on $\bbrackets{1}{\nmax}$. These distributions correspond to the large population limit of household and workplace size distributions, in the sense that in an infinite population, a proportion $\pi^H_j$ of households would be of size $j$, while $\pi^W$ would play a similar role for workplaces. In such an infinite population, the average household and workplace sizes, respectively $m_H$ and $m_W$, would be given for $X \in \{H,W\}$ by 
\begin{equation*}
    m_X = \sum_{j=0}^{\nmax} j \pi^X_j.
\end{equation*}

On a probability space $(\OmegaG, \P_\mathcal{G}, \mathcal{F}_\mathcal{G})$, we construct a sequence $(\mathbf{G}^K)_{K \geq 1}$ of this random population structure as follows. For $K \geq 1$, let $k \in \bbrackets{0}{K}$ be the number of individuals who are not yet member of a household. While $k > 0$, choose a size $\tilde{n}$ according to $\pi^H$, independently from the household sizes that were chosen during previous steps. The newly uncovered household is then of size $n = \min(\tilde{n},k)$, and $n$ individuals out of the $k$ remaining ones are picked uniformly at random to assemble this new household. Consequently, it remains to update $k$ to $k - n$. The process stops as soon as $k=0$, as all individuals then belong to a household. Finally, this process is repeated independently for workplaces, using $\pi^W$ instead of $\pi^H$.

\bigskip

It remains to describe the way the disease spreads in the population. The epidemic model considered here is an extension of the standard \emph{SIR} model. At each time, each individual is either \emph{susceptible} if he has never encountered the disease and may be contaminated; \emph{infected} if the individual is currently infectious, in which case he may transmit the disease to other susceptibles; or \emph{recovered}, once the infectious period is over, in which case the individual has become immune against the disease.

The disease is transmitted among individuals as follows. Within each household, each workplace and the general population, uniform mixing is assumed, but the parameterization differs slightly between the layers. Indeed, for households, we consider a \emph{one-to-one} contact rate $\lambda_H$, meaning that whenever there are $s$ susceptibles and $i$ infected within a household, the next infectious contact occurs at rate $\lambda_H s i$. Similarly, another one-to-one contact rate $\lambda_W$ is associated to workplace contacts. Within the general population, a \emph{one-to-all} contact rate $\beta_G$ is considered: when there are $s$ susceptibles and $i$ infected within a population of size $K$, infectious contacts occur at rate $\frac{\beta_G}{K}si$. Indeed, each given encounter within the general population becomes less likely when the population size $K$ increases.

Finally, infected individuals remain infectious for a period of time which is independent from $(\mathbf{G}_K)_{K \geq 1}$ and distributed according to a probability distribution $\nu$ on $\R_+$, which we assume to be absolutely continuous with regard to the Lebesgue measure. Once they recover, they are supposed to be immune against the disease from there on. In particular, if $\nu$ is an exponential distribution, this corresponds to the Markovian \emph{SIR} model. 

\subsection{The epidemic model at the level of households and workplaces}
\label{sec:model}

As we aim at investigating the large population limit of this model, we choose to enrich the population description as to obtain a closed Markov process. This corresponds to a favourable mathematical setting, as it allows us to use the associated martingale problem.
In order to do so, we will represent the population in terms of particles which are described by a type. It seems natural to consider particles which correspond to entire structures, \emph{i.e.} households and workplaces, which are characterized by their size and the number of susceptible and infected individuals they contain. Indeed, this point of view has already proven useful for deriving reproduction numbers for related models \cite{ballReproductionNumbersEpidemic2016}, as well as the epidemic growth rate of the household-workplace model \cite{pellisEpidemicGrowthRate2011}. However, this is not enough to obtain a closed system of Markovian dynamics. The problem is that each infected individual correlates the spread of the epidemic within his household and his workplace, leading to an intricate correlation network. In order to circumvent this difficulty, similarly to \cite{ballEpidemicsRandomIntersection2014}, we will thus further characterize each structure by the infectious periods of its infected members. Adopting this point of view is key, as it allows to handle both the progressive discovery of the graph as the epidemic spreads, and the correlations arising from infected individuals, without explicitly keeping in memory the discovered graph.
\bigskip

Let $\omega \in \OmegaG$. For a population of size $K \geq 1$, let $K_H$ be the number of households and $K_W$ the number of workplaces in $\mathbf{G}^K(\omega)$. While $K_H$ and $K_W$ depend on $\omega$, this dependency is not specified explicitly for readability. This will also apply to the forthcoming notations. Label the $K_H$ households in an arbitrary fashion $1, \dots, K_H$. Consider the set
\begin{equation*}
E = \left\{(n,s,\tau) \in \bbrackets{1}{\nmax} \times \bbrackets{0}{\nmax} \times \R^{\nmax}: s \leq n; \; \forall j > n-s, \tau_j = 0 \right\}.
\end{equation*}
Then for $k \in \llbracket 1, K_H \rrbracket$, the $k$-th household is characterized at time $t \geq 0$ by its type 
\begin{equation*}
    x^H_k(t) = (n^H_k, s^H_k(t), \tau^H_k(t)) \in E.
\end{equation*}

The first two components of $x^H_k$ correspond respectively to the size of the household (which is constant over time), and the number of susceptible members of the household at time $t$. The third component $\tau^H_k$ is a vector containing the \emph{remaining infectious periods} of the members of the household. Indeed, at time $t$, there are $n^H_k - s^H_k(t)$ infected or removed individuals within the household. For $j \in \llbracket 1, n^H_k - s^H_k(t) \rrbracket $, if $\tau^H_{k,j}(t) > 0$, the individual is still infectious and will remain so for $\tau^H_{k,j}(t)$ units of time. Otherwise, if $\tau^H_{k,j}(t) \leq 0$, the individual has recovered, and the recovery has occurred at time $t - |\tau^H_{k,j}(t)|$. For $j > n^H_k - s^H_k(t)$, $\tau^H_{k,j}(t)$ has no interpretation, and is set to zero for convenience in computations. In other words, the infectiousness of a previously contaminated individual with remaining infectious period $\tau$ is given by $\setind{\tau > 0}$.

Similarly, label the $K_W$ workplaces in an arbitrary order $1, \dots, K_W$. For $\ell \in \llbracket 1, K_W \rrbracket$, the $\ell$-th workplace is characterized by its type $x^W_\ell(t) = (n^W_\ell, s^W_\ell(t), \tau^W_\ell(t))$, which is defined analogously to household types.

Notice that all of these quantities depend on the population size $K$, but this dependency is omitted to simplify notations.
\bigskip

By definition, these types evolve over time. 
On the one hand, for any $X \in \{H,W\}$, for any $k \in \llbracket 1, K_X \rrbracket$ and $j \in \bbrackets{1}{\nmax}$, the $j$-th component of $\tau^X_k$ 
decreases linearly at unitary rate if it describes the remaining infectious period of an individual having contracted the disease at some previous time, and stays constant otherwise:
\begin{equation*}
\forall j \in \llbracket 1, \nmax \rrbracket, \; \frac{d}{dt} \tau^X_{k,j}(t) = -\ind_{\{j \leq n^X_k - s^X_k(t)\}}.
\end{equation*} 
Let $(e_j)_{1 \leq j \leq \nmax}$ denote the canonical basis of $\R^{\nmax}$. Then for any $0 \leq t \leq T$, and $x = (n,s,\tau) \in E$, we may define $\Psi(x, T, t)$ as the type of a structure at time $T$ given that it was in state $x$ at time $t$, supposing that no infections occurred in the meantime:
\begin{equation*}
    \Psi(x, T, t) = \left(n, s, \tau - \sum_{j=1}^{n-s}(T-t)e_j\right).
\end{equation*}

On the other hand, infections within each level of mixing also cause the modification of the types of the household and the workplace of newly contaminated individuals. More precisely, consider a contamination occurring at time $t$. Suppose that the newly infected belongs to the $k$-th household and $\ell$-th workplace. Let $\sigma$ be the realisation of a random variable of distribution $\nu$, which is drawn independently for each new infected. Then $x^H_k$ and $x^W_\ell$ jump from $x^H_k(t-)$ and $x^W_\ell(t-)$ to $\mathfrak{j}(x^H_k(t-), \sigma)$ and $\mathfrak{j}(x^W_\ell(t-), \sigma)$ respectively, where for any $x = (n,s,\tau) \in E$,
\begin{equation*}
    \mathfrak{j}(x, \sigma) = \left(n, s-1, \tau + \sigma e_{n-s+1}\right).
\end{equation*} 
It remains to describe how one identifies the household $k$ and workplace $\ell$ the newly infected belongs to. Let $S(t-)$ be the number of susceptibles in the population previously to the infection event. If it takes place within the general population, any susceptible individual is chosen with uniform probability to be contaminated. The newly infected thus belongs to the $k$-th household with probability $s^H_k(t-)/S(t-)$, and independently to the $\ell$-th workplace with probability $s^W_\ell(t-)/S(t-)$. Similarly, if the infection occurs within a household, only the workplace of the newly infected needs to be uncovered, and corresponds to the $\ell$-th workplace with the same probability as previously. Within-workplace infections are treated analogously.
\bigskip 

We are now ready to introduce the stochastic process  
$(\zeta^K_t = (\zeta^{H|K}_t, \zeta^{W|K}_t))_{t \geq 0}$
taking values in $\mathfrak{M}_{P,1} = \M_{P,1}(E) \times \M_{P,1}(E)$. $\zeta^{H|K}_t$ and $\zeta^{W|K}_t$ correspond respectively to the normalized counting measures associated to the distributions of household and workplace types at time $t$, \emph{i.e.} for any time $t \geq 0$ and $X \in \{H,W\}$,
\begin{equation*}
    \zeta^{X|K}_t = \frac{1}{K_X} \sum_{k = 1}^{K_X} \delta_{x^X_k(t)}.
\end{equation*}

Start by noticing that, as both household and workplace sizes are bounded, for $X \in \{H,W\}$, the following inequality holds:
\begin{equation*}
\frac{K}{\nmax} \leq K_X \leq K.
\end{equation*}
Thus, studying the asymptotic $K \to \infty$ amounts to $(K_H, K_W) \to (\infty, \infty)$. 

Observe that the number of infected individuals in a household of type $(n,s,\tau)$ is given by $i(\tau) = \sum_{k = 1}^{\nmax} \setind{\tau_k > 0}$. Then for any $t \geq 0$, 
\begin{equation*}
    I_H(t) = \frac{1}{K_H} \sum_{k=1}^{K_H} i(\tau^H_k(t))
\end{equation*}
corresponds to the average number of infected individuals per household at time $t$. Similarly, one may define $S_H(t)$ as the average number of susceptibles per household at time $t$, as well as the workplace-related quantities $I_W(t)$ and $S_W(t)$. Then 
\begin{equation}
\label{eq:def-SI}
    \forall X \in \{H,W\},\; S(t) = K_X S_X(t) \text{ and } I(t) = K_X I_X(t).
\end{equation}
Further, let $N_H$ be the average household size, which is constant over time and always equal to $K/K_H$. This leads to $I(t)/K = I_X(t)/N_X$, which will be of use in computations. Notice that we will need to check that Equation \eqref{eq:def-SI} is well posed, as equalities of the type $K_H S_H(t) = K_W S_W(t)$ technically need to be proven for the stochastic process formalizing the model. Notice that for $X \in \{H,W\}$, $S_X$ and $I_X$ actually depend on the population size $K$, which is omitted in notations for readability.

Finally, let us briefly emphasize that the partition of the population in households and workplaces is entirely conveyed by $\zeta^K_0  \in \mathfrak{M}_{P,1}$, as it does not vary over time. In particular, the proportions of households and workplaces of each size are supposed to correspond to those observed in $\mathbf{G}^K(\omega)$. Similarly, there are some natural constraints on $\zeta^K_0$, as $\zeta^{H|K}$ and $\zeta^{W|K}$ describe the same population, once dispatched into households, and once dispatched into workplaces. For instance, the total number of members in some epidemic state (susceptible, infected or recovered) within all households is equal to the total number of members in this state within all workplaces. Hence $K_H S_H(0) = K_W S_W(0)$ and $K_H I_H(0) = K_W I_W(0)$ almost surely. Further, at time $0$, each infected or recovered needs to have the same remaining infectious period in both his household and workplace. In other words, almost surely, 
{\footnotesize
\begin{equation}
\label{eq:hyp}
    \{\tau^H_{k,j}(0) : 1 \leq k \leq K_H,\, 1 \leq j \leq n^H_k - s^H_k(0)\} = \{\tau^W_{\ell, j}(0) : 1 \leq \ell \leq K_W,\, 1 \leq j \leq n^W_\ell - s^W_\ell(0)\}.
\end{equation}
}
Forthcoming Lemma \ref{lemme:shsw} shows that these conditions are enough to ensure that the previous characterization of $S(t)$ and $I(t)$ in terms of $S_X(t)$ and $I_X(t)$ is legitimate.

\bigskip

Before giving the proper definition of $\zeta^K$, let us introduce some necessary notations. Again, $K$-dependency is not specified explicitly in order to simplify notations.
Let
\begin{equation}
    \label{eq:UG}
    U_G = (\R_+)^3 \times \bbrackets{1}{K_H} \times \bbrackets{1}{K_W} \times \R_+,
\end{equation} 
and consider the following measure on $U_G$:
\begin{equation*}
    \mu_G(du) = \mu_G(d\bm{\theta}, dk, d\ell,d\sigma) = d\bm{\theta} \otimes \mu_{\#}(dk) \otimes \mu_{\#}(d\ell) \otimes \nu(d\sigma),
\end{equation*}
where $d\bm{\theta}$ and $\mu_{\#}$ denote the Lebesgue measure on $\R^3$ and the standard counting measure, respectively. For $t \geq 0$ and $u = (\bm{\theta}, k, \ell, \sigma) \in U_G$ where $\bm{\theta} = (\theta_1, \theta_2, \theta_3)$, let us define
\begin{equation*}
    \mathcal{I}_G(t,u) = \setind{\theta_1 \leq \frac{\beta_G}{K} S(t) I(t), \, \theta_2 \leq \frac{s^H_k(t)}{S(t)}, \, \theta_3 \leq \frac{s^W_\ell(t)}{S(t)}}.
\end{equation*}
The idea is that $\I_G$ will yield the correct rate for infection events in the general population. More precisely, the constraint on $\theta_1$ corresponds to the rate of infectious contacts at that level of mixing, while the constraints on $\theta_2$ and $\theta_3$ are related to the probability that the newly infected belongs to the $k$-th household and $\ell$-th workplace. 

Similarly, let 
\begin{equation}
    \label{eq:defU}
    U = (\R_+)^2 \times \bbrackets{1}{K_H} \times \bbrackets{1}{K_W} \times \R_+,
\end{equation} 
endowed with the measure 
\begin{equation*}
    \mu(du) = \mu(d\bm{\theta}, dk, d\ell,d\sigma) = d\bm{\theta} \otimes \mu_{\#}(dk) \otimes \mu_{\#}(d\ell) \otimes \nu(d\sigma),
\end{equation*}
where with slight abuse of notation, $d\bm{\theta}$ designates the Lebesgue measure on $\R^2$. Then for $t \geq 0$ and $u = (\bm{\theta}, k, \ell, \sigma) \in U$ where $\bm{\theta} = (\theta_1, \theta_2)$, we further introduce
\begin{equation*}
    \mathcal{I}_H(t,u) = \setind{\theta_1 \leq \lambda_H s^H_k(t)i(\tau^H_k(t)), \, \theta_2 \leq \frac{s^W_\ell(t)}{S(t)}}.
\end{equation*}
This time, the constraints on $\theta_1$ and $\theta_2$ correspond respectively to the rate of infection within the $k$-th household and the probability of the newly infected belonging to the $\ell$-th workplace. Also, for any $T \geq t \geq 0$ and $u = (\bm{\theta}, k, \ell, \sigma) \in U_G \cup U$, consider the following quantity, which will allow to keep track of the change in the household population due to an infection within the $k$-th household:
\begin{equation*}
    \Delta_H(u,T,t) = \delta_{\left(\Psi(\mathfrak{j}(x^H_k(t-),\sigma),T,t)) \right)} - \delta_{\left(\Psi(x^H_k(t-),T,t)) \right)}.
\end{equation*}
Finally, define $\mathcal{I}_W(t,u)$ and $\Delta_W(u,T,t)$ analogously: 
\begin{equation*}
    \begin{aligned}
    \mathcal{I}_W(t,u) &= \setind{\theta_1 \leq \lambda_W s^W_\ell(t)i(\tau^W_\ell(t)), \, \theta_2 \leq \frac{s^H_k(t)}{S(t)}},\\ 
    \text{and} \quad \Delta_W(u,T,t) &= \delta_{\left(\Psi(\mathfrak{j}(x^W_\ell(t-),\sigma),T,t)) \right)} - \delta_{\left(\Psi(x^W_\ell(t-),T,t)) \right)}.
    \end{aligned}
\end{equation*}
We are now ready to yield the main characterization of $\zeta^K_t$, as inspired by \cite{tranModelesParticulairesStochastiques2006}.

\begin{prop} 
\label{def:zeta}
 Define on the same probability space as $\zeta^K_0$, and independently from $\zeta^K_0$, three independent Poisson point measures $Q_Y$ on $\R_+ \times U_Y$ with intensity $dt \mu_Y(du)$, for $Y \in \{H,W,G\}$. Then $\zeta^K = (\zeta^{H|K}, \zeta^{W|K})$ is defined as the unique strong solution taking values in $\D(\R_+, \mathfrak{M}_{P,1})$ of the following equation. For $X \in \{H,W\}$ and $T \geq 0$, 
\begin{equation}
\label{eq:def-zeta}
 \zeta^{X|K}_T = \frac{1}{K_X}\left(\sum_{j=1}^{K_X} \delta_{\Psi(x^X_j(0), T, 0)} + \sum_{Y \in \{H,W,G\}} \int_0^T \int_{U_Y} \mathcal{I}_Y(t-,u) \Delta_X(u,T,t) Q_Y(dt,du)\right),
\end{equation}
where $U_G$ and $U_H = U_W = U$ are defined by Equations \eqref{eq:UG} and \eqref{eq:defU}, respectively.
\end{prop}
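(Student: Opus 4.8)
The plan is to construct $\zeta^K$ pathwise by the interlacing (thinning) method of \cite{tranModelesParticulairesStochastiques2006}, exploiting that for fixed $K$ the population is finite. It is convenient to work with the labelled family $(x^X_k(t))_{1\le k\le K_X}$, $X\in\{H,W\}$, of which $\zeta^{X|K}_t=\tfrac{1}{K_X}\sum_{k=1}^{K_X}\delta_{x^X_k(t)}$ is the normalized empirical measure; all the functionals entering the intensities, namely $S(t)$, $I(t)$, $s^X_k(t)$ and $i(\tau^X_k(t))$, are read off from this labelled configuration (with $S(t)$ well defined thanks to the forthcoming Lemma \ref{lemme:shsw}). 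The first step is to bound the rate of accepted Poisson atoms. Integrating $\mathcal{I}_Y(t,\cdot)$ against $\mu_Y$ is finite because each $\theta$-coordinate is constrained to a bounded interval, the indices $k,\ell$ range over finite sets, and $\nu$ is a probability measure; using $S(t)\le K$, $I(t)\le K$ and $s^X_k(t),\,i(\tau^X_k(t))\le \nmax$, the total accepted intensity is dominated by a finite constant $C_K$. Moreover, each accepted atom, whatever its type $H$, $W$ or $G$, corresponds to an infection and therefore decreases the total number of susceptibles by exactly one (the map $\mathfrak{j}$ sends $s$ to $s-1$ in both the relevant household and workplace), so at most $S(0)\le K$ jumps can occur over $[0,\infty)$ almost surely.

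Next I would build the trajectory by induction on the jump times. Set $T_0=0$ and suppose the configuration just after $T_{m-1}$ is known. On $[T_{m-1},T_m)$ no infection occurs, so each type follows the deterministic flow, $x^X_k(t)=\Psi(x^X_k(T_{m-1}),t,T_{m-1})$; define $T_m$ as the first time $t>T_{m-1}$ at which one of the three measures $Q_Y$ carries an atom $(t,u)$ with $\mathcal{I}_Y(t-,u)=1$, and at $T_m$ update the household $k$ and workplace $\ell$ encoded in $u$ according to $\mathfrak{j}(\cdot,\sigma)$, which is exactly the increment recorded by $\Delta_X$. This prescription is well posed: the acceptance indicator at a candidate time $t$ depends only on the configuration at $t-$, itself determined by the atoms collected before $t$; the bound $C_K$ prevents the jump times from accumulating; and one adopts the convention that the ratios $s^X_\cdot(t)/S(t)$ equal $0$ when $S(t)=0$, which is harmless since then $\beta_G S(t)I(t)/K=0$ as well and every acceptance indicator vanishes for Lebesgue-almost every $\theta$. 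Because there are almost surely at most $S(0)$ jumps, iterating produces a c\`adl\`ag process defined on all of $\R_+$.

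It then remains to check that the object produced lives in the correct space, solves the equation, and is unique. Each jump modifies a single atom of each of $\zeta^{H|K}$ and $\zeta^{W|K}$ through $\Delta_X$, so the number $K_X$ of atoms and the total mass $1$ are preserved and the process stays in $\mathfrak{M}_{P,1}=\M_{P,1}(E)\times\M_{P,1}(E)$; since each accepted atom simultaneously decrements a susceptible in both layers, the consistency $K_H S_H(t)=K_W S_W(t)$ of \eqref{eq:hyp} is propagated and $S(t),I(t)$ remain unambiguous (the detailed verification being deferred to Lemma \ref{lemme:shsw}). Rewriting the interlacing recursion as a single integral against $Q_H$, $Q_W$, $Q_G$ yields precisely \eqref{eq:def-zeta}. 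For strong uniqueness, the construction exhibits the whole trajectory as a deterministic measurable functional of $(\zeta^K_0, Q_H, Q_W, Q_G)$: given any solution of \eqref{eq:def-zeta}, its jumps must occur exactly at the accepted atoms of the $Q_Y$ (the acceptance condition being a functional of the solution's own past), so an induction over the almost surely finite sequence of jump times forces it to coincide with the constructed process between and at each jump.

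The one genuine difficulty is the self-referential character of the intensities: $\mathcal{I}_Y(t-,u)$ depends on the current configuration through $S(t)$, $s^X_k(t)$ and $i(\tau^X_k(t))$, which is what turns \eqref{eq:def-zeta} into a bona fide stochastic differential equation rather than a plain stochastic integral. The interlacing argument is what breaks this circularity, because between two consecutive jumps the configuration is an explicit deterministic function of the last post-jump state, so the intensity is known and the next jump is obtained by ordinary Poisson thinning; together with the non-explosion guaranteed by the finite number of jumps, this removes any need for a Lipschitz or Gronwall fixed-point argument. The only points requiring care are the degeneracy at $S(t)=0$, handled by the convention above, and the bookkeeping ensuring that the labelled dynamics consistently projects onto the measure-valued equation \eqref{eq:def-zeta}.
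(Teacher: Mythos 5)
Your proposal is correct and follows essentially the same route as the paper: existence by a pathwise interlacing/rejection-sampling construction over the successive jump times, non-explosion from a finite $O(K)$ bound on the accepted intensity, and strong uniqueness by induction on the jump times showing the solution is a deterministic functional of $(\zeta^K_0, Q_H, Q_W, Q_G)$. Your additional observation that each accepted atom decrements the susceptible count, so that there are at most $S(0)\le K$ jumps in total, is a harmless strengthening of the paper's argument that the jump times are dominated by those of a Poisson process of intensity $\nmax(\lambda_H\nmax+\lambda_W\nmax+\beta_G)K$ and hence tend to infinity.
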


The idea behind Equation \eqref{eq:def-zeta} goes as follows. Let us focus for example on the distribution $\zeta^{H|K}_T$ of household types at time $T$. Each household's type contributes to the distribution at uniform weight $1/K_H$. If no infection event occurs between times $0$ and $T$, then the state of the $k$-th household at time $T$ is given by $\Psi(x^H_k(0),T,0)$. However, suppose now that before time $T$, at least one initially susceptible member of the $k$-th household is infected, and let $t$ be the first time at which such an event occurs. Then $x^H_k(t) = \mathfrak{j}(x^H_k(t-),\sigma)$ where $\sigma$ is distributed according to $\nu$. If no other infections affect this household up to time $T$, it will be in state $\Psi(\mathfrak{j}(x^H_k(t-),\sigma),T,t)$ instead of $\Psi(x^H_k(0),T,0) = \Psi(x^H_k(t-),T,t)$. This reasoning is reflected in $\Delta_H$, and can be iterated over the whole of $[0,T]$. Finally, the terms $\I_Y$ for $Y \in \{G,H,W\}$ assure that all infection events occur at the corresponding rates.

\begin{proof}
The proof uses classical arguments, which will only be outlined here. Start by establishing existence of $\zeta^{K}$. Consider the sequence $(T_n)_{n \geq 0}$ of successive jump times of $\zeta^{K}$, where we define $T_0 = 0$. Then using a method similar to rejection sampling, $(T_n)_{n \geq 0}$ can be obtained as a subsequence of the jump times of a Poisson process with intensity $\nmax (\lambda_H \nmax + \lambda_W \nmax + \beta_G)K$, whose only limiting value is $+\infty$. Thus $\lim_{n \to +\infty} T_n = +\infty$ almost surely, ensuring that $\zeta^{K}$ takes values in $\D(\R_+, \mathfrak{M}_{P,1}(E))$.     

Finally, uniqueness is obtained by an induction argument which proves that for any $n \geq 0$, $(T_n, \zeta^{K}_{T_n})$ is uniquely determined by $(\zeta^{K}_0, (Q_Y)_{ Y \in \mathcal{S}})$ where $\mathcal{S} = \{G,H,W\}$. This obviously is true for $n=0$. The induction step relies on the observation that $T_{n+1}$ is uniquely determined by $(T_n, \zeta^{K}_{T_n}, (Q_Y)_{ Y \in \mathcal{S}})$ and $\zeta^{K}_{T_{n+1}}$ by $(T_{n+1}, T_n, \zeta^{K}_{T_n}, (Q_Y)_{ Y \in \mathcal{S}})$. The induction hypothesis allows to conclude.
\end{proof}

Let us briefly show that it follows from Proposition \ref{def:zeta} that Equation \eqref{eq:def-SI} is well posed.

\begin{lemma}
    \label{lemme:shsw}
    Suppose that almost surely, $K_H S_H(0) = K_W S_W(0)$ and Equation \eqref{eq:hyp} holds. Then for any $t \geq 0$, $K_H S_H(t) = K_W S_W(t)$ and $K_H I_H(t) = K_W I_W(t)$, almost surely.
\end{lemma}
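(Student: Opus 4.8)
The plan is to track how the two conserved quantities $K_H S_H(t) = S(t)$ and $K_W S_W(t)$ evolve jointly and show they never separate, exploiting the fact that every infection event decrements the susceptible count by exactly one in \emph{both} the household population and the workplace population. Since $\zeta^K$ is a pure-jump process whose jumps are driven by the Poisson point measures $(Q_Y)_{Y \in \{H,W,G\}}$, it suffices to verify that the desired equalities are preserved across a single jump and that they hold at time zero (which is assumed). By the almost-sure finiteness of jumps on compact intervals established in the proof of Proposition \ref{def:zeta}, an induction over the successive jump times $(T_n)_{n \geq 0}$ then yields the claim for all $t \geq 0$.

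First I would make the quantities explicit as linear functionals of $\zeta^K$. Writing $S(t) = K_H S_H(t) = K_H \langle \zeta^{H|K}_t, s \rangle$ where $s(n,s,\tau) = s$, and similarly $K_W S_W(t) = K_W \langle \zeta^{W|K}_t, s \rangle$, I would read off from Equation \eqref{eq:def-zeta} the effect of each infection event. The key observation is that the jump kernel $\Delta_X(u,T,t)$ corresponds to applying $\mathfrak{j}(x,\sigma) = (n, s-1, \tau + \sigma e_{n-s+1})$ to exactly one structure of type $X$: a general-population event (driven by $Q_G$, with indicator $\I_G$) simultaneously decrements $s^H_k$ by one in the chosen household \emph{and} $s^W_\ell$ by one in the chosen workplace, so both $K_H S_H$ and $K_W S_W$ drop by exactly $1$; a within-household event (driven by $Q_H$, indicator $\I_H$) decrements $s^H_k$ and, through the $\theta_2 \leq s^W_\ell/S$ constraint and the associated $\Delta_W$ term, also decrements the matching $s^W_\ell$; and symmetrically for within-workplace events. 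In every case the two susceptible totals change by the identical increment $-1$, so their difference $K_H S_H(t) - K_W S_W(t)$ is invariant under each jump; being zero at $t=0$, it remains zero. The same bookkeeping applied to $i(\tau) = \sum_k \setind{\tau_k > 0}$ gives the infected statement: each infection creates exactly one new positive remaining-period entry in both the affected household and the affected workplace (since $\sigma > 0$ almost surely, as $\nu$ is absolutely continuous), while the linear decay $\frac{d}{dt}\tau^X_{k,j} = -\ind_{\{j \le n^X_k - s^X_k\}}$ changes $i(\tau)$ only at recovery times, and a recovery of a given individual occurs at the same instant in his household and his workplace precisely because of the matching-period condition \eqref{eq:hyp}.

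I would organize the induction so that the induction hypothesis at time $T_n$ asserts not merely the two scalar equalities but the stronger statement that the multiset of remaining infectious periods carried by the household population coincides with that carried by the workplace population, i.e. the time-$T_n$ analogue of \eqref{eq:hyp}. This multiset identity is the natural invariant: it implies the $I$-equality immediately (both sides count the positive entries of the same multiset) and, because susceptible counts are determined by how many individuals have ever been infected, it also controls the $S$-equality. The inductive step then reduces to checking that a single infection updates both multisets by inserting the same value $\sigma$ (the newly drawn infectious period assigned to the one new infected, who lies in a unique household and a unique workplace), and that the deterministic flow $\Psi$ shifts all entries of both multisets identically. The main obstacle is precisely this bookkeeping in the measure-valued formalism: one must argue carefully that the household and workplace selected at each infection genuinely correspond to the \emph{same} physical individual, so that the same $\sigma$ is inserted in both populations — this is exactly the correlation the model was designed to encode through the shared $\sigma$ in $\mathfrak{j}$ and the joint selection probabilities $s^H_k/S$ and $s^W_\ell/S$ — and that the asymmetric forms of $\I_G$, $\I_H$, $\I_W$ do not break the symmetry of the resulting update. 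Once the multiset invariant is shown to persist across one jump, propagating it to all $t$ via the jump-time induction and the almost-sure escape $T_n \to \infty$ is routine.
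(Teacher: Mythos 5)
Your proof is correct, and it rests on exactly the right invariants, but it is organized differently from the paper's argument. You proceed pathwise: induction over the jump times $(T_n)_{n \geq 0}$, with a strengthened induction hypothesis asserting that the multiset of remaining infectious periods in the household population equals that in the workplace population — i.e.\ that Equation \eqref{eq:hyp} propagates to all times — from which both equalities follow (the $I$-equality by counting positive entries, the $S$-equality because each jump decrements both susceptible totals by one). The paper instead avoids any induction: it fixes a horizon $T$, pairs Equation \eqref{eq:def-zeta} against the test functions $\bfs$ and $\bfi$, and reads off that every atom $(t,u)$ of every Poisson measure contributes the \emph{identical} increment to both components — namely $\angles{\Delta_X(u,T,t)}{\bfs} = -1$ and $\angles{\Delta_X(u,T,t)}{\bfi} = \setind{\sigma > T-t}$ for $X \in \{H,W\}$ — while the initial terms coincide by the hypotheses ($\bfs \circ \Psi = \bfs$ for the susceptibles, and \eqref{eq:hyp} for the infected). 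The point you flag as the "main obstacle" (that the selected household and workplace receive the same $\sigma$) is in fact immediate in the formalism: both $\Delta_H(u,T,t)$ and $\Delta_W(u,T,t)$ are built from the same coordinate $\sigma$ of the same atom $u$, so no identification of a physical individual is needed. What your route buys is a stronger conclusion — the multiset identity at all times, which is the time-$t$ analogue of \eqref{eq:hyp} and makes the mechanism transparent; what the paper's route buys is brevity, since \eqref{eq:def-zeta} is already written as a horizon-$T$ representation and the quantity $\setind{\sigma > T-t}$ does the recovery bookkeeping (simultaneous recoveries in household and workplace) automatically, with no need for the a.s.\ escape $T_n \to \infty$ beyond what Proposition \ref{def:zeta} already provides.
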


\begin{proof}
    Let $T \geq 0$, and for any $x = (n,s,\tau) \in E$, let $\bfn(x) = n$, $\bfs(x) = s$ and $\bfi(x) = i(\tau)$.
    Start by focusing on $S_X(T) = \angles{\zeta^{X|K}_T}{\bfs}$ for $X \in \{H,W\}$. It follows from Equation \eqref{eq:def-zeta} that 
    {\small
    \begin{equation*}
        K_X S_X(T) = \sum_{j=1}^{K_X} \bfs(\Psi(x^X_j(0), T, 0)) + \sum_{Y \in \{H,W,G\}} \int_0^T \int_{U_Y} \mathcal{I}_Y(t-,u) \angles{\Delta_X(u,T,t)}{\bfs} Q_Y(dt,du).
    \end{equation*}
    }
    Notice that on the one hand, for any $x \in E$ and $0 \leq t \leq T$, $\bfs(\Psi(x, T, t)) = \bfs(x)$. Hence the first term of the right-hand side equals $K_X S_X(0)$, and for any $u = (\bm{\theta}, k, \ell, \sigma)$,
    \begin{equation*}
        \angles{\Delta_H(u,T,t)}{\bfs} = \bfs(\Psi(\mathfrak{j}(x^H_k(t-),\sigma),T,t)) - \bfs(\Psi(x^H_k(t-),T,t)) = -1.
    \end{equation*}
    The analogous computation yields $\angles{\Delta_W(u,T,t)}{\bfs} = -1$. Thus $K_H S_H(T) = K_W S_W(T)$ almost surely.

    Let us now turn to $I_X(T) = \angles{\zeta^{X|K}_T}{\bfi}$. This time, for any $u = (\bm{\theta}, k, \ell, \sigma)$ and $0 \leq t \leq T$, it holds that $\angles{\Delta_X(u,T,t)}{\bfi} = \setind{\sigma > (T-t)}$. Finally, Equation \eqref{eq:hyp} ensures that 
    \begin{equation*}
        \sum_{k=1}^{K_H} \bfi(\Psi(x^H_k(0), T, 0)) = \sum_{k=1}^{K_H} \sum_{j=1}^{n^H_k - s^K_k(0)} \setind{\tau^H_{k,j}(0) > T} = \sum_{\ell=1}^{K_W} \bfi(\Psi(x^W_\ell(0), T, 0)).
    \end{equation*}
    The conclusion follows as previously from Equation \eqref{eq:def-zeta}.
\end{proof}

\section{Main results}
 
In this section, we are going to present our main results on the convergence of $(\zeta^K)_{K \geq 1}$ in the Skorokhod space $\D\left(\R_+, \M_1(E)\right)^2$. For $X \in \{H,W\}$, let $\overline{X}$ be the complementary structure type, \emph{i.e.} $\overline{X} = W$ if $X = H$ and vice-versa.

As we are interested in studying the limit $K \to \infty$, an important ingredient will be the asymptotic behavior of the sequence of random graphs on which the epidemic spreads. Let $\pi^{H|K}$ and $\pi^{W|K}$ be the household and workplace size distributions observed in $\mathbf{G}^K$. The law of large numbers ensures that $(\pi^{H|K}, \pi^{W|K})_{K \geq 1}$ converges $\P_\mathcal{G}$-almost everywhere to $(\pi^H, \pi^W)$. We hence define 
\begin{equation*}
    \OmegaCvgce = \{\omega \in \OmegaG : (\pi^{H|K}(\omega), \pi^{W|K}(\omega)) \xrightarrow[K \to \infty]{}  (\pi^H, \pi^W)\},
\end{equation*}
and our main results will hold for $\omega \in \OmegaCvgce$.

Further, the following assumption on the sequence of initial conditions $(\zeta^K_0)_{K \geq 1}$ will be required from now on.
\begin{hyp}
\label{hyp:zetaK0}
    For any $X \in \{H,W\}$ and $T \geq 0$, suppose that:
    \begin{enumerate}[label=(\roman*)]
        \item 
        \begin{equation*} 
            \lim_{N \to \infty} \sup_{K \geq 1} \E\left[ \sup_{0 \leq t \leq T} \frac{1}{K_X} \sum_{k=1}^{K_X} \sum_{i=1}^{\nmax} \setind{n^X_k - s^X_k(0) \geq i, \; | \tau^X_{k,i}(0) - t| \geq N} \right] = 0.
        \end{equation*}

        \item For any $c \in \R$, for any $i \in \bbrackets{1}{\nmax}$,
        \begin{equation*}
            \lim_{\epsilon \to 0} \sup_{K \geq 1} \E\left[ \frac{1}{K_X} \sum_{k=1}^{K_X} \setind{n^X_k - s^X_k(0) \geq i, \; | (\tau^X_{k,i}(0) - T) - c | \leq \epsilon} \right] = 0.
        \end{equation*}
        
    \end{enumerate}
\end{hyp}

Briefly, the first assumption allows to control the impact of the initial condition on the queues of the distribution of remaining infectious periods at each time, while the second condition is related to aspects of absolute continuity. These conditions are for instance satisfied if for any $K \geq 1$, at time $0$, the remaining infectious periods of infected individuals are i.i.d. of law $\nu$, while those of recovered individuals are set to be equal to zero. Notice that this choice for recovered individuals does not represent a loss of generality, as it does not affect the epidemic spread and initially recovered individuals will remain recognizable at any time $T$ as the only ones whose remaining infectious periods equal $-T$. 

\subsection{Large population approximation of \texorpdfstring{$(\zeta^K)_{K \geq 1}$}{the individual-based process}}
\label{sec:results}

 For any $f \in  \C^1_b(\R_+ \times E, \R)$, let $f_t(x) = f(t,x)$ and $f^\I_t(x) = \angles{\nu}{f_t(\mathfrak{j}(x,\cdot))}$ for every $(t,x) \in \R_+ \times E$. Consider the differential operator $\A$ defined as follows. For any $x = (n,s,\tau) \in E$,  
\begin{equation*}
\A f_t(x) = \partial_t f(t,x) - \sum_{k=1}^{n-s} \partial_{\tau_k} f(t,x). 
\end{equation*}
Also, for any $x = (n,s,\tau) \in E$, let $\bfn(x) = n$, $\bfs(x) = s$ and $\bfi(x) = i(\tau)$ be the functions which to a structure in state $x$ associate the corresponding structure size, number of susceptible and number of infected members, respectively. For instance, for any $X \in \{H,W\}$ the average rate of within-structure infections at time $t$ is given by 
\begin{equation*}
    \lambda_X \angles{\zeta^{X|K}_t}{\bfs \bfi} = \frac{\lambda_X}{K_X} \sum_{k=1}^{K_X} \bfs(x^X_k(t)) \bfi(x^X_k(t)).
\end{equation*}
Notice also that as mentioned previously, the average size of structures is constant over time, hence $\angles{\zeta^{X|K}_t}{\bfn} = \angles{\zeta^{X|K}_0}{\bfn}$ for all $t \geq 0$.
Finally, let $\mathfrak{M}_1 = \M_1(E)^2$. We are now ready to state our first result, whose proof is postponed to Section \ref{sec:proof-cvgce}. 

\begin{theorem} 
\label{thm:cvgce}
Let $\omega \in \OmegaCvgce$. Suppose that $\left( \zeta^K_0 \right)_{K \geq 1}$ satisfies Assumption \ref{hyp:zetaK0} and converges in law to $\eta_0 \in \mathfrak{M}_1$. Then $(\zeta^K)_{K \geq 1}$ converges in $\D\left(\R_+, \M_1(E)\right)^2$ to $\eta = (\eta^H, \eta^W)$ defined as the unique solution of the following system of Equations \eqref{eq:defeta}. For any $f \in \C^1_b(\R_+ \times E, \R)$, for any $T \geq 0$,

\begin{equation}
\label{eq:defeta}
\begin{aligned}
    \angles{\eta^X_T}{f_T} &= \angles{\eta^X_0}{f_0} + \int_0^T \angles{\eta^X_t}{\A f_t} dt + \lambda_X \int_0^T \angles{\eta^X_t}{\bfs \bfi(f^\I_t - f_t)} dt \\
    &+ \lambda_{\overline{X}} \int_0^T \frac{\angles{\eta^{\overline{X}}_t}{\bfs \bfi}}{\angles{\eta^{\overline{X}}_t}{\bfs}} \angles{\eta^X_t}{ \bfs (f^\I_t - f_t)} dt + \beta_G \int_0^T \frac{\angles{\eta^H_t}{\bfi}}{\angles{\eta^H_0}{\bfn}} \angles{\eta^X_t}{ \bfs (f^\I_t - f_t)} dt. 
\end{aligned}
\end{equation}
\end{theorem}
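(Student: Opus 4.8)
The plan is to follow the classical martingale-problem route for measure-valued population processes, in the spirit of \cite{tranModelesParticulairesStochastiques2006}: establish tightness of $(\zeta^K)_{K \geq 1}$ in $\D(\R_+, \M_1(E))^2$, characterize every subsequential limit as a solution of \eqref{eq:defeta}, and prove uniqueness of that solution, so that the whole sequence converges. The starting point is to turn the strong formulation \eqref{eq:def-zeta} into a semimartingale decomposition: for $f \in \C^1_b(\R_+ \times E, \R)$, compensating each $Q_Y$ by $dt\, \mu_Y(du)$ yields
\[
\angles{\zeta^{X|K}_T}{f_T} = \angles{\zeta^{X|K}_0}{f_0} + \int_0^T \angles{\zeta^{X|K}_t}{\A f_t}\, dt + D^{X|K}_T + M^{X|K}_T,
\]
where the transport term comes from differentiating $t \mapsto f_t(\Psi(x,t,s))$ along the deterministic flow (this is exactly the action of $\A$), the finite-variation part $D^{X|K}$ gathers the three infection channels $Y \in \{H,W,G\}$ (each producing the increment $f^\I_t - f_t$ through the jump map $\mathfrak{j}$), and $M^{X|K}$ is a local martingale.

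The delicate bookkeeping is to show that $D^{X|K}_T$ equals the last three integrals of \eqref{eq:defeta} with $\zeta^{X|K}$ in place of $\eta^X$. Integrating $\I_Y$ against $\mu_Y$ and using the definitions of $S(t)$, $I(t)$ and the selection probabilities $s^{\overline X}_\ell / S$, one recovers the within-structure term directly, while the cross term carries a spurious factor $K_{\overline X}/K_X$; here I would invoke Lemma~\ref{lemme:shsw}, which gives $K_H \angles{\zeta^{H|K}_t}{\bfs} = K_W \angles{\zeta^{W|K}_t}{\bfs}$ and converts this factor into the ratio $\angles{\zeta^{\overline X}_t}{\bfs\bfi}/\angles{\zeta^{\overline X}_t}{\bfs}$ of \eqref{eq:defeta}. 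The global channel uses instead that $\angles{\zeta^{X|K}_t}{\bfn} = K/K_X$ is constant, so that $I(t)/K = \angles{\zeta^{H|K}_t}{\bfi}/\angles{\zeta^{H|K}_0}{\bfn}$. For the martingale, since jumps have size $O(1/K_X)$ and occur at rate $O(K_X)$ (infection rates per structure are bounded by $\lambda_X \nmax^2$, etc.), its predictable bracket is $O(1/K_X)$ uniformly on $[0,T]$, whence $\E[\sup_{t \leq T} |M^{X|K}_t|^2] \to 0$ by Doob's inequality.

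Next I would prove tightness. Because $E$ is not compact (the remaining-period coordinates range over $\R$), the state space $\M_1(E)$ is not compact and a compact-containment condition is needed; this is precisely the role of Assumption~\ref{hyp:zetaK0}(i), which uniformly controls the tails of the distribution of remaining infectious periods, uniformly in $t \leq T$. Combining compact containment with the Aldous--Rebolledo criterion applied to the real semimartingales $\angles{\zeta^{X|K}}{f}$ (whose drift has a modulus of continuity controlled by the bounded rates and whose bracket is $O(1/K)$), a Roelly-type criterion then lifts tightness to $\D(\R_+, \M_1(E))^2$.

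The main obstacle is the identification of the limit, and it stems from the fact that $\bfi(x) = i(\tau) = \sum_k \setind{\tau_k > 0}$ is \emph{discontinuous} on $\{\tau_k = 0\}$, so the functionals $\angles{\cdot}{\bfs\bfi}$, $\angles{\cdot}{\bfi}$ appearing in \eqref{eq:defeta} are not continuous for the weak topology. To pass the weak limit through them I must show that the limit $\eta^X_t$ charges no mass on the discontinuity set; this is exactly what Assumption~\ref{hyp:zetaK0}(ii) secures, since forbidding concentration of the shifted initial periods near any value $c$ prevents atoms from forming along the flow $\Psi$, giving $\eta^X_t(\{\exists k,\ \tau_k = 0\}) = 0$. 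The ratio terms further require $\angles{\eta^{\overline X}_t}{\bfs}$ to stay bounded away from zero; as susceptible mass decreases at a bounded rate, one obtains $\angles{\eta^{\overline X}_t}{\bfs} \geq \angles{\eta^{\overline X}_0}{\bfs}\, \e^{-cT} > 0$ on $[0,T]$, which makes the coefficient maps continuous and bounded. Finally, uniqueness for \eqref{eq:defeta} would follow from a Gronwall argument: the drift depends on $\eta$ only through the moments $\angles{\eta^X_t}{\bfs}$, $\angles{\eta^X_t}{\bfs\bfi}$, $\angles{\eta^X_t}{\bfi}$, on which the coefficients are bounded and locally Lipschitz thanks to the lower bound on the susceptible mass; comparing two solutions in total variation and closing the estimate yields $\eta = \tilde\eta$, so that tightness plus a unique limit point gives convergence of the entire sequence.
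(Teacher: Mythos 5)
Your overall architecture is essentially the paper's: semimartingale decomposition of $\angles{\zeta^{X|K}}{f_t}$ with a martingale whose bracket is $O(1/K)$, tightness via Aldous--Rebolledo and a Roelly-type criterion with compact containment supplied by Assumption \ref{hyp:zetaK0}(i), identification of limit points with the discontinuity of $\bfi$ handled through Assumption \ref{hyp:zetaK0}(ii), and finally uniqueness to upgrade subsequential convergence to convergence of the whole sequence. However, there is a genuine gap in the uniqueness step. You propose to compare two solutions in total variation and close a Gronwall estimate, on the grounds that the nonlinear coefficients are Lipschitz in the moments $\angles{\eta_t}{\bfs}$, $\angles{\eta_t}{\bfs\bfi}$, $\angles{\eta_t}{\bfi}$. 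But the weak formulation \eqref{eq:defeta} also contains the transport term $\int_0^T \angles{\eta^X_t}{\A f_t}\,dt$, and $\A f_t$ involves derivatives of $f$: when you take the supremum over terminal test functions $g$ with $\inftynorm{g} \leq 1$ to produce $\| \eta^X_T - \overline{\eta}^X_T \|_{TV}$, the quantity $\inftynorm{\A f_t}$ is not controlled by $\inftynorm{g}$ (and does not even exist for merely continuous $g$), so the estimate cannot be closed. The paper's Proposition \ref{prop:uniqueness} resolves this with a device your sketch lacks: for each terminal $g$ it uses the time-reversed test functions $f_t(x) = g(\Psi(x,T,t))$, for which $\A f_t \equiv 0$, so the transport term vanishes identically and every remaining term is bounded by $\inftynorm{g}$; Lemma \ref{lem:density} (mollification) then extends the resulting bound from $\C^1_b$ to $\C_b$ test functions, giving the TV estimate. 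Without this duality/mild reformulation, the uniqueness argument as you state it fails.

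A secondary issue concerns the ratio $\angles{\eta^{\overline{X}}_t}{\bfs\bfi}/\angles{\eta^{\overline{X}}_t}{\bfs}$ in the identification step. Your lower bound $\angles{\eta^{\overline{X}}_t}{\bfs} \geq \angles{\eta^{\overline{X}}_0}{\bfs}\,\e^{-cT}$ is derived from the limiting dynamics, which is precisely what is being established at that stage, so the argument is circular as written (and void when $\angles{\eta^{\overline{X}}_0}{\bfs} = 0$, a case the theorem's hypotheses do not exclude). It can be repaired by a stochastic-domination argument at the prelimit level, but the paper avoids the issue entirely: since $\bfs\bfi \leq \nmax\,\bfs$ pointwise, the pairs (numerator, denominator) lie almost surely in a set $D = \{(x,y) : |x| \leq c\,y^2\}$ on which $(x,y) \mapsto (x/y)\setind{y \neq 0}$ is continuous, so the ratio passes to the limit with no lower bound on the susceptible mass. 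Relatedly, in the no-atoms argument you attribute everything to Assumption \ref{hyp:zetaK0}(ii); the absolute continuity of $\nu$ is also needed there, to rule out atoms created by the remaining infectious periods of individuals infected after time zero.
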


This measure-valued equation can further be related to a system of PDEs. Indeed, it is possible to establish an absolute continuity result for the marginals of $\eta^X$ conditioned on the structure's size and number of susceptible members. The associated densities can be shown to satisfy, in the sense of distributions, a system of differential equations related to non-linear nonlocal transport equations. 
\bigskip

From now on, let us assume that $\nu$ is the exponential distribution of parameter $\gamma$. As we shall see, it then is possible to deduce from Theorem \ref{thm:cvgce} that the proportion of susceptible and infected individuals in the population converges to the solution of a dynamical system, when the size of the population grows large. 

Let $s(t)$ and $i(t)$ be the proportions of susceptible and infectious individuals, respectively, in the population at time $t$ according to distribution $\eta_t$.  Further introduce the set
\begin{equation*}
\config = \left\{ (n-i,i): 2 \leq n \leq \nmax, 0 \leq i \leq n - 1 \right\}.
\end{equation*}
For $(S,I) \in \config$, let $n^H_{S,I}(t)$ be the proportion of households containing $S$ susceptible and $I$ infected individuals at time $t$, according to distribution $\eta^H_t$. Define $n^W_{S,I}(t)$ analogously for workplaces. Finally, consider
\begin{equation*}
\tau_G(t) = \beta_G i(t),
\text{ and } 
\tau_X(t) = \frac{\lambda_X}{m_X} \sum_{(S,I) \in \config} SI \; n^X_{S,I}(t) \text{ for } X \in \{H,W\}.
\end{equation*}
We assume that at time $0$, a fraction $\varepsilon$ of uniformly chosen individuals are infected amidst an otherwise susceptible population. Furthermore, at time $0$, the remaining infectious period of each infected individual is supposed to be distributed according to $\nu$, independently from one another. Let us emphasize here that actually, only this second assumption is crucial for the results to hold, while the original distribution of infected individuals does not need to be uniform (in which case forthcoming Equations \eqref{eq:etastar} and \eqref{eq:initialcdts} need to be adapted). We have chosen this particular initial condition as it has been previously considered in the literature, and refer to the Discussion for further comments.

In practice, this setting corresponds to the following probability distribution $\eta_{0,\varepsilon} = (\eta^H_{0,\varepsilon}, \eta^W_{0,\varepsilon}) \in \mathfrak{M}_1$ characterized for $X \in \{H,W\}$ as follows. For any $n \in \bbrackets{1}{\nmax}$ and $s \in \bbrackets{0}{n}$:
\begin{equation}
\label{eq:etastar}
\eta^X_{0,\varepsilon}(n, s, d\tau) =  \pi^X_n \binom{n}{s} (1 - \varepsilon)^s \varepsilon^{n-s} \left(\nu^{\otimes (n-s)} \otimes \delta_{0}^{\otimes\left(\nmax - n + s)\right)}\right)(d\tau).
\end{equation}
It then is possible to describe the epidemic dynamics by a finite, closed set of ordinary differential equations, as shown in the following result whose proof is postponed to Section \ref{sec:proof-sysdyn}.

\begin{theorem}
\label{thm:cvgce-sysdyn}
Let $\varepsilon > 0$ and suppose that $\eta$ satisfies Equation \eqref{eq:defeta} with $\eta_0 = \eta_{0, \varepsilon}$. 
Then the functions $(s,\, i,\, n^X_{S,I}: X \in \{H,W\}, (S,I) \in \config)$ are characterized as being the unique solution of the following dynamical system: for any $t \geq 0$, $X \in \{H,W\}$ and $(S,I) \in \config$,

\begin{subequations}
\label{eq:sysdyn}
\begin{align}
    \frac{d}{dt} s(t) &= - (\tau_H(t) + \tau_W(t) + \tau_G(t) s(t)), \label{eq:subeq-sysdyn-si} \\
    \frac{d}{dt} i(t) &=  - \frac{d}{dt} s(t) - \gamma i(t), \label{eq:subeq-sysdyn-i} \\
    \frac{d}{dt} n^X_{S,I}(t) &= - \left(\lambda_X SI + \tau_{\overline{X}}(t) \frac{S}{s(t)} + \tau_G(t) S + \gamma I\right) n^X_{S,I}(t) \label{eq:subeq-sysdyn-structures} \\
     \; +& \gamma (I+1) n^X_{S,I+1}(t) \setind{S + I < \nmax} \notag \\
     \; +& \left(\lambda_X (S+1)(I-1) + \tau_{\overline{X}}(t) \frac{S+1}{s(t)} + \tau_G(t) (S+1)  \right) n^X_{S+1, I-1}(t) \setind{I \geq 1} \notag, 
\end{align}
\end{subequations}
with initial conditions given by 
\begin{equation}
\label{eq:initialcdts}
    s(0) = 1 - \varepsilon; \;\; 
    i(0) = \varepsilon; \;\; n^X_{S,I}(0) = \binom{S+I}{I} \pi^X_{S+I} (1 - \varepsilon)^S \varepsilon^I. 
\end{equation}

\end{theorem}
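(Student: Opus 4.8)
The plan is to derive the finite system by testing the measure-valued equation \eqref{eq:defeta} against three families of observables, and then to verify that the resulting functions are the unique solution of \eqref{eq:sysdyn}--\eqref{eq:initialcdts}. Writing $m_X = \angles{\eta^X_0}{\bfn}$ for the time-invariant mean size of structures of type $X$, the quantities of interest are $s(t) = \angles{\eta^X_t}{\bfs}/m_X$, $i(t) = \angles{\eta^X_t}{\bfi}/m_X$ and $n^X_{S,I}(t) = \angles{\eta^X_t}{\chi_{S,I}}$ with $\chi_{S,I} = \setind{\bfs = S,\, \bfi = I}$, so the natural test functions are $\bfs$, $\bfi$ and $\chi_{S,I}$. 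Only $\bfs$ genuinely belongs to $\C^1_b$, and it yields \eqref{eq:subeq-sysdyn-si} at once: since $\bfs$ depends neither on $t$ nor on $\tau$ we have $\A \bfs = 0$, while $\mathfrak{j}$ lowers $\bfs$ by one so the jump factor is $f^\I_t - f_t = -1$. Substituting into \eqref{eq:defeta}, using $\lambda_X \angles{\eta^X_t}{\bfs\bfi} = m_X \tau_X(t)$, $\angles{\eta^{\overline X}_t}{\bfs} = m_{\overline X} s(t)$ and $\beta_G \angles{\eta^H_t}{\bfi}/m_H = \tau_G(t)$, and dividing by $m_X$, gives $\frac{d}{dt}s = -(\tau_H + \tau_W + \tau_G s)$. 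The identity obtained is the same for $X = H$ and $X = W$, which together with $s(0) = 1-\varepsilon$ shows that $s$ is well defined independently of the layer.

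The heart of the proof concerns $\bfi$ and $\chi_{S,I}$, which depend on $\tau$ only through $i(\tau) = \sum_k \setind{\tau_k > 0}$ and therefore fail to be $\C^1_b$. All the difficulty sits in the transport term $\angles{\eta^X_t}{\A f}$: formally $\A \bfi = -\sum_{k=1}^{n-s}\delta_0(\tau_k)$, so $\angles{\eta^X_t}{\A \bfi}$ is the instantaneous rate at which remaining infectious periods cross zero, i.e. the recovery flux. The decisive fact, specific to $\nu = \mathrm{Exp}(\gamma)$, is that this flux equals $\gamma\angles{\eta^X_t}{\bfi}$, and, more precisely, that the flux carried by structures in state $(S,I)$ is $\gamma I\, n^X_{S,I}(t)$. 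I would obtain this by showing that, for every $t$, the law under $\eta^X_t$ of the strictly positive components of $\tau$ is, conditionally on the infectious slots, a product of $\mathrm{Exp}(\gamma)$ laws. This invariance rests on three facts: the initial law \eqref{eq:etastar} is product-exponential on the infected coordinates; each infection injects a fresh $\mathrm{Exp}(\gamma)$ period through $\mathfrak{j}$ and $f^\I_t$; and the deterministic ageing $\Psi$ preserves $\mathrm{Exp}(\gamma)$ on the survival event, by memorylessness. To make this rigorous I would approximate $\setind{\cdot > 0}$ by $h_\epsilon \in \C^1_b$ interpolating from $0$ to $1$ over $[0,\epsilon]$, test \eqref{eq:defeta} against $\phi(n,s)\,h_\epsilon(\tau_k)$, and let $\epsilon \to 0$, invoking the absolute continuity of the marginals of $\eta^X$ announced after Theorem \ref{thm:cvgce}: as $h_\epsilon' \to \delta_0$ and the infectious-period density equals $\gamma$ at $0^+$ times the mass of the corresponding slot, $\int h_\epsilon'(\tau_k)\,\eta^X_t(d\tau) \to \gamma\,\P(\text{slot } k \text{ infectious})$, and summation yields the stated flux. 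This flux computation is the main obstacle: it is where the exponential assumption is genuinely used, and where the lack of regularity of the observables must be controlled.

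Granting the flux identity, the two remaining equations follow by collecting terms in \eqref{eq:defeta}. For $f = \bfi$ one has $f^\I_t - f_t = +1$ (as $\sigma > 0$ almost surely), so the three infection terms sum to exactly $-\frac{d}{dt}\angles{\eta^X_t}{\bfs}$, while $\angles{\eta^X_t}{\A\bfi} = -\gamma\angles{\eta^X_t}{\bfi}$; dividing by $m_X$ gives \eqref{eq:subeq-sysdyn-i}. For $f = \chi_{S,I}$ one checks $f^\I_t = \chi_{S+1,I-1}$, because $\mathfrak{j}$ maps $(\bfs,\bfi)$ to $(\bfs-1,\bfi+1)$; then $\angles{\eta^X_t}{\bfs\bfi(f^\I_t - f_t)}$ and $\angles{\eta^X_t}{\bfs(f^\I_t - f_t)}$ produce the $\lambda_X(S+1)(I-1)$, the $\tau_{\overline X}(S+1)/s$ and the $\tau_G(S+1)$ gain terms out of state $(S+1,I-1)$, present exactly when $I \ge 1$, against the matching losses out of $(S,I)$, whereas the transport term contributes the recovery gain $\gamma(I+1)n^X_{S,I+1}$, present when $S+I<\nmax$, and the loss $\gamma I\, n^X_{S,I}$. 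This is precisely \eqref{eq:subeq-sysdyn-structures}. The initial conditions \eqref{eq:initialcdts} are read off \eqref{eq:etastar}, noting that at time $0$ all $n-s$ previously infected coordinates are strictly positive almost surely, so that $\bfi = n-s$ and $n^X_{S,I}(0) = \binom{S+I}{I}\pi^X_{S+I}(1-\varepsilon)^S\varepsilon^I$.

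It remains to close the argument with a well-posedness step. The right-hand side of \eqref{eq:sysdyn} is smooth in the unknowns except for the division by $s(t)$, so it suffices to keep $s$ bounded away from $0$ on compact intervals. From $SI \le \nmax S$ and $\sum_{(S,I)\in\config} S\, n^X_{S,I}(t) \le \angles{\eta^X_t}{\bfs} = m_X s(t)$ one gets $\tau_X(t) \le \lambda_X \nmax\, s(t)$, and $\tau_G(t) s(t) \le \beta_G s(t)$, so that $\frac{d}{dt}s \ge -(\lambda_H\nmax + \lambda_W\nmax + \beta_G)\,s$ and hence $s(t) \ge (1-\varepsilon)\,\e^{-(\lambda_H\nmax + \lambda_W\nmax + \beta_G)t} > 0$ for all $t$. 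On the resulting region $\{s > 0\}$ the vector field is locally Lipschitz, so the Cauchy--Lipschitz theorem provides a unique global solution of \eqref{eq:sysdyn}--\eqref{eq:initialcdts}. Since the functions built from $\eta$ solve this system, they coincide with that solution, which is the assertion.
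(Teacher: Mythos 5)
Your overall architecture (test against $\bfs$, $\bfi$, $\chi_{S,I}$; exploit the exponential structure to identify the recovery flux; finish by a Lipschitz/Gronwall uniqueness argument) is the right shape, and your algebra for the infection terms, the initial conditions, and Equation \eqref{eq:subeq-sysdyn-si} is correct. But there is a genuine gap at the point you yourself identify as decisive: the conditional product-$\mathrm{Exp}(\gamma)$ structure of $\eta^X_t$. You justify it only heuristically (``the initial law is product-exponential, each infection injects a fresh $\mathrm{Exp}(\gamma)$, ageing preserves $\mathrm{Exp}(\gamma)$ by memorylessness''). These three facts are statements about a particle system, but in this theorem $\eta$ is \emph{not} given as the law of any particle system: it is characterized only as the solution of the measure-valued equation \eqref{eq:defeta}, so the invariance must be extracted analytically from that equation. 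This is exactly what the paper spends most of Section \ref{sec:proof-sysdyn} doing: Proposition \ref{prop:eta(l1loc)} first extends the weak formulation to merely measurable bounded observables by composing with the flow $\Psi$ (a time-reversal that makes the transport term vanish, so no flux computation is ever needed), and Proposition \ref{prop:dist-infected} then proves the exponential invariance by a Gronwall argument with an induction on the number of susceptibles. Without an analogue of these two steps, your flux identity $\int h'_\epsilon(\tau_k)\,\eta^X_t(d\tau) \to \gamma\,\P(\text{slot }k\text{ infectious})$ is unsupported: the ``absolute continuity of the marginals'' you invoke is only \emph{announced} after Theorem \ref{thm:cvgce}, never proved in the paper, and what you actually need is stronger than absolute continuity — you need the marginal density at $0^+$ to equal $\gamma$ times the mass of the positive part, which is precisely the exponential property you were trying to establish. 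As written, the argument is circular at its core.

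Two secondary points. First, even granting the invariance, your mollification route requires uniform-in-$t$ control to pass $\epsilon \to 0$ inside $\int_0^T \angles{\eta^X_t}{\A(\phi\, h_\epsilon)}\,dt$; this is doable but is additional work the time-reversal trick avoids entirely, which is why the paper's route (differentiating the explicit $\e^{-\gamma(T-t)}$ factors in $T$, as in Propositions \ref{prop:sXt-iXt} and \ref{prop:nXt}) is cleaner. Second, in your uniqueness step the bound $\sum_{(S,I)} S\, n^X_{S,I}(t) \le m_X s(t)$ is justified by the identification $\sum_{(S,I)} S\, n^X_{S,I} \le \angles{\eta^X_t}{\bfs}$, i.e.\ by the interpretation through $\eta$ — but uniqueness requires this a priori bound for an \emph{arbitrary} solution of the ODE system, which is an invariant-region argument (the paper's Proposition \ref{prop:sysdyn-sols}(i) and Lemma \ref{lem:derivative-ineq}), not a consequence of the construction. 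This one is fixable with a short additional argument; the missing proof of the exponential invariance is the substantive gap.
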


This dynamical system may be understood as follows. Equation \eqref{eq:subeq-sysdyn-si} corresponds to the fact that the proportion of susceptibles decreases whenever a new infection occurs within the general population, or within a household or workplace. Similarly, Equation \eqref{eq:subeq-sysdyn-i} is due to newly contaminated individuals moving from the susceptible to the infected state, which they in turn leave at rate $\gamma$. 
It remains to take an interest in Equation \eqref{eq:subeq-sysdyn-structures}. The first line indicates that a structure of type $(S,I) \in \config$ changes its composition upon either the infection of one of its susceptible members which may occur in any layer of the graph, or upon the removal of one of its infected members. Simultaneously, a structure of type $(S,I+1)$ transforms into a structure of type $(S,I)$ whenever one of its infected members recovers, while a structure of type $(S+1,I-1)$ becomes of type $(S,I)$ upon infection of a susceptible member. 
In particular, this result shows that under the assumptions of Theorem \ref{thm:cvgce-sysdyn}, in the large population limit, we may neglect the natural correlation between structures caused by the fact that infected individuals belong to two structures at once. This allows to obtain a stronger model reduction than in Theorem \ref{thm:cvgce}, in the sense that the model reduces to a finite-dimensional ODE-system instead of a measure-valued equation. 

Before detailing the proofs of Theorems \ref{thm:cvgce} and \ref{thm:cvgce-sysdyn}, let us examine the latter from a numerical point of view.

\subsection{Numerical assessment of the limiting dynamical system}
\label{sec:numeric}

The aim of this section is first to portray that the proposed large population limit, under the form of dynamical system \crefrange{eq:subeq-sysdyn-si}{eq:subeq-sysdyn-structures}, is in good accordance with the original stochastic model for large population sizes. This secondarily leads to some practical comments on the implementation of the dynamical system. Finally, a comparison with another reduced model for epidemics with two layers of mixing will be established, namely with an edge-based compartmental model (EBCM) in the line of work of Volz \emph{et al.} \cite{volzEffectsHeterogeneousClustered2011}.

\subsubsection{Implementation of the dynamical system and illustration of Theorem \ref{thm:cvgce-sysdyn}}

\begin{figure}
    \centering
    \includegraphics[width=0.8\textwidth]{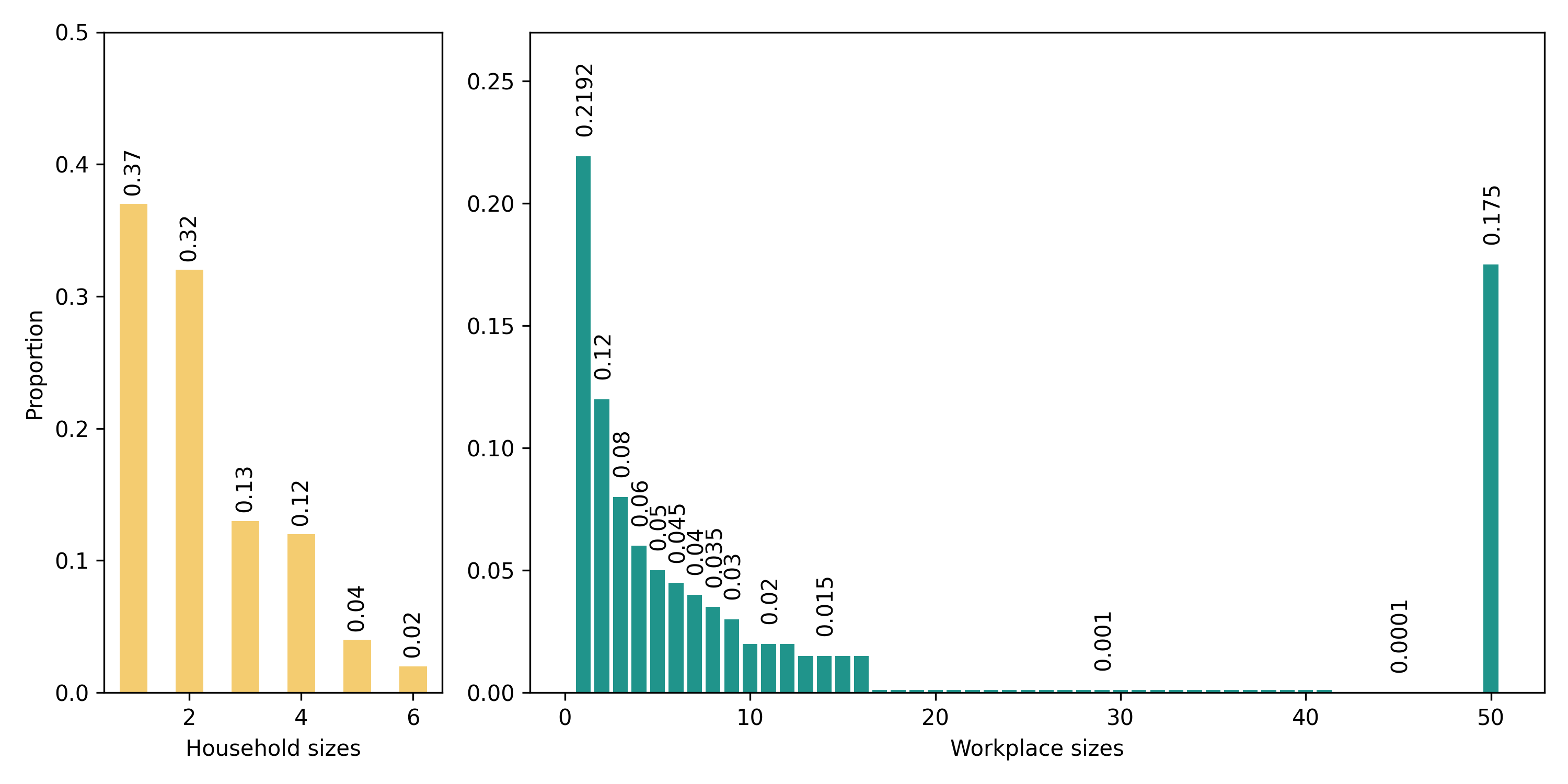}
    \caption{Household and workplace size distributions $\pi^H$ (left) and $\pi^W$ (right) used in simulations.}
    \label{fig:piX-insee}
\end{figure}

Let us start by illustrating the result of Theorem \ref{thm:cvgce-sysdyn} through numerical simulations. Using Gillespie's algorithm, we have performed fifty simulations of the epidemic within a population of $K = 10000$ individuals, where $\pi^H$ and $\pi^W$ are roughly inspired by the French household and workplace distributions as observed in 2018 by Insee \cite{bansayeEpidemiologicalFootprintContact2023}. These distributions are represented in Figure \ref{fig:piX-insee}. Two sets of epidemic parameters have been considered, leading to either $R_0 = 2.5$ or $R_0 = 1.2$. In both cases, the majority of contaminations take place at the local level. Indeed, for the first scenario with $R_0 = 2.5$, 42\% and 18\% of infections occur within households and workplaces, respectively, and these proportions both equal 40\% in the second scenario. Further, the epidemic is started by infecting either $10$ or $100$ individuals chosen uniformly at random at time $0$. For each simulation, we have followed the evolution of the proportion of susceptible and infected individuals within the population over time.

The simulation outcomes are presented in Figure \ref{fig:convergence}. For each choice of parameters, the solutions $s$ and $i$ of dynamical system \crefrange{eq:subeq-sysdyn-si}{eq:subeq-sysdyn-structures} with initial condition given by \eqref{eq:initialcdts} are plotted on the same graph.  As expected, one observes good accordance of the stochastic simulations and the deterministic functions $(s,i)$.

\begin{figure}
    \centering
    \includegraphics[width=\textwidth]{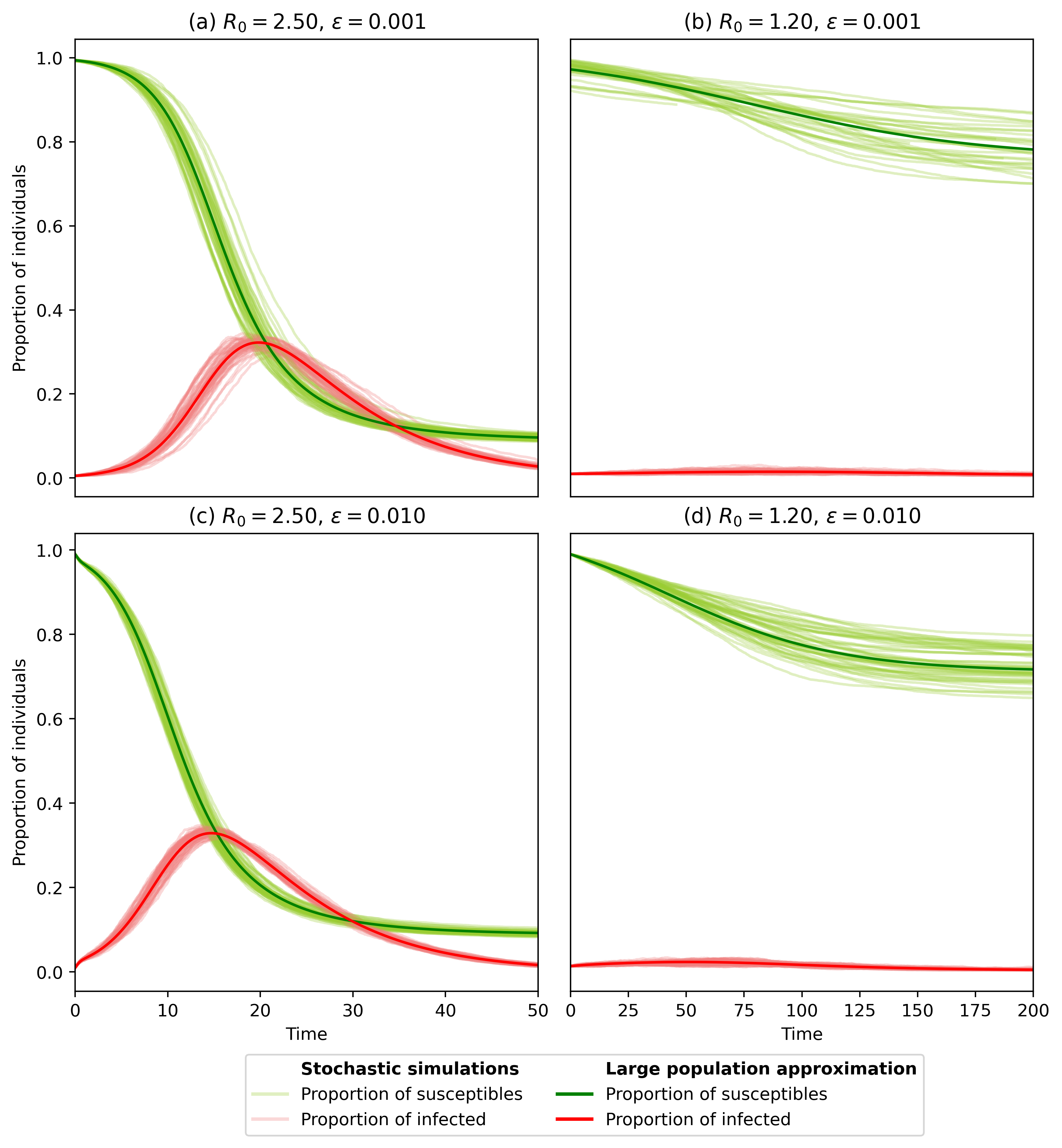}
    \caption{Comparison of the stochastic model with its large population approximation given by dynamical system \crefrange{eq:subeq-sysdyn-si}{eq:subeq-sysdyn-structures}. Household and workplace distributions are those of Figure \ref{fig:piX-insee}. Two sets of epidemic parameters are considered, namely $(\beta_G, \lambda_H, \lambda_W, \gamma) = (0.125, 1.5, 0.00115, 0.125)$ and $(\beta_G, \lambda_H, \lambda_W, \gamma) = (0.03, 0.05, 0.0015, 0.125)$ for the left and right column respectively ($R_0 = 2.5$ and $R_0 = 1.2$). The initial conditions are either $\varepsilon = 0.001$ in Panels (a) and (b), or $\varepsilon = 0.01$ in (c) and (d). For each of these scenarios, Gillespie's algorithm is used to simulate $50$ trajectories of the stochastic model defined in Proposition \ref{def:zeta} in a population of $K = 10000$ individuals (faint lines). For Panels (a) and (b), only trajectories reaching a threshold proportion of $0.005$ infected are kept, and time is shifted so that time $0$ corresponds to the moment when this threshold is reached.
    Finally, the deterministic solution $(s,i)$ of \crefrange{eq:subeq-sysdyn-si}{eq:subeq-sysdyn-structures} is represented for each scenario (thick lines). For Panels (a) and (b), the same time shifting procedure as for simulations is applied.}
    \label{fig:convergence}
\end{figure}
\bigskip

Before proceeding further, let us briefly emphasize a few aspects of the implementation of the proposed deterministic model. A potential drawback of dynamical system \crefrange{eq:subeq-sysdyn-si}{eq:subeq-sysdyn-structures} consists in its large dimension. Indeed, it holds that $\# \config = \nmax(\nmax+1)/2 - 1$. The number of equations of dynamical system \crefrange{eq:subeq-sysdyn-si}{eq:subeq-sysdyn-structures} is hence of order $O(\nmax^2)$. However, this fast-growing number of equations actually is manageable, as it is possible to implement the dynamical system in an automated way, in the sense that each equation does not need to be written one-by-one by the programmer. We refer to Appendix \ref{apx:implementation} for details.

Nevertheless, the large dimension of the dynamical system of interest raises the question whether it is numerically speaking interesting to actually use it for numerical explorations. We have compared the average time needed to either solve once dynamical system \crefrange{eq:subeq-sysdyn-si}{eq:subeq-sysdyn-structures}, or to simulate one trajectory of the stochastic model using Gillespie's algorithm, for different choices of epidemic parameters. In practice, for stochastic simulations, it is often necessary to compute several individual trajectories in order to obtain the general behavior of the epidemic. However, as it is possible to execute these simulations in parallel, comparison to one individual simulation seemed the most pertinent. The procedure and results are detailed in Appendix \ref{apx:runtime}. In summary, solving the reduced model is up to one order of magnitude faster than performing one stochastic simulation for values of $R_0 > 1$ that are not too close to the critical case $R_0 = 1$. This shows that the reduced model is pertinent for numerical exploration.

\subsubsection{Comparison to edge-based compartmental models} 
\label{sec:ebcm}

One may notice that the population structure, as described in Section \ref{sec:general-model}, can be regarded as a modification of the well-studied configuration model. Indeed, our network of household- and workplace-contacts may be seen as a two-layer graph, where each layer corresponds to a random graph generated as described in \cite{millerPercolationEpidemicsRandom2009, newmanRandomGraphsClustering2009}, that we shall call clique configuration model (CCM) hereafter. This random graph model generalizes configuration models to include small, totally connected sub-graphs referred to as \emph{cliques}. It then is possible to derive an EBCM for our household-workplace model by reasoning as in \cite{volzEffectsHeterogeneousClustered2011}. Details are provided in Appendix \ref{apx:ebcm}. 

Edge-based compartmental models on CCM variants have been known to be in good accordance with simulations of the corresponding stochastic epidemic models, under the assumption of a very small initial proportion of infected. In our case, we have confronted the EBCM with dynamical system \crefrange{eq:subeq-sysdyn-si}{eq:subeq-sysdyn-structures}, as well as simulated trajectories of our stochastic model. As expected, for very small values of $\varepsilon$, the EBCM and dynamical system \crefrange{eq:subeq-sysdyn-si}{eq:subeq-sysdyn-structures} both yield the correct epidemic dynamics, whereas for larger values of $\varepsilon$, the EBCM does not fit the simulated epidemic trajectories. We refer to Appendix \ref{apx:ebcm} for details. 

Finally, proceeding like before, we obtain that the number of equations of the EBCM is of order $O(\nmax^3)$. This has a strong negative impact on computation time, as briefly illustrated in Appendix \ref{apx:ebcm}, arguing against the applicability of this EBCM for numerical explorations. 

To conclude, in the particular case of the household-workplace model studied in this article, the EBCM seems to be equivalent to the large population approximation described by dynamical system \crefrange{eq:subeq-sysdyn-si}{eq:subeq-sysdyn-structures}, under the condition that the initial proportion of infected is very small. However, considering both the computational cost of its higher dimension and the loss of accuracy for more general initial conditions of the EBCM, the large population approximation given by dynamical system \crefrange{eq:subeq-sysdyn-si}{eq:subeq-sysdyn-structures} seems more pertinent in the case of the epidemic model under consideration.

\section{Proofs}
\label{sec:proofs}

This section is devoted to establishing Theorems \ref{thm:cvgce} and \ref{thm:cvgce-sysdyn}. As we will see, the proof of Theorem \ref{thm:cvgce} has the intrinsic difficulty of all convergence results for measure-valued processes, with some technical difficulty arising from the infectiousness being a discontinuous function of the remaining infectious period. This will become apparent in the proof of forthcoming Proposition \ref{prop:idlimit}.
Nevertheless, it allows us to obtain a deterministic prediction of the dynamics of the structure type distributions during the course of an epidemic. At this level, the limiting object is rich, allowing it to convey detailed information on the distribution of remaining infectious periods within structures. This however comes at the cost of an infinite-dimensional limiting object, which motivates the interest in trying to further reduce its dimension by adopting a coarser population description.
In the case where $\nu$ is the exponential distribution, Theorem \ref{thm:cvgce-sysdyn} shows that this actually is possible, the final reduced model taking the form of dynamical system \crefrange{eq:subeq-sysdyn-si}{eq:subeq-sysdyn-structures}. As mentioned previously, the existence of an asymptotically exact, closed, finite-dimensional ODE-system capturing the epidemic dynamics was not obvious from the beginning. 
Indeed, in order to obtain this result, we need to show that structuring the population of previously contaminated individuals by remaining infectious period is not necessary to handle the correlation of epidemic states of structures sharing a common infected. While this result fundamentally relies on the memory-less property of the exponential distribution, it will demand some effort, as illustrated in forthcoming Propositions \ref{prop:dist-infected} and \ref{prop:nXt}.

\subsection{Proof of Theorem \ref{thm:cvgce}}
\label{sec:proof-cvgce}

Let us start with the proof of Theorem \ref{thm:cvgce}. It follows a classical scheme, establishing tightness of $(\zeta^K)_{K \geq 1}$, whose limiting values are shown to satisfy Equation \eqref{eq:defeta}. Uniqueness of the solutions of this equation given the initial condition then ensures the desired convergence result. In particular, the proof is inspired by \cite{fournierMicroscopicProbabilisticDescription2004} and \cite{tranModelesParticulairesStochastiques2006}.

\subsubsection{Uniqueness and continuity of the solution of Equation (\ref{eq:defeta}).} 

We are first going to establish a uniqueness result for the solutions of Equation \eqref{eq:defeta}. Notice that we do not need to prove existence of solutions in this section, as forthcoming Proposition \ref{prop:idlimit} constructs such solutions as limiting values of $(\zeta^K)_{K \geq 1}.$

Let us start with a technical lemma, whose proof we present for sake of completeness.

\begin{lemma}
\label{lem:density}
    Let $f \in \C_b(E, \R)$. There exists a sequence $(f_k)_{k \geq 1}$ taking values in $\C^1_b(E, \R)$ such that $f_k$ converges simply to $f$ and $\sup_{k \geq 1} \inftynorm{f_k} \leq \inftynorm{f}$.
\end{lemma}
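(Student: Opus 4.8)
The plan is to reduce the statement to a textbook mollification argument by exploiting the hybrid discrete--continuous structure of $E$. \emph{First}, I would observe that $E$ splits into finitely many \emph{slices}
\[
E_{n,s} = \{(n,s,\tau) \in E\}, \qquad 1 \leq n \leq \nmax,\ 0 \leq s \leq n,
\]
indexed by the pairs $(n,s)$. Since the first two coordinates are integer-valued, any two points lying in distinct slices are at distance at least $1$; hence each $E_{n,s}$ is clopen in $E$ and $E$ is the (finite) topological disjoint union of its slices. Writing $d = n - s$, the constraint $\tau_j = 0$ for $j > d$ identifies $E_{n,s}$ isometrically with a copy of $\R^d$ (a single point when $d = 0$) through the free coordinates $(\tau_1, \dots, \tau_d)$. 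Two consequences matter: a function on $E$ belongs to $\C^1_b(E,\R)$ precisely when its restriction to each slice is bounded and $C^1$ with bounded derivative in the variables $(\tau_1,\dots,\tau_d)$ (this is the only differentiation that appears, e.g.\ in the operator $\A$); and it suffices to construct the approximants slice by slice, the gluing being automatic because the slices are topologically separated.

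\emph{Second}, on a fixed slice I would mollify. Set $g(\tau_1,\dots,\tau_d) = f(n,s,\tau)$, a bounded continuous function on $\R^d$ with $\inftynorm{g} \leq \inftynorm{f}$. Fix a nonnegative $\rho \in \C^\infty_c(\R^d)$ with $\int_{\R^d} \rho = 1$, let $\rho_k(y) = k^d \rho(ky)$, set $g_k = g * \rho_k$, and define $f_k(n,s,\tau) = g_k(\tau_1,\dots,\tau_d)$ on $E_{n,s}$ (and $f_k = f$ on the singleton slices where $d = 0$), keeping the constrained coordinates pinned at $0$.

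\emph{Third}, I would verify the three required properties. Each $g_k$ is $C^\infty$ since derivatives fall on the kernel, and $\inftynorm{\partial_i g_k} = \inftynorm{g * \partial_i \rho_k} \leq \inftynorm{g}\, \|\partial_i \rho_k\|_{L^1} < \infty$, so $f_k \in \C^1_b(E,\R)$. Because $\rho_k \geq 0$ and integrates to $1$, one gets $\inftynorm{g_k} \leq \inftynorm{g} \leq \inftynorm{f}$, whence $\sup_{k \geq 1} \inftynorm{f_k} \leq \inftynorm{f}$. Finally, continuity of $g$ gives $g_k(\tau') \to g(\tau')$ for every $\tau'$ (indeed locally uniformly), so $f_k \to f$ pointwise on each slice, hence on $E$.

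There is no serious obstacle here: the construction is the classical approximate-identity argument, and the only points requiring a little care are bookkeeping ones — recording that differentiation on $E$ means differentiation in the $d = n-s$ free coordinates, keeping the constrained coordinates fixed at $0$ under convolution, and noting that the finitely many slices can be handled with a common index $k$ so that gluing yields a genuine element of $\C^1_b(E,\R)$.
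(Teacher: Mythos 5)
Your proposal is correct and follows essentially the same route as the paper: both proofs approximate $f$ by convolution with a mollifier in the $\tau$-variables, using nonnegativity and unit mass of the kernel for the uniform bound and the approximate-identity property for pointwise convergence. Your slicewise formulation, convolving only in the $d = n-s$ free coordinates on each clopen slice $E_{n,s}$, is in fact slightly more careful than the paper's convolution $f(n,s,\cdot)*\psi_k$ on all of $\R^{\nmax}$, which as written evaluates $f$ at points violating the constraint $\tau_j = 0$ for $j > n-s$ and implicitly requires the restriction you make explicit.
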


\begin{proof}
    Consider a mollifier $\psi$, \emph{i.e.} $\psi \in \C^\infty(\R^{\nmax})$ is compactly supported, its mass $\int_{\R^{\nmax}} \psi(x)dx$ equals $1$, and for $k\geq 1$, the function $\psi_k: x \mapsto k^{\nmax} \psi(kx)$ satisfies $\lim_{k \to \infty} \psi_k = \delta_0$ in the sense of distributions.
    Let $f \in \C_b(E, \R)$. Define the sequence $(f_k)_{k \geq 1}$ as follows:
    \begin{equation*}
        f_k: (n,s,\tau) \in E \mapsto f(n,s,\cdot) * \psi_k (\tau).
    \end{equation*}

    Then, for any $x \in E$, by definition of $(\psi_k)_{k \geq 1}$ it holds that $\lim_{k \to \infty} f_k(x) = f(x)$. Further, as $\psi_k$ is of integral $1$ for any $k$, it is obvious that for any $k$, $\inftynorm{f_k} \leq \inftynorm{f}$. Finally, it also follows from the usual properties of convolution that for any $k \geq 1$, $f_k$ is smooth with respect to its last variable and the corresponding partial derivatives are bounded, hence $f_k \in \C^1_b(E, \R)$. 
\end{proof}

We may now turn to the main result of this paragraph. With slight abuse of notation, for an element $\eta = (\eta_1, \eta_2) \in \mathfrak{M}_1$, we define its total variation norm by $\| \eta \|_{TV} = \| \eta_1 \|_{TV} \vee \| \eta_2 \|_{TV}$.

\begin{prop}
\label{prop:uniqueness}

    Let $\eta_\star \in \mathfrak{M}_1$. Then Equation \eqref{eq:defeta} admits at most one measure-valued solution $\eta$ which belongs to $\C(\R_+, (\mathfrak{M}_1, \tvnorm))$, such that $\eta_0 = \eta_\star$.
\end{prop}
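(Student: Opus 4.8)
The plan is to run a Gronwall argument on the total variation distance between two solutions. Let $\eta=(\eta^H,\eta^W)$ and $\tileta=(\tileta^H,\tileta^W)$ both solve \eqref{eq:defeta} in $\C(\R_+,(\mathfrak{M}_1,\tvnorm))$ with $\eta_0=\tileta_0=\eta_\star$, and write $m^X_t=\|\eta^X_t-\tileta^X_t\|_{TV}$. The obstacle in estimating $m^X_T$ directly is the transport term $\int_0^T\angles{\eta^X_t}{\A f_t}\,dt$: its integrand involves the $\tau$-derivatives of $f$, which are not controlled by $\inftynorm{f}$, so a naive choice of test function cannot be closed in total variation. I would remove this term by testing against functions carried along the characteristic flow. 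Fix $T\ge0$ and $g\in\C^1_b(E,\R)$ with $\inftynorm{g}\le1$, and set $f(t,x)=g(\Psi(x,T,t))$. Since $\Psi$ is affine in $(t,x)$ on each slice $\{n\}\times\{s\}\times\R^{\nmax}$ and leaves $E$ invariant, $f\in\C^1_b(\R_+\times E,\R)$ with $\inftynorm{f_t}\le\inftynorm{g}$; moreover a direct computation of $\partial_t f$ via $\tfrac{d}{dt}\Psi$ shows $\A f_t\equiv0$. With this choice the transport term vanishes, and, as $\eta$ and $\tileta$ share the initial condition, the terms $\angles{\eta^X_0}{f_0}$ cancel upon subtraction.

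Before estimating, I would secure a positive lower bound on the susceptible mass, needed to control the denominators $\angles{\eta^{\overline X}_t}{\bfs}$. Testing \eqref{eq:defeta} with the time-independent function $\bfs$ — which lies in $\C^1_b(E,\R)$, being bounded and constant in the continuous variable $\tau$ — and using $\bfs(\mathfrak{j}(x,\cdot))=\bfs(x)-1$ shows that $t\mapsto\angles{\eta^X_t}{\bfs}$ is absolutely continuous with
\begin{equation*}
\frac{d}{dt}\angles{\eta^X_t}{\bfs}=-\lambda_X\angles{\eta^X_t}{\bfs\bfi}-\lambda_{\overline X}\frac{\angles{\eta^{\overline X}_t}{\bfs\bfi}}{\angles{\eta^{\overline X}_t}{\bfs}}\angles{\eta^X_t}{\bfs}-\beta_G\frac{\angles{\eta^H_t}{\bfi}}{\angles{\eta^H_0}{\bfn}}\angles{\eta^X_t}{\bfs}.
\end{equation*}
Since $\bfi\le\nmax$ gives $\angles{\eta^X_t}{\bfs\bfi}\le\nmax\angles{\eta^X_t}{\bfs}$ and $\angles{\eta^{\overline X}_t}{\bfs\bfi}/\angles{\eta^{\overline X}_t}{\bfs}\le\nmax$, while $\angles{\eta^H_0}{\bfn}\ge1$, the right-hand side is at least $-C\angles{\eta^X_t}{\bfs}$, whence $\angles{\eta^X_t}{\bfs}\ge\angles{\eta^X_\star}{\bfs}\,\e^{-Ct}$. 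Thus, provided $\angles{\eta^X_\star}{\bfs}>0$ for $X\in\{H,W\}$, one gets $\inf_{t\le T}\angles{\eta^X_t}{\bfs}\ge c_T>0$. (If some $\angles{\eta^X_\star}{\bfs}=0$, then $\angles{\eta^X_t}{\bfs}\equiv0$, so $\eta^X_t$ charges only states with no susceptible and every infection term vanishes; the equation reduces to pure transport, where uniqueness is immediate.)

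With $\A f_t=0$ and the lower bound in hand, subtracting the two equations leaves a sum of differences of the three families of jump integrals, each to be bounded by $\int_0^T(m^H_t+m^W_t)\,dt$ up to a constant. The gain terms $f^\I_t-f_t$ are bounded by $2\inftynorm{g}\le2$ and $\bfs,\bfi\le\nmax$, so every integrand is a fixed bounded measurable function tested against $\eta^X_t-\tileta^X_t$, which total variation controls by $\inftynorm{\cdot}\,m^X_t$ (the discontinuity of $\bfi$ at $\tau_k=0$ is harmless, as total variation dominates integrals of all bounded measurable functions). For the two terms carrying measure-dependent coefficients I would use the splitting $|ab-\tilde a\tilde b|\le|a-\tilde a|\,|b|+|\tilde a|\,|b-\tilde b|$: the coefficient $\angles{\eta^H_t}{\bfi}/\angles{\eta^H_0}{\bfn}$ in the $\beta_G$ term has a fixed denominator ($\ge1$, identical for both solutions) and numerator controlled by $m^H_t$, while the ratio $\angles{\eta^{\overline X}_t}{\bfs\bfi}/\angles{\eta^{\overline X}_t}{\bfs}$ in the $\lambda_{\overline X}$ term is Lipschitz in the measure: writing the difference over the common denominator $\angles{\eta^{\overline X}_t}{\bfs}\angles{\tileta^{\overline X}_t}{\bfs}\ge c_T^2$ and expanding the numerator bounds it by $C\,c_T^{-2}\,m^{\overline X}_t$. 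Collecting everything gives, for all admissible $g$,
\begin{equation*}
\left|\angles{\eta^X_T-\tileta^X_T}{g}\right|\le C_T\int_0^T\left(m^H_t+m^W_t\right)dt,
\end{equation*}
with $C_T$ depending only on $\nmax,\lambda_H,\lambda_W,\beta_G,\angles{\eta^H_\star}{\bfn}$ and $c_T$.

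Finally, taking the supremum over $g\in\C^1_b(E,\R)$ with $\inftynorm{g}\le1$ recovers $m^X_T$ on the left: for finite measures on a metric space the total variation distance equals the supremum of $|\angles{\cdot}{g}|$ over bounded continuous $g$ of sup-norm at most one, and Lemma \ref{lem:density} together with dominated convergence lets one further restrict this supremum to $\C^1_b$. Summing over $X\in\{H,W\}$ yields $m^H_T+m^W_T\le 2C_T\int_0^T(m^H_t+m^W_t)\,dt$; since $t\mapsto m^H_t+m^W_t$ is continuous (both solutions lie in $\C(\R_+,(\mathfrak{M}_1,\tvnorm))$) and vanishes at $0$, Gronwall's lemma forces it to vanish on $[0,T]$, and as $T$ is arbitrary, $\eta=\tileta$. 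I expect the main obstacle to be exactly the two structural features handled above: the transport operator $\A$, which dictates the flow-adapted choice of test function, and the measure-valued denominators, whose Lipschitz control hinges on the a priori lower bound for the susceptible mass.
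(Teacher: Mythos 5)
Your proposal is correct and rests on the same central device as the paper's proof: the flow-adapted test function $f_t(x)=g(\Psi(x,T,t))$ that annihilates the transport operator $\A$, followed by passage from $\C^1_b$ test functions to the total variation norm via Lemma \ref{lem:density} and the characterization of $\tvnorm$ on Polish spaces, and finally Gronwall. Where you genuinely diverge is in the treatment of the measure-dependent coefficients $\angles{\eta^{\overline X}_t}{\bfs\bfi}/\angles{\eta^{\overline X}_t}{\bfs}$ and $\angles{\eta^H_t}{\bfi}/\angles{\eta^H_0}{\bfn}$. The paper only invokes the pointwise bounds of Inequalities \eqref{eq:ineq-eta} (e.g. the first ratio is at most $\nmax$) and then applies Gronwall to $\|\eta^H_t-\overline{\eta}^H_t\|_{TV}$ component-wise; as written, this compresses the product-difference step, in which the cross term $|a-\tilde a|\,|\tilde b|$ couples the two components and requires Lipschitz control of the ratio in the measure. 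Your proof supplies exactly the two ingredients that make this step airtight: an a priori exponential lower bound on the susceptible mass $\angles{\eta^X_t}{\bfs}$ (obtained by testing with $\bfs$, with the degenerate case $\angles{\eta^X_\star}{\bfs}=0$ collapsing to pure transport), which renders the ratio Lipschitz with constant $C c_T^{-2}$, and a coupled Gronwall inequality on $m^H_t+m^W_t$ rather than on each component separately. This costs a slightly longer argument and a $T$-dependent constant, but it buys a fully closed estimate where the paper relies on the reader to fill in the coefficient-difference terms; both routes yield the same uniqueness conclusion.
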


 From now on, for $X \in \{H,W\}$, define $n_X=  \langle \eta^X_0, \bfn \rangle$, $s_X(t) = \langle \eta^X_t, \mathbf{s} \rangle$ and $i_X(t) = \langle \eta^H_t, \mathbf{i} \rangle$. Let us establish the proposition. 

\begin{proof}
    First, notice that it follows immediately from Equation \eqref{eq:defeta} that $\angles{\eta^X_T}{1} = \angles{\eta^X_0}{1}$ for $X \in \{H,W\}$, thus $\eta_0 \in \mathfrak{M}_1$ implies that for any $T \geq 0$, $\eta_T \in \mathfrak{M}_1$. 

   Let us show that any solution $\eta$ of Equation \eqref{eq:defeta} belongs to $\C(\R_+,(\mathfrak{M}_1, \tvnorm))$. In order to do so, it is enough to show that $\eta^X \in \C(\R_+, (\M_1(E), \tvnorm))$ for any $X \in \{H,W\}$. We are going to detail the proof for $\eta^H$ only, as $\eta^W$ can be handled in the same way.
   
   Let $T\geq 0$ and $g \in \C^1_b(E, \R)$ such that $\inftynorm{g} \leq 1$. Consider the function defined by
   \begin{equation*}
       \forall (t,x) \in \R \times E, \quad f_t(x) = g(\Psi(x,T,t))
   \end{equation*}
    and recall that $f^\I_t(x) = \angles{\nu}{f_t(\mathfrak{j}(x,\cdot))}$.
    Then by definition, $f_T(x) = g(x)$. It follows from the assumption $g \in \C^1_b(E, \R)$ that $f \in \C^1_b(\R_+ \times E,\R)$. The advantage of this construction is that $t \mapsto \Psi(x,T,t)$ corresponds to a reversal of time, which cancels out the deterministic dynamics described by the differential operator $\A$. Indeed, letting $x = (n,s,\tau)$, a brief computation shows that 
    \begin{equation*}
        \begin{aligned}
            \partial_t f_t(x) = \sum_{k=1}^{n-s} \partial_{\tau_k} g(\Psi(x,T,t)) \text{ and }
            \partial_{\tau_k} f_t(x) = \partial_{\tau_k} g(\Psi(x,T,t)),
        \end{aligned}
    \end{equation*}
   which yields that $\A f_t(x) = 0$ for all $(t,x) \in \R \times E$. Using the fact that $\angles{\eta^H_T}{g} = \angles{\eta^H_T}{f_T}$,  it follows from Equation \eqref{eq:defeta} that 
   \begin{equation*}
   \begin{aligned}
       \angles{\eta^H_T}{g} &= \angles{\eta^H_0}{f_0} + \lambda_H \int_0^T \angles{\eta^H_t}{\bfs \bfi \left(f^\I_t - f_t\right)} dt \\
       &+ \lambda_W \int_0^T \frac{1}{s_W(t)} \angles{\eta^W_t}{\bfs\bfi} \angles{\eta^H_t}{\bfs \left(f^\I_t - f_t\right)} dt
       + \beta_G \int_0^T \frac{i_H(t)}{n_H} \angles{\eta^H_t}{\bfs \left(f^\I_t - f_t\right)} dt.
    \end{aligned}
   \end{equation*}

    Recall that $i_H(t) \leq \nmax$ and $\frac{i_H(t)}{n_H} \leq 1$ since for any $x \in E$, $\bfi(x) \leq \bfn(x)$. We may notice that the following inequalities hold, as for any $t$, $\inftynorm{f_t} \leq 1$:
    \begin{equation}
    \label{eq:ineq-eta}
    \begin{aligned}[c]
        &\angles{\eta^H_t}{\bfs \bfi \left(f^\I_t - f_t\right)} \leq 2 (\nmax)^2, \\
        &\frac{1}{s_W(t)} \angles{\eta^W_t}{\bfs\bfi} \leq \nmax, 
    \end{aligned}
    \quad \quad
    \begin{aligned}[c]
        &\angles{\eta^H_t}{\bfs \left(f^\I_t - f_t\right)} \leq 2 \nmax,\\
        &\frac{i_H(t)}{n_H} \angles{\eta^H_t}{\bfs \left(f^\I_t - f_t\right)} \leq 2 \nmax.
    \end{aligned}
    \end{equation}

    Let $C = 2\nmax\left(\lambda_H\nmax + \lambda_W \nmax + \beta_G\right)$ and let $\epsilon \in \R$. It then follows from Inequalities \eqref{eq:ineq-eta} that
    \begin{equation}
    \label{eq:aux1}
        |\angles{\eta^H_T - \eta^H_{T +\epsilon}}{g}| \leq C|T - (T+\epsilon)| = C|\epsilon|.
    \end{equation}
    Consider now $h \in \C_b(E,\R)$ such that $\inftynorm{h} \leq 1$. Lemma \ref{lem:density} ensures that there exists a sequence $(g_k)_{k \geq 1}$ taking values in $\C^1_b(E,\R)$ which converges simply to $h$ and such that $\inftynorm{g_k} \leq 1$. By dominated convergence, this implies that 
     \begin{equation*}
        |\angles{\eta^H_T - \eta^H_{T +\epsilon}}{h}| \leq C|T - (T+\epsilon)| = C|\epsilon|.
    \end{equation*}
    As $E$ is a Polish space, it follows from Proposition A.6.1 of \cite{tranModelesParticulairesStochastiques2006} that 
    \begin{equation}
    \label{eq:aux2}
        || \eta^H_T - \eta^H_{T +\epsilon} ||_{TV} = \sup_{h \in \C_b(E, \R): \inftynorm{h} \leq 1} |\angles{\eta^H_T - \eta^H_{T +\epsilon}}{h}| \leq C|\epsilon|.
    \end{equation}
    As this holds for any $\epsilon$ and any $T \geq 0$, the strong continuity of $\eta^H$ is established.

    It remains to establish uniqueness of the solution $\eta$ of Equation \eqref{eq:defeta} with initial condition $\eta_0 = \eta_\star$. We will once more establish uniqueness component-wise, and focus on $\eta^H$ as $\eta^W$ is treated in a similar fashion. 

    Let $\eta, \overline{\eta}$ be two solutions of Equation \eqref{eq:defeta} with initial condition $\eta_\star$. Let $\overline{i}_H(t) = \angles{\overline{\eta}^H_t}{\bfi}$ and define $\overline{s}_W(t)$ in analogous manner. As before, let $T \geq 0$. Consider again $g \in \C^1_b(E,\R)$ such that $\inftynorm{g} \leq 1$, and define $f_t$ and $f^\I_t$ as previously. Then
    \begin{equation*}
    \begin{aligned}
        |\angles{\eta^H_T& - \overline{\eta}^H_T}{g}|  \leq \lambda_H \int_0^T \left| \angles{\eta^H_t - \overline{\eta}^H_t}{\bfs \bfi \left(f^\I_t - f_t \right)} \right| dt \\
        &+ \lambda_W \int_0^T \left|\frac{1}{s_W(t)} \angles{\eta^W_t}{\bfs\bfi} \angles{\eta^H_t}{\bfs(f^\I_t - f_t)} - \frac{1}{\overline{s}_W(t)} \angles{\overline{\eta}^W_t}{\bfs\bfi} \angles{\overline{\eta}^H_t}{\bfs(f^\I_t - f_t)} \right| dt \\
        &+ \frac{\beta_G}{m_H} \int_0^T \left|i_H(t) \angles{\eta^H_t}{\bfs(f^\I_t - f_t)} - \overline{i}_H(t) \angles{\overline{\eta}^H_t}{\bfs(f^\I_t - f_t)} \right| dt.
    \end{aligned}
    \end{equation*}
    Proceeding similarly as in Inequalities \eqref{eq:ineq-eta}, we obtain that 
    \begin{equation*}
    \begin{aligned}
        |\angles{\eta^H_T - \overline{\eta}^H_T}{g}| &\leq \frac{C}{2} \int_0^T \left| \angles{\eta^H_t - \overline{\eta}^H_t}{f^\I_t - f_t} \right| dt \\
        & \leq \frac{C}{2}\left(\int_0^T \left| \angles{\eta^H_t - \overline{\eta}^H_t}{f^\I_t} \right| dt + \int_0^T \left| \angles{\eta^H_t - \overline{\eta}^H_t}{f_t} \right| dt \right).
    \end{aligned}
    \end{equation*}

    On the one hand, by definition, $f_t \in \C_b(E,\R)$ and $\inftynorm{f_t} \leq 1$. Thus $\left| \angles{\eta^H_t - \overline{\eta}^H_t}{f_t} \right| \leq ||\eta^H_t - \overline{\eta}^H_t||_{TV}$. On the other hand, it follows from the usual criterion of continuity for parametric integrals that $f^\I_t$ is continuous on $E$ and $\inftynorm{f^\I_t} \leq \inftynorm{f_t} \leq 1$. As a consequence, $\left| \angles{\eta^H_t - \overline{\eta}^H_t}{f^\I_t} \right| \leq ||\eta^H_t - \overline{\eta}^H_t||_{TV}$. Hence
    \begin{equation*}
        |\angles{\eta^H_T - \overline{\eta}^H_T}{g}| \leq C \int_0^T ||\eta^H_t - \overline{\eta}^H_t||_{TV} dt. 
    \end{equation*}

    We then may follow the same steps that allowed to establish Equation \eqref{eq:aux2} from Equation \eqref{eq:aux1}, and obtain that  
    \begin{equation*}
        ||\eta^H_T - \overline{\eta}^H_T||_{TV} \leq C \int_0^T ||\eta^H_t - \overline{\eta}^H_t||_{TV} dt.
    \end{equation*}

    Gronwall's lemma then assures that 
    \begin{equation*}
    \begin{aligned}
        \forall t \in [0,T], \;\; ||\eta^H_t - \overline{\eta}^H_t||_{TV} = 0.
    \end{aligned}
    \end{equation*}
    One obtains the analogous result for $\eta^W$ in the same manner. As $T \geq 0$ is arbitrary, this concludes the proof.
\end{proof}

\subsubsection{Tightness of \texorpdfstring{$(\zeta^K)_{K \geq 1}$}{the individual-based process} in \texorpdfstring{$\D\left(\R_+, (\M_F(E), w)\right)^2$}{the appropriate Skorokhod space}}

Let us now turn to the tightness of $(\zeta^K)_{K \geq 1}$ in $\D\left(\R_+, (\M_F(E), w)\right)^2$, where $w$ designates the weak topology on $\M_F(E)$. We start by establishing the following preliminary result, whose proof relies on the chain rule.

\begin{lemma}
\label{lem:dphif}
Let $f \in \C^1_b(\R_+\times E,\R)$. Then for any $T \geq t_0 \geq 0$, for any $x \in E$, 
\begin{equation*}
f\left(T, \Psi(x, T, t_0)\right) = f\left(t_0, x \right) + \int_{t_0}^T \A f\left(t,\Psi(x, t, t_0)\right) dt.
\end{equation*}
\end{lemma}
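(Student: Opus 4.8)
The plan is to recognize the claimed identity as the integrated form of the fundamental theorem of calculus applied to the composition $t \mapsto f(t, \Psi(x,t,t_0))$. Fix $x = (n,s,\tau) \in E$ and $t_0 \geq 0$, and define the auxiliary function $\phi(t) = f(t, \Psi(x,t,t_0))$ for $t \in [t_0, T]$. The goal reduces to showing that $\phi$ is continuously differentiable on $[t_0, T]$ with $\phi'(t) = \A f(t, \Psi(x,t,t_0))$, since integrating this from $t_0$ to $T$ and using $\phi(t_0) = f(t_0, \Psi(x,t_0,t_0)) = f(t_0, x)$ yields exactly the stated equation.

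First I would make the dependence on $t$ fully explicit. Recall from the definition that
\begin{equation*}
\Psi(x,t,t_0) = \left(n,\, s,\, \tau - \sum_{j=1}^{n-s}(t - t_0) e_j\right),
\end{equation*}
so only the last component moves with $t$, and it does so affinely: the $j$-th coordinate of the $\tau$-part equals $\tau_j - (t-t_0)$ for $1 \leq j \leq n-s$ and is constant (equal to $\tau_j$) for $j > n-s$. Thus $\phi$ is the composition of the $\C^1$ map $f$ with a smooth (indeed affine) curve in $\R_+ \times E$, and the assumption $f \in \C^1_b(\R_+ \times E, \R)$ guarantees that $\phi$ is continuously differentiable.

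Next I would compute $\phi'(t)$ via the chain rule. Writing $\Psi(x,t,t_0) = (n,s,\rho(t))$ with $\rho_j(t) = \tau_j - (t-t_0)\ind_{\{j \leq n-s\}}$, differentiation gives $\frac{d}{dt}\rho_j(t) = -\ind_{\{j \leq n-s\}}$, in accordance with the deterministic dynamics $\frac{d}{dt}\tau^X_{k,j}(t) = -\ind_{\{j \leq n^X_k - s^X_k(t)\}}$ described for the process. Hence
\begin{equation*}
\phi'(t) = \partial_t f(t, \Psi(x,t,t_0)) + \sum_{j=1}^{n-s} \partial_{\tau_j} f(t, \Psi(x,t,t_0)) \cdot (-1) = \A f(t, \Psi(x,t,t_0)),
\end{equation*}
which is precisely the definition of $\A$. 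Since $t \mapsto \A f(t,\Psi(x,t,t_0))$ is continuous (as $f$ has continuous bounded partials and the curve is continuous), $\phi'$ is continuous and the ordinary fundamental theorem of calculus applies on $[t_0,T]$, delivering the integral identity.

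There is essentially no serious obstacle here: the statement is a direct chain-rule computation, and the only minor point worth care is that the number of active coordinates in the sum defining $\A$ matches the number $n-s$ of non-frozen components of $\Psi$, which holds because $\Psi$ preserves both $n$ and $s$. One should also note that no infection occurs along the flow $\Psi$, so $s$ (and hence the index set $\{1,\dots,n-s\}$ of decreasing coordinates) stays fixed throughout $[t_0,T]$, making the vector field autonomous in its spatial part and the differentiation unambiguous.
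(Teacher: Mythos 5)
Your proposal is correct and follows essentially the same route as the paper's proof: both write the map $t \mapsto f(t,\Psi(x,t,t_0))$ as the composition of $f$ with an affine curve whose moving coordinates are exactly the first $n-s$ components of $\tau$, apply the chain rule to identify the derivative with $\A f(t,\Psi(x,t,t_0))$, and conclude by the fundamental theorem of calculus. Your added remarks on continuity of the derivative and on $\Psi$ preserving $(n,s)$ are consistent with (and slightly more explicit than) the paper's argument.
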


\begin{proof}
For $x = (n,s,\tau) \in E$ and $t_0 \in \R_+$, define 
\begin{equation*}
    g_{t_0,x}: [t_0, +\infty) \to \R, \; T \mapsto f\left(T, \Psi(x, T, t_0)\right).
\end{equation*}
Let us start by noticing that for any $(t_0, x) \in \R_+ \times E$, $g_{t_0,x} \in \C^1(\R_+)$. Indeed, $g_{t_0,x} = f_{n,s} \circ h_{t_0,x}$, where 
\begin{align*}
& f_{n,s}: \R_+ \times \R^{\nmax} \to \R, \;\; (u,\mathbf{v}) \mapsto f\left(u, (n, s,\mathbf{v})\right) \\
\text{and } & h_{t_0, x}: \R_+ \to \R^{1+\nmax}, \;\;  t \mapsto \left(t, \tau - \sum_{k=1}^{n-s} (t-t_0)e_{k}\right).
\end{align*}

The chain rule and a quick computation of the differentiable of $h_{t_0,x}$ yields that for every $t \geq t_0$, 
\begin{equation*}
\frac{d}{dt} g_{t_0,x}(t) = \partial_1 f_{n,s}(h_{t_0,x}(t)) - \sum_{k=1}^{n-s} \partial_{k+1}f_{n,s}(h_{t_0,x}(t)). 
\end{equation*}
Notice that on the one hand, $\partial_1 f_{n,s}(u,\mathbf{v}) = \partial_t f(u,(n,s,\mathbf{v}))$ and on the other, $k \geq 2$, $\partial_k f_{n,s}(u,\mathbf{v}) = \partial_{\tau_{k-1}} f(u,(n,s,\mathbf{v}))$. As a consequence, we have shown that for any $(t_0, x) \in \R_+ \times E$, for every $t \geq t_0$, $g_{t_0,x}$ is differentiable at $t$ and satisfies 
\begin{equation*}
\frac{d}{dt} g_{t_0,x} (t) = \A f\left(t, \Psi(x, t, t_0)\right).
\end{equation*}
This concludes the proof.
\end{proof}

Throughout the section, we use the notation $\mathcal{S} = \{H,W,G\}$. Also, for any $f \in \C^1_b(\R_+ \times E, \R)$, let $f_t(x) = f(t,x)$ for any $(t,x) \in \R_+ \times E$. Finally, we define for any continuous bounded function $g: \R_+ \times E \to \R$, for any $t \geq 0$ and $u = (\bm{\theta}, k, \ell, \sigma) \in \bigcup_{Y \in \mathcal{S}} U_Y$: 
\begin{equation*}
 \begin{aligned}
    g^H_{t,u} &= g(t, \mathfrak{j}(x^H_k(t),\sigma)) - g(t, x^H_k(t))
    \quad \text{and} \quad g^W_{t,u} &= g(t, \mathfrak{j}(x^W_\ell(t),\sigma)) - g(t, x^W_\ell(t)).
\end{aligned}
\end{equation*}

\begin{prop}
\label{prop:zeta(lip)}
Consider $\zeta^K$ as introduced in Proposition \ref{def:zeta}. For any $f \in \C^1_b(\R_+ \times E, \R)$, $T \geq 0$ and $X \in \{H,W\}$,
{\small
\begin{equation*}
\begin{aligned}
    \angles{\zeta^{X|K}_T}{f_T} = \angles{\zeta^{X|K}_0\!}{f_0} + \int_0^T \! \angles{\zeta^{X|K}_t}{\A f_t}dt + \frac{1}{K_X} \sum_{Y \in \mathcal{S}} \int_0^T \int_{U_Y} \mathcal{I}_Y(t-,u) f^X_{t-,u} Q_Y(dt,du).
\end{aligned}
\end{equation*}
}
\end{prop}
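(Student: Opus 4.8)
The plan is to read the decomposition directly off the defining equation \eqref{eq:def-zeta}, using Lemma \ref{lem:dphif} to trade the deterministic transport encoded in $\Psi$ for the operator $\A$. The only non-algebraic ingredient is a pathwise Fubini exchange, which is harmless because each Poisson integral is an almost surely finite sum.

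First I would evaluate \eqref{eq:def-zeta} at an intermediate time $t$ instead of $T$ and pair it with $\A f_t$, which gives
\[
\angles{\zeta^{X|K}_t}{\A f_t} = \frac{1}{K_X}\Big(\sum_{j=1}^{K_X} \A f_t\big(\Psi(x^X_j(0), t, 0)\big) + \sum_{Y \in \mathcal{S}} \int_0^t \int_{U_Y} \mathcal{I}_Y(r-,u)\, \angles{\Delta_X(u,t,r)}{\A f_t}\, Q_Y(dr,du)\Big).
\]
Integrating this identity over $t \in [0,T]$ and exchanging the order of the observation-time integral $dt$ and the jump integral $Q_Y(dr,du)$ turns each double integral into
\[
\int_0^T\!\! \int_{U_Y} \mathcal{I}_Y(r-,u)\Big(\int_r^T \angles{\Delta_X(u,t,r)}{\A f_t}\, dt\Big)\, Q_Y(dr,du).
\]
This swap is justified pathwise: by the rejection-sampling bound in the proof of Proposition \ref{def:zeta}, only finitely many atoms of $Q_Y$ in $[0,T]\times U_Y$ satisfy $\mathcal{I}_Y(r-,u)=1$, so the $Q_Y$-integral is an almost surely finite sum; moreover $\A f$ is bounded (since $f \in \C^1_b$ and the defining sum has at most $\nmax$ terms), so each inner $dt$-integral is finite.

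The decisive step is to apply Lemma \ref{lem:dphif} termwise. For the initial-condition sum it yields $\int_0^T \A f_t(\Psi(x^X_j(0), t, 0))\,dt = f_T(\Psi(x^X_j(0),T,0)) - f_0(x^X_j(0))$. For a jump at time $r$, applying the lemma with $t_0=r$ to each of the two Diracs constituting $\Delta_X(u,t,r)$ and subtracting gives
\[
\int_r^T \angles{\Delta_X(u,t,r)}{\A f_t}\, dt = \angles{\Delta_X(u,T,r)}{f_T} - f^X_{r-,u},
\]
the boundary contribution at $r$ being exactly $f^X_{r-,u}$ by definition of $f^X$ (using that $f$ is continuous in time, so $f(r-,\cdot)=f(r,\cdot)$).

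Substituting these identities back, the $f_T$-boundary terms recombine, through \eqref{eq:def-zeta} evaluated at $T$, into $\angles{\zeta^{X|K}_T}{f_T}$; the $f_0$-terms assemble into $\angles{\zeta^{X|K}_0}{f_0}$ (recall $\Psi(x,0,0)=x$); and the residual $-f^X_{r-,u}$ terms, once moved to the other side, produce precisely the stochastic integral appearing in the statement. A final rearrangement yields the claimed formula. I expect the only delicate point to be purely organisational—keeping the Fubini exchange and the two-sided use of Lemma \ref{lem:dphif} consistently aligned—rather than any genuine analytic obstacle.
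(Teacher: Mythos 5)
Your proof is correct and uses the same ingredients as the paper's — Equation \eqref{eq:def-zeta}, Lemma \ref{lem:dphif}, and a Fubini exchange between the observation-time integral and the Poisson integral — only traversed in the opposite direction: the paper starts from $\angles{\zeta^{X|K}_T}{f_T}$, expands each term via Lemma \ref{lem:dphif}, and reassembles the $\A f$ contributions into $\int_0^T \angles{\zeta^{X|K}_t}{\A f_t}\,dt$, whereas you start from that integral and reassemble the boundary terms into $\angles{\zeta^{X|K}_T}{f_T}$. This is essentially the same argument; your pathwise justification of the Fubini step (almost surely finitely many accepted atoms on $[0,T]$) is also valid, where the paper instead invokes boundedness of $f$ and $\A f$ together with Fubini's theorem.
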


\begin{proof}
    
    Let $f \in \C^1_b(\R_+ \times E, \R)$ and $X \in \{H,W\}$. Recall that, by definition, for any bounded function $g: E \to \R$, for any $T \geq t \geq 0$ and $u = (\bm{\theta}, k, \ell, \sigma) \in \bigcup_{Y \in \mathcal{S}} U_Y$:
    \begin{equation*}
            \angles{\Delta_H(u,T,t)}{g} = g(\Psi(\mathfrak{j}(x^H_k(t-),\sigma),T,t)) - g(\Psi(x^H_k(t-),T,t)), 
    \end{equation*}
    and $\angles{\Delta_W(u,T,t)}{g}$ is defined analogously, by replacing $H$ by $W$ and $k$ by $\ell$. 
    
    From Equation \eqref{eq:def-zeta}, it follows that 
    {\small
    \begin{equation*}
            \angles{\zeta^{X|K}_T\!}{f_T} \! = \! \frac{1}{K_X} \!\! \left( \sum_{j=1}^{K_X} f_T\left(\Psi(x^X_j(0),T,0) \right) \! + \!\! \sum_{Y \in \mathcal{S}} \int_0^T \!\!\!\!\! \int_{U_Y} \!\!\! \mathcal{I}_Y(t-,u)\angles{\Delta_X(u,T,t)}{f_T} Q_Y(dt,du)\!\! \right)\!\!.
    \end{equation*}
    } 
    Using the result from Lemma \ref{lem:dphif}, this becomes:
    \begin{equation*}
        \begin{aligned}
            \angles{\zeta^{X|K}_T}{f_T} &= \frac{1}{K_X} \sum_{j=1}^{K_X} \left(f_0(x^X_j(0)) + \int_0^T \A f_t(\Psi(x^X_j(0),t,0)) dt \right) \\
            &+ \frac{1}{K_X} \sum_{Y \in \mathcal{S}} \int_0^T \int_{U_Y} \mathcal{I}_Y(t-,u)\left(\int_t^T \angles{\Delta_X(u,z,t)}{\A f_{z}} dz \right) Q_Y(dt,du) \\
            &+ \frac{1}{K_X} \sum_{Y \in \mathcal{S}} \int_0^T \int_{U_Y} \mathcal{I}_Y(t-,u)f^X_{t-,u} Q_Y(dt,du). \\
        \end{aligned}
    \end{equation*}
    It follows from the definition of $\C^1_b(\R_+ \times E,\R)$ that both $f$ and $\A f$ are bounded, hence we may apply Fubini's theorem to obtain that 
    {\small
     \begin{equation*}
        \begin{aligned}
            &\angles{\zeta^{X|K}_T}{f_T} = \frac{1}{K_X} \sum_{j=1}^{K_X} f_0(x^X_j(0)) + \frac{1}{K_X} \sum_{Y \in \mathcal{S}} \int_0^T \int_{U_Y} \mathcal{I}_Y(t-,u)f^X_{t-,u} Q_Y(dt,du)\\
            &+ \frac{1}{K_X} \int_0^T \! \left(\sum_{j=1}^{K_X} \A f_{z}(\Psi(x^X_j(0),z,0)) + \sum_{Y \in \mathcal{S}} \int_0^{z} \!\!\! \int_{U_Y} \! \mathcal{I}_Y(t-,u)\angles{\Delta_X(u,z,t)}{\A f_{z}} Q_Y(dt,du) \right) \!\! dz. \\
        \end{aligned}
    \end{equation*}
    }
    The first sum on the right-hand side equals $\angles{\zeta^{X|K}_0}{f_0}$. From the second line, one recognizes in the integrand the definition of $\angles{\zeta^{X|K}_{z}}{\A f_{z}}$ from Equation \eqref{eq:def-zeta}. This yields the desired result.
\end{proof}

For $Y \in \mathcal{S}$, let 
\begin{equation*}
    \widetilde{Q}_Y(dt, du) = Q_Y(dt, du) - dt \mu_Y(du) 
\end{equation*} 
be the compensated martingale-measure associated to $Q_Y$. 

It follows that, for $f \in \C^1_b(\R_+ \times E, \R)$ and $X \in \{H,W\}$,
\begin{equation*}
    \angles{\zeta^{X|K}_T}{f_T} = M^{X|K}_T(f) + V^{X|K}_T(f),
\end{equation*}
where we define
\begin{equation*}
    M^{X|K}_T(f) = \frac{1}{K_X} \sum_{Y \in \mathcal{S}} \int_0^T \int_{U_Y} \mathcal{I}_Y(t-,u)f^X_{t-,u} \widetilde{Q}_Y(dt,du)
\end{equation*}
and
\begin{equation*}
    V^{X|K}_T(f) = \angles{\zeta^{X|K}_0}{f_0} + \int_0^T \angles{\zeta^{X|K}_t}{\A f_t} dt + \frac{1}{K_X} \sum_{Y \in \mathcal{S}} \int_0^T \int_{U_Y} \mathcal{I}_Y(t,u) f^X_{t,u} \mu_Y(du)dt.
\end{equation*}

\begin{prop}
\label{prop:crochet}
    Let $f \in \C^1_b(\R_+ \times E,\R)$ and $X \in \{H,W\}$. Then $(M^{X|K}_T(f))_{T \geq 0}$  
    is a square integrable martingale. Using the same notations as in Theorem \ref{thm:cvgce}, its quadratic variation is given by 
    \begin{equation*}
        \langle M^{X|K}(f) \rangle_T = \frac{1}{K_X} \int_0^T \angles{\zeta^{X|K}_t}{\mathcal{H}^X_t((f^2_t)^\I - 2 f^\I_t f_t + f^2_t)}dt,
    \end{equation*}
    where for any $t \geq 0$ and $x \in E$, 
    \begin{equation*}
        \mathcal{H}^X_t(x) = \beta_G \frac{I_H(t)}{N_H} \bfs(x) + \lambda_X \bfs(x) \bfi(x) + \lambda_{\overline{X}} \frac{\angles{\zeta^{\overline{X}|K}_t}{\bfs \bfi}}{S_{\overline{X}}(t)} \bfs(x).
    \end{equation*}
\end{prop}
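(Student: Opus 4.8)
The plan is to read $M^{X|K}(f)$ as a sum over $Y \in \mathcal{S}$ of three stochastic integrals against the \emph{independent} compensated Poisson measures $\widetilde{Q}_Y$, and to invoke the classical theory of integration against compensated Poisson random measures (as used in \cite{tranModelesParticulairesStochastiques2006, fournierMicroscopicProbabilisticDescription2004}). First I would check predictability and integrability of the integrands. Each map $(t,u,\omega) \mapsto \mathcal{I}_Y(t-,u) f^X_{t-,u}$ is predictable, being left-continuous and adapted, and it is bounded since $f \in \C^1_b$ gives $|f^X_{t-,u}| \leq 2\inftynorm{f}$ while $\mathcal{I}_Y \leq 1$. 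Integrating the indicators out exactly as in the proof of Proposition \ref{def:zeta}, the total $Y$-infection rate $\int_{U_Y}\mathcal{I}_Y(t,u)\mu_Y(du)$ is deterministically bounded by a constant times $K$, so that for every $T \geq 0$,
\begin{equation*}
\E\left[\sum_{Y \in \mathcal{S}} \int_0^T \int_{U_Y} \left(\mathcal{I}_Y(t,u) f^X_{t,u}\right)^2 \mu_Y(du)\,dt\right] \leq 4\inftynorm{f}^2\, T\, \bigl(\beta_G + \lambda_H \nmax^2 + \lambda_W \nmax^2\bigr)K < \infty.
\end{equation*}
This guarantees that each integral against $\widetilde{Q}_Y$ is a square-integrable martingale, hence so is $M^{X|K}(f)$ as a finite sum.

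Next I would compute the predictable quadratic variation. Because the $Q_Y$ are independent, the three martingales are orthogonal and their angle brackets add with no cross terms. For a single compensated Poisson integral with intensity $dt\,\mu_Y$, the predictable quadratic variation is the integral of the squared integrand against $dt\,\mu_Y$; using $\mathcal{I}_Y^2 = \mathcal{I}_Y$ and that the jump times form a Lebesgue-null set (so $t-$ may be replaced by $t$ inside the $dt$-integral), this yields
\begin{equation*}
\langle M^{X|K}(f) \rangle_T = \frac{1}{K_X^2} \sum_{Y \in \mathcal{S}} \int_0^T \int_{U_Y} \mathcal{I}_Y(t,u)\left(f^X_{t,u}\right)^2 \mu_Y(du)\,dt.
\end{equation*}

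The core of the argument, and the step I expect to be the main bookkeeping obstacle, is the explicit evaluation of each inner integral. For each $Y$ I would integrate the auxiliary coordinates $\theta_i$ against Lebesgue measure, which turns every indicator constraint into the length of the corresponding interval and thus reconstructs the infection rates; then sum over the structure indices $k,\ell$ and integrate $\sigma$ against $\nu$. Integrating the squared jump $\int_{\R_+}\bigl(f_t(\mathfrak{j}(x,\sigma)) - f_t(x)\bigr)^2\nu(d\sigma)$ produces, at the relevant structure type, the function $g_t := (f^2_t)^\I - 2 f^\I_t f_t + f^2_t$, since $f^\I_t(x)=\angles{\nu}{f_t(\mathfrak{j}(x,\cdot))}$ and $(f^2_t)^\I(x)=\angles{\nu}{f^2_t(\mathfrak{j}(x,\cdot))}$. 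The summations over the complementary index are closed using $\sum_\ell s^W_\ell(t) = S(t)$, $\sum_\ell s^W_\ell(t)\,\bfi(x^W_\ell(t)) = K_W \angles{\zeta^{W|K}_t}{\bfs\bfi}$ (and their household analogues), together with the identities $S(t)=K_{\overline{X}}S_{\overline{X}}(t)$ and $I(t)/K = I_X(t)/N_X$ coming from Equation \eqref{eq:def-SI} and Lemma \ref{lemme:shsw}.

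Carrying this out separately for $Y=G$, $Y=X$ and $Y=\overline{X}$ recovers precisely the three summands of $\mathcal{H}^X_t$: the $Y=G$ term, after using $\beta_G I(t)/K = \beta_G I_H(t)/N_H$ and $\sum_\ell s^W_\ell/S=1$, gives $\beta_G \tfrac{I_H(t)}{N_H}\bfs$; the $Y=X$ term gives $\lambda_X \bfs\bfi$; and the $Y=\overline{X}$ term, where only the complementary structure carries the rate while the household/workplace index of the infectee is selected, gives $\lambda_{\overline{X}}\tfrac{\angles{\zeta^{\overline{X}|K}_t}{\bfs\bfi}}{S_{\overline{X}}(t)}\bfs$. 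In each case the remaining sum over the index of $X$-structures is rewritten as $\sum_{k}(\cdots)(x^X_k(t)) = K_X \angles{\zeta^{X|K}_t}{\cdots}$, which converts one factor $1/K_X^2$ into $1/K_X$ and produces the announced
\begin{equation*}
\langle M^{X|K}(f) \rangle_T = \frac{1}{K_X} \int_0^T \angles{\zeta^{X|K}_t}{\mathcal{H}^X_t\,\bigl((f^2_t)^\I - 2 f^\I_t f_t + f^2_t\bigr)}\,dt.
\end{equation*}
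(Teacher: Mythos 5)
Your proposal is correct and follows essentially the same route as the paper's proof: the same decomposition of $M^{X|K}(f)$ over $Y \in \mathcal{S} = \{H,W,G\}$, square integrability via the same second-moment bound on the compensated Poisson integrals, additivity of the brackets by independence of the $Q_Y$, and the same explicit evaluation integrating out the $\theta$-coordinates, summing the structure indices via $\sum_\ell s^W_\ell(t) = S(t)$ and the identities of Lemma \ref{lemme:shsw}, and integrating $\sigma$ against $\nu$ to produce $(f^2_t)^\I - 2 f^\I_t f_t + f^2_t$ and the three summands of $\mathcal{H}^X_t$. Your extra remarks on predictability and on replacing $t-$ by $t$ inside the $dt$-integral are details the paper leaves implicit, but they do not change the argument.
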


\begin{proof}
    Let $f \in \C^1_b(E,\R)$ and $X \in \{H,W\}$. Consider $M^{X|K}_T(f)$, which can be written as
    \begin{equation*}
        M^{X|K}_T(f) = M^{X|K}_{X,T}(f) + M^{X|K}_{\overline{X},T}(f) + M^{X|K}_{G,T}(f),
    \end{equation*}
    where, for $Y \in \mathcal{S}$ and $T \geq 0$,
    \begin{equation*}
        M^{X|K}_{Y,T}(f) = \int_0^T \int_{U_Y} \frac{1}{K_X} \mathcal{I}_Y(t-, u) f^X_{t,u} \widetilde{Q}_Y(dt,du). 
    \end{equation*}
    Suppose that for any $Y \in \mathcal{S}$, 
    \begin{equation*}
       \E\left[ \int_0^T \int_{U_Y} \left(\frac{1}{K_X} \mathcal{I}_Y(t, u) f^X_{t,u}\right)^2 \mu_Y(du) dt\right] < \infty , 
    \end{equation*}
    then for all $Y \in \mathcal{S}$, $(M^{X|K}_{Y,T}(f))_{T\geq 0}$ is as square integrable martingale \cite{meleardStochasticModelsStructured2015}, implying that $(M^{X|K}_T(f))_{T\geq0}$ also is a square integrable martingale. As $Q^{H|K}$, $Q^{W|K}$ and $Q^{G|K}$ are independent, it follows that 
    \begin{equation*}
        \langle M^{X|K}(f) \rangle_T = \sum_{Y \in \mathcal{S}}  \langle M^{X|K}_Y(f) \rangle_T.
    \end{equation*}
    It thus is enough to study $(M^{X|K}_{Y,T}(f))_{T \geq 0}$ for all $Y \in \mathcal{S}$. In the following, we will detail the necessary computations in the case $X = H$, the case $X = W$ being similar.
    
    Consider the case $Y = H$. Start by noticing that $\sum_{\ell = 1}^{K_W} s^W_\ell(t) = K_W S_W(t)$, and that for any $k \in \bbrackets{1}{K_H}$ and $t \in [0,T]$, $s^H_k(t)$ and $i^H_k(t)$ are less then $\nmax$, almost surely. Hence, replacing $S(t)$ by $K_W S_W(t)$ in $\I_H$, 
    \begin{equation*}
    \begin{aligned}
    \label{eq:aux-crochet-bis}
        \E&\left[\langle M^{H|K}_H(f) \rangle_T\right] = \E\left[ \int_0^T \int_{U_H} \left(\frac{1}{K_H} \mathcal{I}_H(t,u)f^H_{t,u}\right)^2 \mu_H(du) dt \right] \\
        & = \E\left[\int_{0}^T \frac{1}{K_H^2} \sum_{k=1}^{K_H} \lambda_H s^H_k(t) i(\tau^H_k(t)) \angles{\nu}{\left(f_t(\mathfrak{j}(x^H_k(t),\cdot)) - f_t(x^H_k(t))\right)^2} dt\right] \\
        &\leq \frac{1}{K_H} \lambda_H (\nmax)^2 4\inftynorm{f}^2 T.
    \end{aligned}
    \end{equation*}
    Since further $K_H \geq K/\nmax$ and $\inftynorm{f}^2 < \infty$, we obtain that 
    \begin{equation}
    \label{eq:aux-crochet}
        \E\left[\langle M^{H|K}_H(f) \rangle_T\right] \leq \frac{4}{K} \lambda_X (\nmax)^3 \inftynorm{f}^2 T < \infty.
    \end{equation}
    Thus $(M^{H|K}_{H,t}(f))_{t \geq 0}$ is a square integrable martingale whose quadratic variation is given by
    \begin{equation*}
    \begin{aligned}
        \langle M^{H|K}_H(f) \rangle_T & = \int_0^T \int_{U_H} \left(\frac{1}{K_H} \mathcal{I}_H(t,u)f^H_{t,u}\right)^2 \mu_H(du) dt \\
        &= \frac{\lambda_H}{K_H} \int_0^T \!\!\! \angles{\zeta^{H|K}_t}{\bfs \bfi ((f^2_t)^\I - 2 f^\I_t f_t + f^2_t)} dt,
    \end{aligned}
    \end{equation*}
    using the computations from Equation \eqref{eq:aux-crochet-bis}.

    Similarly, $(M^{H|K}_{W,T}(f))_{T \geq 0}$ is a square integrable martingale of quadratic variation
    \begin{equation*}
        \langle M^{H|K}_W(f) \rangle_T =  \lambda_W \int_0^T  \frac{\angles{\zeta^{W|K}_t}{\bfs \bfi}}{S_W(t)} \angles{\zeta^{H|K}_t}{\bfs((f^2_t)^\I - 2 f^\I_t f_t + f^2_t)} dt.
    \end{equation*}

    Further, for the case $Y = G$, let us use the equalities $S(t) = K_H S_H(t)$ and $I(t) = K_H I_H(t)$. As $I(t)/K = I_H(t)/N_H \leq 1$  almost surely, we obtain that 
    \begin{equation*}
        \E\left[\langle M^{H|K}_G(f) \rangle_T\right] = \E\left[ \int_0^T \int_{U_G} \left(\frac{1}{K_H} \mathcal{I}_G(t, u)f^H_{t,u} \right)^2 \mu_G(du) dt\right] \leq \frac{4}{K} \beta_G (\nmax)^2 \inftynorm{f}^2 T.
    \end{equation*}
    As before, $(M^{H|K}_{G,T}(f))_{T \geq 0}$ thus is a square integrable martingale of quadratic variation given by 
    \begin{equation*}
        \langle M^{H|K}_G(f) \rangle_T = \frac{1}{K_H} \beta_G \int_0^T \frac{I_H(t)}{N_H} \angles{\zeta^{H|K}_t}{\bfs((f^2_t)^\I - 2 f^\I_t f_t + f^2_t)}dt.
    \end{equation*}
    This yields the desired result for $(M^{H|K}_T(f))_{T \geq 0}$. Proceeding in the same way for $(M^{W|K}_T(f))_{T \geq 0}$ concludes the proof.
\end{proof}

We are now ready to focus on the tightness of $(\zeta^K)_{K \geq 1}$, endowing $\M_F(E)$ with the vague topology $v$ as a first step.

\begin{prop}
\label{prop:tightness}
    Under the assumptions of Theorem \ref{thm:cvgce}, the sequence $(\zeta^K)_{K \geq 1}$ is tight in $\D\left(\R_+, (\M_F(E), v) \right)^2$.
\end{prop}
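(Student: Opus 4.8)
The plan is to establish tightness of $(\zeta^K)_{K \geq 1}$ in $\D(\R_+, (\M_F(E), v))^2$ by applying a standard criterion for tightness in the Skorokhod space of measure-valued processes, namely the one based on testing against a countable family of functions together with a compact containment condition. Concretely, I would rely on the criterion (as in Roelly's or M\'el\'eard-Roelly's theorem, and as used in \cite{tranModelesParticulairesStochastiques2006}) which reduces tightness of $(\zeta^{X|K})_{K \geq 1}$ in $\D(\R_+, (\M_F(E), v))$ to: (i) tightness of the real-valued processes $(\angles{\zeta^{X|K}_\cdot}{f})_{K \geq 1}$ in $\D(\R_+, \R)$ for every $f$ in a suitable dense subset of $\C^1_b(\R_+ \times E, \R)$ (for the vague topology, one takes $f$ with compact support, or $f \equiv 1$ for mass control); and (ii) the uniform mass bound $\angles{\zeta^{X|K}_t}{1} = 1$, which holds trivially here since $\zeta^{X|K}_t \in \M_{P,1}(E)$ for all $t$. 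The first step is thus to invoke this criterion and to reduce the problem to real-valued tightness.

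Next I would establish tightness of each real-valued process $(\angles{\zeta^{X|K}_\cdot}{f})_{K \geq 1}$ using the Aldous-Rebolledo criterion, which is natural given the semimartingale decomposition $\angles{\zeta^{X|K}_T}{f_T} = M^{X|K}_T(f) + V^{X|K}_T(f)$ already obtained in Proposition \ref{prop:zeta(lip)}. The Aldous-Rebolledo criterion requires controlling the increments of the finite-variation part $V^{X|K}$ and the quadratic variation $\langle M^{X|K}(f) \rangle$ over small random time intervals $[S, S+\delta]$ for stopping times $S$. For the martingale part, Proposition \ref{prop:crochet} gives the explicit quadratic variation, and the bound $\E[\langle M^{X|K}(f) \rangle_T] = O(1/K)$ obtained there (see the estimates leading to Equation \eqref{eq:aux-crochet}) shows that the martingale contribution vanishes uniformly; in particular, for any stopping times $S \leq S + \delta \leq T$, one gets $\E[\langle M^{X|K}(f)\rangle_{S+\delta} - \langle M^{X|K}(f)\rangle_S] \leq C_f \delta / K$, which goes to zero. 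For the finite-variation part, I would use the explicit form of $V^{X|K}_T(f)$: the drift is a time-integral of terms of the form $\angles{\zeta^{X|K}_t}{\A f_t}$ and the infection contributions $\frac{1}{K_X}\int_{U_Y} \I_Y(t,u) f^X_{t,u}\, \mu_Y(du)$. Since $f, \A f$ are bounded and the infection rates integrate (using $i(\tau) \leq \nmax$, $s \leq \nmax$, and the probability normalisations by $S(t)$) to a quantity bounded by $C_f (\lambda_H \nmax^2 + \lambda_W \nmax^2 + \beta_G)$ uniformly in $K$ and $t$, the increment $|V^{X|K}_{S+\delta}(f) - V^{X|K}_S(f)|$ is bounded deterministically by $C_f \delta$. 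This uniform Lipschitz-in-time control yields the Aldous condition immediately.

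The step I expect to be most delicate is verifying carefully that the drift terms in $V^{X|K}$ are genuinely bounded uniformly in $K$, because they involve the ratios $s^{\overline X}_\ell(t)/S(t)$ and $s^H_k(t)/S(t)$ coming from $\I_Y$, which formally could blow up when $S(t)$ becomes small. The key observation here is that the counting structure forces $\sum_\ell s^{\overline X}_\ell(t) = S(t)$ exactly (by Lemma \ref{lemme:shsw} and the definition \eqref{eq:def-SI}), so that when one integrates $\I_H$ against $\mu_{\#}(d\ell)$ the sum $\frac{1}{S(t)}\sum_\ell s^W_\ell(t)$ telescopes to $1$; thus the apparently singular ratios always appear summed into a probability and never individually cause divergence. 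One must also handle the event $\{S(t) = 0\}$, on which no infections occur and the indicators vanish, so the integrand is well-defined by convention. Making this bookkeeping rigorous — tracking that the $\theta_2, \theta_3$ constraints in $\I_Y$ integrate to the correct probabilities and recombine with the $\theta_1$ infection-rate constraint — is the main technical obstacle, after which the Aldous-Rebolledo verification and the appeal to the tightness criterion are routine. Once both (i) and (ii) are checked for $X \in \{H, W\}$, tightness of the pair $(\zeta^K)_{K\geq 1}$ in the product space $\D(\R_+, (\M_F(E), v))^2$ follows, since tightness of each coordinate implies tightness of the couple.
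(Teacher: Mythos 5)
Your proposal is correct and follows essentially the same route as the paper: reduction to real-valued tightness of $(\angles{\zeta^{X|K}}{f})_{K \geq 1}$ via Roelly's criterion (Theorem 2.1 of \cite{roelly-coppolettaCriterionConvergenceMeasure1986}) for $f$ in a dense subset of $\C_0(E,\R)$, followed by the Aldous--Rebolledo criterion applied to the decomposition $M^{X|K}(f) + V^{X|K}(f)$, using the $O(1/K)$ quadratic-variation bound and the uniform drift bound from Proposition \ref{prop:crochet}. The technical point you flag about the ratios $s^X_k(t)/S(t)$ is handled in the paper exactly as you describe (the normalisations telescope to probabilities, via Lemma \ref{lemme:shsw}), already within the proof of Proposition \ref{prop:crochet}.
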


The proof relies on the fact that in order to establish tightness of $(\zeta^K)_{K \geq 1}$, it is enough to show that for any $X \in \{H,W\}$, $\left(\angles{\zeta^{X|K}}{f} \right)_{K\geq 1}$ is tight for a large enough set of test functions $f$ \cite{roelly-coppolettaCriterionConvergenceMeasure1986}. This in turn is ensured using the Aldous \cite{aldousStoppingTimesTightness1978} and Rebolledo \cite{joffeWeakConvergenceSequences1986} criteria, whose application is straightforward thanks to the upper bounds established in the previous proof. 

\begin{proof}
    Once more, we will proceed component-wise and show that $\left(\zeta^{H|K}\right)_{K\geq 1}$ and $\left(\zeta^{W|K}\right)_{K\geq 1}$ are both tight in $\D\left(\R_+, (\M_F(E),v)\right)$. 
    
    Let us focus on $\left(\zeta^{H|K}\right)_{K\geq 1}$. According to Theorem 2.1 of \cite{roelly-coppolettaCriterionConvergenceMeasure1986}, it is sufficient to show that for any function $f$ belonging to a dense subset of 
    \begin{equation*}
        \C_0(E, \R) = \left\{ f: E \to \R \text{ continuous s.t.} \lim_{\inftynorm{x} \to \infty} |f(x)| = 0 \right\},
    \end{equation*}
    the sequence $\left(\angles{\zeta^{H|K}}{f} \right)_{K\geq 1}$ is tight in $\D(\R_+, \R)$. 
    Notice that by density of the set of smooth compactly supported functions in $C_0(\R^{\nmax})$ endowed with the uniform norm, it follows that $\C_0(E, \R) \cap \C^1_b(E, \R)$ is also dense in $\C_0(E, \R)$ endowed with the uniform norm. Thus, let us consider $f \in \C_0(E, \R) \cap \C^1_b(E, \R)$. 

    According to the Aldous \cite{aldousStoppingTimesTightness1978} and Rebolledo \cite{joffeWeakConvergenceSequences1986} criteria, in order to prove the tightness of $(\angles{\zeta^{H|K}}{f})_{K\geq 1}$, it is enough to show that:
    \begin{enumerate}[label= (\roman*)]
        \item For any $t$ belonging to a dense subset $\mathcal{T}$ of $\R^+$, $\left(\langle M^{H|K}(f) \rangle_t \right)_{K \geq 0}$ and $(V^{H|K}_t(f))_{K \geq 0}$ are tight in $\R$.
        \item For any $T \geq 0$, for any $\epsilon , \alpha > 0$,  there exist $\delta > 0$ and $K_0 \in \N$ such that for any two sequences of stopping times $(S_K)_{K \geq 1}$ and $(T_K)_{K \geq 1}$ satisfying $S_K \leq T_K \leq T$ for all integers $K$, 
        \begin{equation*}
        \begin{aligned}
            \sup_{K \geq K_0} \P\left(|\langle M^{H|K}(f) \rangle_{S_K} - \langle M^{H|K}(f) \rangle_{T_K}| \geq \alpha, T_K \leq S_K + \delta \right) &\leq \epsilon \\
            \text{and } \sup_{K \geq K_0} \P\left(|V^{H|K}_{S_K}(f) - V^{H|K}_{T_K}(f)| \geq \alpha, T_K \leq S_K + \delta \right) &\leq \epsilon.
        \end{aligned}
        \end{equation*}
    \end{enumerate}

    Notice that, in order to establish (i), it is enough to show that for any $t \geq 0$, 
    \begin{equation*}
        \sup_{K \geq 1} \E\left[ |\langle M^{H|K}(f) \rangle_t| \right] < \infty \text{ and } \sup_{K \geq 1} \E\left[ \left|V^{H|K}_t(f)\right| \right] < \infty.
    \end{equation*}
    Recalling that $C = 2 \nmax \left(\lambda_H \nmax + \lambda_W \nmax + \beta_G \right)$, it follows from Equation \eqref{eq:aux-crochet} that
    \begin{equation*}
        \E[|\langle M^{H|K}(f) \rangle_t|] \leq \frac{1}{K} 2 \nmax C \inftynorm{f}^2 t.
    \end{equation*}
    Similar computations yield that 
    \begin{equation*}
        \E[|V^{H|K}_t(f)|] \leq \inftynorm{f} + \inftynorm{\A f} + C \inftynorm{f} t.
    \end{equation*}
    As $f \in \C^1_b(E,\R)$, this implies that (i) holds.

    It remains to check (ii). Let $\epsilon, \alpha > 0$, and consider two sequences of stopping times $(S_K)_{K \geq 1}$ and $(T_K)_{K \geq 1}$ satisfying $S_K \leq T_K \leq T$ for all integers $K$. 
    As previously, using Equation \eqref{eq:aux-crochet}, we obtain the following upper bound:
    {\small
    \begin{equation*}
    \begin{aligned}
         \E\left[|\langle M^{H|K}(f) \rangle_{S_K} \! - \langle M^{H|K}(f) \rangle_{T_K}|\Big| T_K \leq S_K +\!\delta \right] & \leq \E\left[\!\int_{S_K}^{T_K} \!\!\! dt \Big| T_K \leq S_K +\! \delta\right] \frac{2}{K} \nmax C \inftynorm{f}^2 \\
         &\leq  \frac{\delta}{K} 2 \nmax C \inftynorm{f}^2.
    \end{aligned}
    \end{equation*}
    }
    Hence, using conditional Markov's inequality, 
    \begin{equation}
    \label{eq:aux3}
        \P\left(|\langle M^{H|K}(f) \rangle_{S_K} - \langle M^{H|K}(f) \rangle_{T_K}| \geq \alpha, T_K \leq S_K + \delta \right) \leq \frac{\delta}{\alpha K} 2 \nmax C \inftynorm{f}^2.
    \end{equation}

    Proceeding similarly, we also obtain that 
    \begin{equation}
    \label{eq:aux4}
        \P\left(|V^{H|K}_{S_K}(f) - V^{H|K}_{T_K}(f)| \geq \alpha, T_K \leq S_K + \delta \right) \leq \frac{\delta}{\alpha}  \left(\inftynorm{\A f} + C \inftynorm{f}\right).
    \end{equation}

    Equations \eqref{eq:aux3} and \eqref{eq:aux4} imply the existence of $\delta$ and $K_0$ such that (ii) is satisfied. Naturally, $\zeta^{W|K}$ can be handled analogously. This concludes the proof.
\end{proof}

Finally, this result on the tightness of $(\zeta^K)_{K \geq 1}$ in $\D(\R_+, (\M_F(E), v))^2$ lets us establish the main result of this subsection. 

\begin{prop}
\label{prop:tightness-weak}
    Under the assumptions of Theorem \ref{thm:cvgce}, the sequence $(\zeta^K)_{K \geq 1}$ is tight in $\D(\R_+, (\M_F(E), w))^2$.
\end{prop}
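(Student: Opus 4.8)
The plan is to upgrade the vague tightness obtained in Proposition \ref{prop:tightness} to tightness in the weak topology. The only discrepancy between the vague topology $v$ and the weak topology $w$ on $\M_F(E)$ concerns the behaviour of mass near infinity: $w$ tests against all of $\C_b(E)$ (in particular against the constant function $\ind$), whereas $v$ only tests against $\C_0(E)$. Since each $\zeta^{X|K}_t$ is a probability measure, the total mass $\angles{\zeta^{X|K}_t}{\ind} \equiv 1$ is constant, so the total-mass process is trivially tight in $\D(\R_+,\R)$; hence, by a standard criterion in the spirit of \cite{roelly-coppolettaCriterionConvergenceMeasure1986, meleardStochasticModelsStructured2015}, it suffices to complement Proposition \ref{prop:tightness} with a uniform compact-containment estimate ruling out escape of mass to infinity: for every $X \in \{H,W\}$, $T > 0$ and $\epsilon > 0$, there should exist a compact set $\mathcal{K} \subset E$ with
\[
    \sup_{K \geq 1} \E\Big[ \sup_{0 \leq t \leq T} \zeta^{X|K}_t(E \setminus \mathcal{K}) \Big] \leq \epsilon. \quad (\star)
\]
Via Markov's inequality, $(\star)$ yields the compact-containment condition for the process, which together with the vague tightness gives tightness in $\D(\R_+, (\M_F(E), w))^2$.

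To establish $(\star)$, I would exploit that $\bbrackets{1}{\nmax}$ and $\bbrackets{0}{\nmax}$ are finite, so compact subsets of $E$ are exactly those of the form $\mathcal{K}_N = \{(n,s,\tau) \in E : \|\tau\|_\infty \leq N\}$, and $\zeta^{X|K}_t(E \setminus \mathcal{K}_N) = \frac{1}{K_X} \sum_{k=1}^{K_X} \setind{\|\tau^X_k(t)\|_\infty > N}$. I would then split each occupied coordinate $\tau^X_{k,j}(t)$ according to whether its slot was already occupied at time $0$ or was created by an infection in $(0,t]$. For a slot present at time $0$, the dynamics $\frac{d}{dt}\tau^X_{k,j} = -\setind{j \leq n-s}$ give $\tau^X_{k,j}(t) = \tau^X_{k,j}(0) - t$, so $\{|\tau^X_{k,j}(t)| > N\} = \{|\tau^X_{k,j}(0) - t| > N\}$, and the corresponding contribution is controlled, uniformly in $K$ and with the supremum over $t \in [0,T]$ inside the expectation, precisely by Assumption \ref{hyp:zetaK0}(i), which vanishes as $N \to \infty$.

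For a slot created by an infection occurring at some time $s \in (0,t]$, the remaining period equals $\sigma - (t-s)$ with $\sigma$ drawn from $\nu$; since $\sigma > 0$ we have $\tau^X_{k,j}(t) > -(t-s) \geq -T$, so for $N \geq T$ the event $\{|\tau^X_{k,j}(t)| > N\}$ forces $\sigma > N$. Consequently $\sup_{t \leq T}$ of the corresponding count is dominated by the number of infections occurring in $[0,T]$ whose associated infectious period exceeds $N$. Using that the marks $\sigma$ are i.i.d.\ of law $\nu$ and independent of the infection times, and that the normalized number of infections per structure is at most $\nmax$ (bounded by the initial number of susceptibles), this contribution has expectation at most $\nmax\, \nu((N,\infty))$, which tends to $0$ as $N \to \infty$ because $\nu$ is a probability measure. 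Combining the two estimates yields $(\star)$.

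The main obstacle is this second contribution: one must handle the slots created dynamically during $[0,T]$, for which the key points are the elementary but crucial observation that positive infectious periods keep these coordinates bounded below by $-T$ (so escape can only occur towards $+\infty$ and is governed by the tail of $\nu$), together with a clean uniform bound on the normalized number of infections, allowing one to pass the tail estimate through the expectation by conditioning on the infection dynamics and using the independence of the marks. The contribution of the initial slots, by contrast, is exactly what Assumption \ref{hyp:zetaK0}(i) is designed to control. One should finally take care to invoke the vague-to-weak upgrade in a form compatible with the Skorokhod functional setting, which is why $(\star)$ is stated with the supremum over $t \leq T$ placed inside the expectation.
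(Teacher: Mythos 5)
Your proof is correct, and its analytic core coincides exactly with the paper's: the paper also verifies a uniform no-escape-of-mass estimate $\lim_{N}\limsup_K \E[\sup_{t\le T}\angles{\zeta^{X|K}_t}{\phi_N}]=0$ (with $\phi_N$ a smooth version of your indicator $\setind{\inftynorm{\tau(\cdot)}\ge N}$) by the same two-way split you propose — initial slots evolve as $\tau^X_{k,j}(0)-t$ and are handled by Assumption \ref{hyp:zetaK0}(i), while a slot created at time $z$ with mark $\sigma>0$ satisfies $\tau(t)=\sigma-(t-z)>-T$, so for $N\ge T$ only the tail event $\{\sigma>N\}$ contributes, bounded via the Poisson intensity (the paper gets $\tfrac{CT}{2}\nmax\,\nu((N-1,\infty))$; your mark-independence argument even gives the cleaner, $T$-free bound $\nmax\,\nu((N,\infty))$). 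Where you genuinely diverge is the vague-to-weak packaging. The paper invokes Theorem 1.1.8 of \cite{tranBalladeForetsAleatoires2014}, whose third hypothesis forces it to prove that every limiting value lies in $\C([0,T],(\M_F(E),w))$; this occupies the second half of its proof (jumps of size at most $1/K_H$ give continuity for the vague topology, then truncations $\phi_{N,M}$ and an $\epsilon/3$ argument upgrade this to the weak topology). You instead route through a compact-containment estimate $(\star)$ plus constancy of total mass, in the spirit of Jakubowski's criterion: since $\C_0(E,\R)$ functions plus constants form an addition-closed, point-separating family of weakly continuous linear statistics whose tightness is already known from Proposition \ref{prop:tightness}, and $(\star)$ (via Markov, a summable choice of levels $N_j$, and Prohorov, using that $\mu\mapsto\mu(E\setminus\mathcal{K}_{N_j})$ is lower semicontinuous so the containment set is weakly closed) gives process-level compact containment in $(\M_F(E),w)$, tightness follows without ever discussing continuity of limit points. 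Two small cautions: compact subsets of $E$ are those \emph{contained in} some $\mathcal{K}_N$, not exactly the $\mathcal{K}_N$; and if you had instead appealed to the M\'el\'eard--Roelly \emph{convergence} criterion (vague convergence plus convergence of total masses), you would still need to rule out mass loss of vague limit points along subsequences, which is precisely the transfer-to-the-limit argument the paper performs — so stating your upgrade through a tightness criterion with compact containment, as you did, is the right way to keep the shortcut legitimate. Both approaches buy something: the paper's route delivers, as a by-product used later in Proposition \ref{prop:idlimit}, the almost-sure continuity of limit points in $(\M_F(E),w)$, which your route would have to re-derive; yours is shorter and gives a tail bound uniform in $T$.
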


\begin{proof}
    Let $X \in \{H,W\}$. Tightness in $\D(\R_+, (\M_F(E), w))$ of $(\zeta^{X|K})_{K \geq 1}$ will be shown using Theorem 1.1.8 from \cite{tranBalladeForetsAleatoires2014}, which we state in our setting for the sake of completeness. Let $\Phi : z \in \R \mapsto 6z^2 - 15z^4 + 10z^3$ and for $N \geq 1$, define smooth approximations of $x \in  E \mapsto \setind{\inftynorm{\tau(x)} \geq N}$ by:
    \begin{equation*}
        \forall x \in E, \forall N \geq 1, \phi_N(x) = \Phi(0 \vee (\inftynorm{\tau(x)} - (N-1)) \wedge 1 ). 
    \end{equation*}
    Then in order to ensure the tightness of $(\zeta^{X|K})_{K \geq 1}$ in $\D(\R_+, (\M_F(E), w))$, it is sufficient to show that for any $T \geq 0$, the following conditions hold: 
    \begin{enumerate}[label= (\roman*)]
        \item \label{hyp1} There exists a family of functions $F$ which is dense in $\C_0(E, \R)$ and stable under addition, such that for any $f \in F \cup \{x \in E \mapsto 1\}$, the sequence $(\angles{\zeta^{X|K}}{f})_{K \geq 1}$ is tight in $\D(\R_+, \R)$. 
        \item \label{hyp2}
        \begin{equation*}
            \lim_{N \to \infty} \limsup_{K \to \infty} \E[\sup_{t \leq T} \; \angles{\zeta^{X|K}_t}{\phi_N}] = 0.
        \end{equation*}
        \item \label{hyp3} Any limiting value of $(\zeta^{X|K})_{K \geq 1}$, if it exists, belongs to $\C([0,T], (\M_F(E),w))$.
    \end{enumerate}
    The proof hence consists in checking those assumptions. Let $T \geq 0$, and consider $X = H$, as the case $X = W$ can be treated similarly. We may see that \ref{hyp1} is satisfied, as we have shown in the proof of Proposition \ref{prop:tightness} that for any $f \in \C_0(E, \R) \cap \C^1_b(E, \R)$, $(\angles{\zeta^{H|K}}{f})_{K \geq 1}$ is tight, and further for any $K \geq 1$, for any $T \geq 0$, $\angles{\zeta^{H|K}_T}{1} = 1$ almost surely.

    Let us now turn our attention to \ref{hyp2}. Start by noticing that for any $N \geq 1$ and $x \in E$, 
    \begin{equation*}
        \phi_N(x) \leq \setind{\inftynorm{\tau(x)} \geq {N-1}} \leq f_{N-1}(x) \coloneqq \sum_{i = 1}^{\nmax} f_{N-1, i}(x)
    \end{equation*}
    where $f_{N-1,i}(x) = \setind{\bfn(x) - \bfs(x) \geq i, \; |\tau_i(x)| \geq N - 1}$.

    Let $t \in [0,T]$. For any $N - 1 \geq t$, $x \in E$, $z \in [0,t]$ and $\sigma \geq 0$, it holds by definition that 
    {\small
    \begin{equation*}
        f_{N-1,i}(\Psi(\mathfrak{j}(x, \sigma), t, z)) - f_{N-1,i}(\Psi(x, t, z)) = \setind{\bfn(x) - \bfs(x) = i-1, \; |\sigma - (t-z)| > N-1} \leq \setind{\sigma > N-1}.
    \end{equation*}
    }
    Hence, using Proposition \ref{def:zeta} and the above upper bounds, it follows that almost surely,
    {\small
    \begin{equation*}
            \angles{\zeta^{H|K}_t}{\phi_N} \leq \frac{1}{K_H} \sum_{k=1}^{K_H} f_{N-1}(\Psi(x^H_k(0), t, 0))
            + \frac{\nmax}{K_H} \sum_{Y \in \mathcal{S}} \int_0^t \! \int_{U_Y} \!\I_Y(z-,u) \setind{\sigma > N} Q^K_Y(dz, du).
    \end{equation*}
    }
    Defining as previously $C = 2 \nmax \left(\lambda_H \nmax + \lambda_W \nmax + \beta_G \right)$, this leads to the following upper bound:
    \begin{equation*}
        \E[\sup_{t \leq T} \angles{\zeta^{H|K}_t}{\phi_N}] \leq \E[\sup_{t \leq T} \frac{1}{K_H} \sum_{k=1}^{K_H} f_{N-1}(\Psi(x^H_k(0), t, 0))] + \frac{CT}{2} \nmax \nu((N-1, +\infty)).
    \end{equation*}
    As a consequence, Assumption \ref{hyp:zetaK0} (i) ensures that \ref{hyp2} is satisfied. Notice that this assumption could actually be a little bit relaxed here, as it would be enough if the supremum over $K$ were replaced by the limit superior over $K \to \infty$. 

    In order to check that condition \ref{hyp3} holds, we will follow the arguments presented in \cite{jourdainLevyFlightsEvolutionary2012}. Suppose that $\eta^H$ is a limiting value of $(\zeta^{H|K})_{K \geq 1}$. By definition, 
    \begin{equation*}
        \sup_{t \in [0,T]} \sup_{f \in L^\infty, \inftynorm{f} \leq 1} |\angles{\zeta^{H|K}_t}{f} - \angles{\zeta^{H|K}_{t-}}{f}| \leq \frac{1}{K_H}.
    \end{equation*}
    As the application $\mu \mapsto \sup_{t \in [0,T]} |\angles{\mu_t}{f} - \angles{\mu_{t-}}{f}|$ is continuous on $\D([0,T], (\M_F(E), v))$ for any $f$ in a measure-determining countable set, it follows that $\eta^H \in \C([0,T], (\M_F(E), v))$. 

    Let us now introduce $\phi_{N,M} = \phi_N(1 - \phi_M)$, which serves as a smooth and compactly supported approximation of $x \in E \mapsto \setind{N \leq \inftynorm{\tau(x)} \leq M}$, for $N \leq M$. As, on the one hand, $\mu \mapsto \sup_{t \in [0,T]} \angles{\mu_t}{\phi_{N,M}}$ is continuous on $\D([0,T], (\M_F(E),v))$, and on the other hand, for any $K \geq 1$, $\sup_{t \in [0,T]} \angles{\zeta^{X|K}_t}{\phi_{N,M}} \leq 1$, it follows that:
    \begin{equation*}
        \E[\sup_{t \in [0,T]} \angles{\eta^H_t}{\phi_{N,M}}] = \lim_{K \to \infty} \E[\sup_{t \in [0,T]} \angles{\zeta^{X|K}_t}{\phi_{N,M}}] \leq \limsup_{K \to \infty} \E[\sup_{t \in [0,T]} \angles{\zeta^{X|K}_t}{\phi_{N}}].
    \end{equation*}
    Letting $M$ go to infinity in the left hand side, dominated convergence ensures that 
    \begin{equation*}
        \E[\sup_{t \in [0,T]} \angles{\eta^H_t}{\phi_{N}}] \leq \limsup_{K \to \infty} \E[\sup_{t \in [0,T]} \angles{\zeta^{X|K}_t}{\phi_{N}}] \xrightarrow[N \to \infty]{} 0,
    \end{equation*}
    where the convergence of the right hand side is achieved as in the proof of \ref{hyp2}. In particular, it thus is possible to extract a subsequence from $(\sup_{t \in [0,T]} \angles{\eta^H_t}{\phi_{N}})_N$ which converges almost surely to zero. This implies that for any $\epsilon$, there exists $N$ such that almost surely, 
    \begin{equation*}
        1 - \sup_{t \in [0,T]} \angles{\eta^H_t}{\setind{\inftynorm{\tau(\cdot)} \leq N}} \leq \sup_{t \in [0,T]} \angles{\eta^H_t}{\phi_{N}} < \epsilon
    \end{equation*} 
    Thus $(\eta^H_t)_{t \in [0,T]}$ is almost surely tight. 

    Let $g \in \C_b(E)$, and let $g_N = g(1 - \phi_N)$. It then holds that for $h$ small so that $t+h \in [0,T]$,
    \begin{equation*}
        |\angles{\eta^H_{t+h}}{g} - \angles{\eta^H_t}{g}| \leq |\angles{\eta^H_{t+h}}{g - g_N}| + |\angles{\eta^H_{t+h}}{g_N} - \angles{\eta^H_t}{g_N}| + |\angles{\eta^H_t}{g - g^N}|.
    \end{equation*}
    Let $\epsilon > 0$. As $|g - g_N| \leq \inftynorm{g} \phi_N$, there exists $N_0$ such that $\sup_{t \in [0,T]} \angles{\eta^H_t}{g-g_{N_0}} < \epsilon/3$. Further, as $\eta^H \in \C([0,T],(\M_F(E),v))$, for $h$ small enough, $|\angles{\eta^H_{t+h}}{g_{N_0}} - \angles{\eta^H_t}{g_{N_0}}| < \epsilon/3$. This allows to conclude that $\eta^H \in \C([0,T],(\M_F(E),w))$, establishing \ref{hyp3} and finally tightness of $\eta^H$ in $\D(\R_+, (\M_F(E),w))$.
\end{proof}

\subsubsection{Identification of the limiting values of \texorpdfstring{$(\zeta^K)_{K \geq 1}$}{the individual-based process}}

The tightness of $(\zeta^K)_{K \geq 1}$ in the space $\D(\R_+, \M_F(E), w))^2$ ensures that from any subsequence of $(\zeta^K)_{K \geq 1}$, one may extract a subsubsequence which converges in this space. The limits of these subsubsequences may be characterized as follows.

\begin{prop}
\label{prop:idlimit}
    Under the assumptions of Theorem \ref{thm:cvgce}, all limiting values of $(\zeta^K)_{K \geq 1}$ in $\D\left(\R_+, (\M_F(E), w) \right)^2$ are continuous with regard to the total variation norm, and solutions of Equation \eqref{eq:defeta}.
\end{prop}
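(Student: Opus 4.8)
The plan is to identify the limit by the standard martingale route of \cite{fournierMicroscopicProbabilisticDescription2004, tranModelesParticulairesStochastiques2006}. Fix $X$ and a subsequence along which $\zeta^K$ converges in law to some $\eta$; by Skorokhod's representation theorem I may assume this convergence is almost sure in $\D(\R_+,(\M_F(E),w))^2$ on a common probability space. Since $\mu \mapsto \angles{\mu}{1}$ is continuous on $(\M_F(E),w)$ and $\angles{\zeta^{X|K}_t}{1}=1$, the total mass is preserved in the limit, so each $\eta^X_t$ lies in $\M_1(E)$ and $\eta_t\in\mathfrak{M}_1$. For $f\in\C^1_b(\R_+\times E,\R)$ and $T\ge0$, let $\Lambda^X_{f,T}(\mu)$ be the quantity obtained by moving every term of \eqref{eq:defeta} to one side, so that $\mu$ solves \eqref{eq:defeta} iff $\Lambda^X_{f,T}(\mu)=0$ for all $f,X,T$. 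Integrating each indicator $\I_Y$ against $\mu_Y$ in the compensator contained in $V^{X|K}$ and using the identities $S(t)=K_{\overline X}S_{\overline X}(t)$ and $I(t)/K=I_H(t)/N_H$ from \eqref{eq:def-SI}, a direct computation (summing out the $\ell$- resp. $k$-variable) shows that this compensator reproduces exactly the three drift terms of \eqref{eq:defeta} with $\zeta^{X|K}$ in place of $\eta^X$; consequently $\Lambda^X_{f,T}(\zeta^K)=M^{X|K}_T(f)$. By Proposition \ref{prop:crochet} and the bound \eqref{eq:aux-crochet}, $\E[(M^{X|K}_T(f))^2]=\E[\langle M^{X|K}(f)\rangle_T]=O(1/K)\to 0$, so $\Lambda^X_{f,T}(\zeta^K)\to 0$ in $L^2$.

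It remains to pass to the limit in $\Lambda^X_{f,T}(\zeta^K)$. Since $\eta$ is continuous (Proposition \ref{prop:tightness-weak}), almost sure convergence in $\D$ is locally uniform and yields $\zeta^{K}_t\to\eta_t$ weakly in $\M_F(E)$ for every $t$. All integrands appearing in \eqref{eq:defeta} except those built from $\bfi$ — namely $f_T,f_0,\A f_t,f^\I_t,f_t$ and $\bfs$ — are bounded and continuous on $E$ (for fixed discrete coordinates $f^\I_t$ is continuous in $\tau$ by dominated convergence), so the corresponding terms converge, the time integrals by dominated convergence using the uniform bounds of the type \eqref{eq:ineq-eta}. \emph{The one genuine obstacle is} $\bfi(x)=\sum_{k}\setind{\tau_k>0}$, \emph{which is discontinuous precisely on} $D=\{(n,s,\tau)\in E:\ \tau_i=0\text{ for some }i\le n-s\}$; this is the difficulty announced at the start of Section \ref{sec:proofs}, and the step I expect to be the crux.

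To overcome it I would show that, almost surely, $\eta^X_t(D)=0$ for Lebesgue-almost every $t$, so that $\bfi$ is $\eta^X_t$-a.e.\ continuous and the extended continuous mapping theorem applies. Fix $\epsilon>0$ and a continuous $g_\epsilon$ with $\ind_D\le g_\epsilon\le \ind_{D_\epsilon}$, where $D_\epsilon=\{(n,s,\tau):\ |\tau_i|\le\epsilon\ \text{for some }i\le n-s\}$. Using representation \eqref{eq:def-zeta} I would bound $\E[\int_0^T\angles{\zeta^{X|K}_t}{g_\epsilon}dt]$ by splitting previously infected individuals according to whether they were infectious at time $0$ or contaminated during $[0,T]$. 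For the former the $i$-th coordinate satisfies $\tau^X_{k,i}(t)=\tau^X_{k,i}(0)-t$, so $|\tau^X_{k,i}(t)|\le\epsilon$ is controlled, uniformly in $K$ and with vanishing bound as $\epsilon\to0$, by Assumption \ref{hyp:zetaK0}(ii). For the latter each fresh infectious period is drawn from $\nu$, and an individual infected at time $r$ has remaining period $\sigma-(t-r)$ at time $t$; as $\int_0^T\setind{|\sigma-(t-r)|\le\epsilon}\,dt\le 2\epsilon$ for every $r,\sigma$, Fubini shows that the expected time-integrated mass near $D$ contributed by freshly infected individuals is $O(\epsilon)$ uniformly in $K$. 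Passing to the limit by the portmanteau theorem ($g_\epsilon$ being continuous and bounded) and then letting $\epsilon\to0$ yields $\E[\int_0^T\eta^X_t(D)\,dt]=0$, hence $\eta^X_t(D)=0$ for a.e.\ $t$. This is exactly where the discontinuity of the infectiousness is reconciled with the weak topology.

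With $\eta^X_t(D)=0$ in hand, $\bfi$ and the products $\bfs\bfi(f^\I_t-f_t)$ are $\eta^X_t$-a.e.\ continuous and bounded, so $\angles{\zeta^{X|K}_t}{\bfi\,h}\to\angles{\eta^X_t}{\bfi\,h}$ for a.e.\ $t$; dominated convergence then carries this through the time integrals, and likewise for $\angles{\zeta^{\overline X|K}_t}{\bfs\bfi}$ and $\angles{\zeta^{H|K}_t}{\bfi}$. For the denominators, $\angles{\eta^H_0}{\bfn}=m_H\ge 1>0$ in the global term, while the cross term involves $s_{\overline X}(t)=\angles{\eta^{\overline X}_t}{\bfs}$, which stays bounded below by a positive constant on $[0,T]$ (susceptibles are not exhausted in finite time), so the ratio $\angles{\eta^{\overline X}_t}{\bfs\bfi}/\angles{\eta^{\overline X}_t}{\bfs}$ passes to the limit, its a priori bound $\le\nmax$ providing the needed domination. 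Combining everything gives $\Lambda^X_{f,T}(\eta)=\lim_K\Lambda^X_{f,T}(\zeta^K)=0$ for all $f,X$ and all $T$ in a countable dense set, hence for all $T$ by continuity of $\eta$, so every limiting value solves \eqref{eq:defeta}. Finally, since $\eta_t\in\mathfrak{M}_1$ for all $t$ and $\eta$ solves \eqref{eq:defeta}, the first part of the proof of Proposition \ref{prop:uniqueness} applies verbatim and yields $\eta\in\C(\R_+,(\mathfrak{M}_1,\tvnorm))$, i.e.\ continuity for the total variation norm.
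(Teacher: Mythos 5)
Your proposal follows the same overall architecture as the paper's proof — the defect functional $\Lambda^X_{f,T}$ is the paper's $\psi^X_T$, the identity $\Lambda^X_{f,T}(\zeta^K)=M^{X|K}_T(f)$ is obtained from the same compensator computation using $K_HS_H(t)=K_WS_W(t)$, the $L^2$ vanishing comes from Proposition \ref{prop:crochet} via \eqref{eq:aux-crochet}, and Skorokhod representation is used identically — but you handle the crux, the discontinuity of $\bfi$, by a genuinely different key lemma. The paper proves the fixed-time statement (Proposition \ref{prop:cvgce-stepfct}): $\angles{\tilzeta^{X|K}_T}{\phi}\to\angles{\tileta^X_T}{\phi}$ in $L^1$ for \emph{every} $T$ and every finite combination of rectangle indicators times $\C^1_b$ functions, by showing with mollifiers, Assumption \ref{hyp:zetaK0}(ii), absolute continuity of $\nu$ and a Borel--Cantelli argument that $\tileta^X_T$ charges no hyperplane $\{\tau_i=c\}$, followed by a gridding approximation whose tails are controlled by Assumption \ref{hyp:zetaK0}(i). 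You instead prove only that almost surely $\tileta^X_t(D)=0$ for Lebesgue-a.e.\ $t$, via the time-integrated Fubini bound (each previously contaminated individual spends at most $2\epsilon$ units of time with remaining infectious period in $[-\epsilon,\epsilon]$). Since $\bfi$ enters \eqref{eq:defeta} only under time integrals, this weaker, a.e.-in-$t$ statement does suffice here, and it is more elementary: it bypasses the rectangle machinery, and your bound for freshly infected individuals does not even use the absolute continuity of $\nu$. What you give up is fixed-$t$ convergence of $\bfi$-functionals, which the paper's lemma provides but which is not needed elsewhere in this proposition.

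There is, however, one step that would fail as written: your treatment of the cross-layer denominator. You justify passing the ratio $\angles{\tilzeta^{\overline X|K}_t}{\bfs\bfi}/\angles{\tilzeta^{\overline X|K}_t}{\bfs}$ to the limit by asserting that $s_{\overline X}(t)=\angles{\tileta^{\overline X}_t}{\bfs}$ is bounded below by a positive constant on $[0,T]$ because ``susceptibles are not exhausted in finite time.'' At this stage nothing of the sort is available: the exponential lower bound $s(t)\geq s(0)\e^{-Ct}$ is itself derived from Equation \eqref{eq:defeta} (cf.\ the proof of Proposition \ref{prop:sysdyn-sols}), i.e.\ from precisely what you are trying to establish, and the hypotheses of Theorem \ref{thm:cvgce} do not even guarantee $\angles{\eta^{\overline X}_0}{\bfs}>0$. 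The paper needs no positivity: since $\angles{\mu}{\bfs\bfi}\leq\nmax\angles{\mu}{\bfs}$ and $|\angles{\tilzeta^{H|K}_t}{\bfs(f^\I_t-f_t)}|\leq 2\inftynorm{f}\angles{\tilzeta^{H|K}_t}{\bfs}$, the pair (numerator, denominator) lies in $D=\{(x,y):|x|\leq cy^2\}$, on which $(x,y)\mapsto (x/y)\setind{y\neq 0}$ is continuous \emph{including at} $(0,0)$, so the continuous mapping theorem passes the ratio to the limit even if the susceptible mass vanishes. Your argument should be repaired this way rather than by the unproven lower bound. The remaining ingredients — extracting $\Lambda^X_{f,T}(\tileta)=0$ from the two convergences in probability, and obtaining total-variation continuity from the first part of Proposition \ref{prop:uniqueness} — are sound and match the paper (which additionally notes that weak path-continuity alone already rules out jumps in total variation, via Proposition A.6.1 of \cite{tranModelesParticulairesStochastiques2006}).
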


Before proceeding to the proof of this proposition, let us emphasize that there is some technical difficulty due to infectiousness being a discontinuous function of an individual's remaining infectious period. Indeed, tightness of $(\zeta^K)_{K \geq 1}$ in $\D(\R_+, (\M_F(E), w))^2$ allows us to extract a subsequence $(\zeta^{\varphi(K)})_{K \geq 1}$ which converges in law in this space to some limiting value $\eta$, and our aim is to show that $\eta$ satisfies Equation \eqref{eq:defeta}. However, convergence in law in $\D(\R_+, (\M_F(E), w))$ is not enough to ensure that $\angles{\zeta^{X|\varphi(K)}_t}{\bfi f}$ converges in law to $\angles{\eta^X_t}{\bfi f}$ for $f \in \C^1_b(E)$, as $\bfi$ is discontinuous on $E$. This leads to forthcoming Proposition \ref{prop:cvgce-stepfct}.

\begin{proof}
    Consider a subsequence $(\zeta^{\varphi(K)})_{K \geq 1}$ of $(\zeta^K)_{K \geq 1}$ which converges in law in the space $\D\left(\R_+, (\M_F(E), w) \right)^2$, and let $\eta$ be its limit. 
    
    Notice that it follows from the Proof of Proposition \ref{prop:tightness-weak} that $\eta \in C(\R_+, (\M_F(E),w))^2$ almost surely. Hence, following Proposition A.6.1 of \cite{tranModelesParticulairesStochastiques2006}, for any $X \in \{H,W\}$,
    \begin{equation*}
        ||\eta^{X}_T - \eta^{X}_{T-} ||_{TV} = \sup_{f \in \C_b(E,\R): \inftynorm{f} \leq 1} \left| \angles{\eta^{X}_T}{f} - \angles{\eta^{X}_{T-}}{f} \right| = 0 \;\; \text{almost surely}.
    \end{equation*}

    It remains to show that $\eta$ satisfies Equation \eqref{eq:defeta}. Let $T \geq 0$ and $f \in \C^1_b(\R_+ \times E,\R)$, and consider the application $\psi^H_T$ defined by 
    \begin{equation}
    \label{eq:def-psi(zeta)}
        \begin{aligned}
            \psi^H_T(\eta) & = \angles{\eta^H_T}{f_T} - \angles{\eta^H_0}{f_0} - \int_0^T \angles{\eta^H_t}{\A f_t} dt 
            - \int_0^T \angles{\eta^H_t}{\lambda_H \bfs \bfi (f^\I_t - f_t)} dt \\
            &- \int_0^T \frac{1}{\angles{\eta^W_t}{\bfs}}\angles{\eta^W_t}{\lambda_W \bfs \bfi} \angles{\eta^H_t}{\bfs (f^\I_t - f_t)} dt
            - \beta_G \int_0^T \frac{\angles{\eta^H_t}{\bfi}}{\angles{\eta^H_0}{\bfn}} \angles{\eta^H_t}{\bfs (f^\I_t - f_t)} dt.
        \end{aligned}
    \end{equation}
    Start by noticing that $\psi^H_T(\zeta^{\varphi(K)}) = M^{H|\varphi(K)}_T(f)$, as $K_H S_H(t) = K_W S_W(t)$. Using Jensen's inequality, it follows from Equation \eqref{eq:aux-crochet} that
    \begin{equation*}
        \E[|\psi^H_T(\zeta^{\varphi(K)})|]^2 \leq \E[|\psi^H_T(\zeta^{\varphi(K)})|^2] = \E[\langle M^{H|K}(f) \rangle_t] \leq \frac{1}{K} 2 \nmax C \inftynorm{f}^2 T \xrightarrow[K \to \infty]{} 0.
    \end{equation*}

    Suppose that $(\psi^H_T(\zeta^{\varphi(K)}))_{K \geq 1}$ converges in law to $\psi^H_T(\zeta)$. According to Theorem 3.5 of \cite{billingsleyConvergenceProbabilityMeasures1999}, it then is enough to proof that $(\psi^H_T(\zeta^{\varphi(K)}))_{K \geq 1}$ is uniformly integrable to obtain that its expectation converges to the expectation of $\psi^H_T(\zeta)$. In our case, uniform integrability is easily assured as the sequence $(\psi^H_T(\zeta^{\varphi(K)}))_{K \geq 1}$ is bounded. Indeed, using the fact that for all $T \geq 0$, $\zeta^{\varphi(K)}_T \in \mathfrak{M}_1$, we obtain from Equation \eqref{eq:def-psi(zeta)} that 
     \begin{equation*}
         \left|\psi^H_T(\zeta^{\varphi(K)})\right| \leq \left((2+C)\inftynorm{f} + \inftynorm{\A f} \right)T.
     \end{equation*}
     We may now conclude that 
     \begin{equation*}
         \E[|\psi^H_T(\zeta)|] = \lim_{K \to \infty} \E[|\psi^H_T(\zeta^{\varphi(K)})|] = 0,
     \end{equation*}
     which yields the desired result.

    It thus suffices to show that  $(\psi^H_T(\zeta^{\varphi(K)}))_{K \geq 1}$ converges in law to $\psi^H_T(\zeta)$. According to Skorokhod's representation theorem, there exists a probability space $\Omega$ on which one may define $(\tilde{\zeta}^K)_{K \geq 1}$ and $\tilde{\eta}$ equal in law to $(\zeta^{\varphi(K)})_{K \geq 1}$ and $\eta$, respectively, such that $(\tilde{\zeta}^K)_{K \geq 1}$ converges almost surely in $\D(\R_+, (\M_F(E),w))^2$ to $\tilde{\eta}$ on $\Omega$. In particular, it holds that 
    \begin{equation*}
        \forall T \geq 0, \forall X \in \{H,W\}, \forall g \in \C_b(E),\quad \angles{\tilde{\zeta}^{X|K}_T}{g} \xrightarrow[K \to \infty]{} \angles{\tilde{\eta}^X_T}{g} \;\; \text{almost surely}.
    \end{equation*}
    It follows immediately that for any $t \in [0,T]$ and $X \in \{H,W\}$, almost surely,
    \begin{equation}
    \label{eq:aux-psiA}
        (\angles{\tilzeta^{X|K}_t}{f_t}, \angles{\tilzeta^{X|K}_t}{\A f_t}) \xrightarrow[K \to \infty]{} (\angles{\tileta^X_t}{f_t}, \angles{\tileta^X_t}{\A f_t}).
    \end{equation}
    Since $|\angles{\tilzeta^{X|K}_t}{\A f_t}| \leq 2\inftynorm{Df}$, where $Df$ designates the differential of $f$, dominated convergence ensures that
    \begin{equation}
    \label{eq:aux-psiB}
        \int_0^T \angles{\tilzeta^{X|K}_t}{\A f_t} dt \xrightarrow[K \to \infty]{} \int_0^T \angles{\tileta^X_t}{\A f_t} dt \;\; \text{almost surely}.
    \end{equation}

    In order to establish the desired convergence of the last three terms of $\psi^H_T(\tilzeta)$, we will make use of the following proposition, whose proof is postponed. In this context, a $d$-dimensional rectangle is a set defined as the product of $d$ intervals of $\R \cup \{-\infty, +\infty\}$. 

    \begin{prop}
    \label{prop:cvgce-stepfct}
        For any $n \in \bbrackets{1}{\nmax}$, for any $s \in \bbrackets{0}{n}$, consider $m = m(n,s) < \infty$, a set $(A^{n,s}_k)_{k \leq m}$ of pairwise disjoint $(n-s)$-dimensional rectangles and a set $(\varphi^{n,s}_k)_{k \leq m}$ of functions belonging to $C^1_b(\R^{n-s})$. For any $\tau \in \R^{\nmax}$, let $\tau_{1,n-s} = (\tau_1, \dots, \tau_{n-s})$. Define the function $\phi : E \to \R$ by
        \begin{equation*}
            \forall x = (n,s,\tau) \in E, \quad \phi(x) = \sum_{k=1}^{m(n,s)} \ind_{A^{n,s}_k}(\tau_{1,n-s}) \varphi^{n-s}_k(\tau_{1,n-s}).
        \end{equation*}
        Then for any $X \in \{H,W\}$ and $T \geq 0$, it holds that 
        \begin{equation*}
            \angles{\tilde{\zeta}^{X|K}_T}{\phi} \xrightarrow[K \to \infty]{} \angles{\tilde{\eta}^X_T}{\phi} \;\; \text{in } L^1.
        \end{equation*}
        
    \end{prop}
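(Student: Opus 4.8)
The plan is to exploit that $\phi$, though discontinuous, is continuous off a set that is negligible for the limiting measure, and then to invoke a portmanteau-type argument valid for bounded almost-everywhere continuous functions. First I would record two elementary facts. The function $\phi$ is bounded, since each $\varphi^{n,s}_k$ lies in $\C^1_b$, the rectangles $(A^{n,s}_k)_k$ are pairwise disjoint, and there are finitely many fibres $(n,s)$. Moreover its set of discontinuities is contained in a finite union $D = \bigcup_{i,c}\{x=(n,s,\tau)\in E : \tau_i = c\}$ of hyperplanes, where $i\in\bbrackets{1}{n-s}$ and $c$ ranges over the finitely many endpoints of the intervals defining the rectangles; indeed $\ind_{A^{n,s}_k}$ is continuous away from the boundary of $A^{n,s}_k$, which is of this form, and $\varphi^{n,s}_k$ is continuous.

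The heart of the matter is to show that $\tilde{\eta}^X_T(D)=0$ almost surely, equivalently that $\tilde{\eta}^X_T(H_{i,c})=0$ a.s. for each hyperplane $H_{i,c}=\{\tau_i=c\}$. For this I would bound, uniformly in $K$, the mass placed by $\tilde{\zeta}^{X|K}_T$ in an $\epsilon$-neighbourhood of $H_{i,c}$, and then let $\epsilon\to0$. Choosing a continuous $g_\epsilon$ with $\ind_{H_{i,c}}\leq g_\epsilon \leq \ind_{\{|\tau_i-c|\leq\epsilon\}}$ and using the almost sure weak convergence $\angles{\tilde{\zeta}^{X|K}_T}{g_\epsilon}\to\angles{\tilde{\eta}^X_T}{g_\epsilon}$ already available in the ongoing proof (the limit being continuous at $T$, so the time-$T$ marginals converge), together with bounded convergence of expectations, one obtains
\[
\E\big[\tilde{\eta}^X_T(H_{i,c})\big] \leq \sup_{K\geq1}\E\big[\tilde{\zeta}^{X|K}_T(\{|\tau_i-c|\leq\epsilon\})\big].
\]
It then suffices to prove that the right-hand side tends to $0$ as $\epsilon\to0$.

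To control this neighbourhood mass I would split each active coordinate at position $i$ according to whether the corresponding individual was infected at time $0$ or contaminated afterwards. Since a coordinate decreases at unit rate once activated and the number of contaminated members only increases, an initial coordinate satisfies $\tau^X_{k,i}(T)=\tau^X_{k,i}(0)-T$, so its contribution is exactly $\frac{1}{K_X}\sum_k\ind_{\{n^X_k-s^X_k(0)\geq i,\,|(\tau^X_{k,i}(0)-T)-c|\leq\epsilon\}}$, whose supremum over $K$ of the expectation vanishes as $\epsilon\to0$ by Assumption \ref{hyp:zetaK0}(ii). A coordinate activated at an infection time $t\in(0,T]$ carries an independent mark $\sigma\sim\nu$ and takes value $\sigma-(T-t)$ at time $T$; writing the total contribution of such coordinates through the Poisson representation \eqref{eq:def-zeta} and replacing each $Q_Y$ by its mean measure $dt\,\mu_Y(du)$, the $\sigma$-integral produces $\nu([c+(T-t)-\epsilon,\,c+(T-t)+\epsilon])\leq\omega(\epsilon)$ with $\omega(\epsilon)=\sup_a\nu([a-\epsilon,a+\epsilon])$, while the remaining integral of the acceptance indicators $\I_Y$ over the rate coordinates, labels and time is bounded by $\nmax$ (the expected number of infections per structure). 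Because $\nu$ is absolutely continuous, its cumulative distribution function is continuous, hence uniformly continuous, so $\omega(\epsilon)\to0$. Both contributions therefore vanish uniformly in $K$, giving $\tilde{\eta}^X_T(H_{i,c})=0$ a.s.; summing over the finitely many hyperplanes yields $\tilde{\eta}^X_T(D)=0$ a.s.

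Finally I would conclude by the standard extension of the portmanteau theorem to bounded functions whose discontinuity set is null for the limit: since $\tilde{\zeta}^{X|K}_T\to\tilde{\eta}^X_T$ weakly almost surely, $\phi$ is bounded, and $\tilde{\eta}^X_T(D)=0$ a.s., we obtain $\angles{\tilde{\zeta}^{X|K}_T}{\phi}\to\angles{\tilde{\eta}^X_T}{\phi}$ almost surely. As $|\angles{\tilde{\zeta}^{X|K}_T}{\phi}|\leq\inftynorm{\phi}$ uniformly in $K$, dominated convergence upgrades this to convergence in $L^1$, as claimed. The main obstacle is the middle step, namely controlling the neighbourhood mass uniformly in $K$, and in particular the need to treat separately the individuals already infected at time $0$ (handled by Assumption \ref{hyp:zetaK0}(ii)) and those contaminated during $(0,T]$ (handled by the absolute continuity of $\nu$); this separation is precisely where the discontinuity of the infectiousness indicator forces the extra care anticipated before the statement.
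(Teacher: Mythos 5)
Your proposal is correct, but it is organized genuinely differently from the paper's proof, and the difference is worth recording. The paper proceeds in two steps: it first shows that sets of the form $\{(n,s)\}\times B$, for $B$ a finite rectangle, are $\tilde{\eta}^X_T$-continuity sets (via exactly the neighbourhood-mass estimate you give, splitting initially infected coordinates, handled by Assumption \ref{hyp:zetaK0}(ii), from coordinates activated after time $0$, handled through the Poisson intensity and the absolute continuity of $\nu$), so that convergence holds for indicators of rectangles by the Portmanteau theorem; it then approximates the general $\phi$ by step functions built on a dyadic partition into small hypercubes, controlling the discretization error with the mean value inequality and the mass escaping to infinity with Assumption \ref{hyp:zetaK0}(i), and concludes in $L^1$ by a triangle inequality. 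You bypass this second step entirely: having shown that a finite union of (fiber-restricted) hyperplanes containing the discontinuity set of $\phi$ is $\tilde{\eta}^X_T$-null almost surely, you invoke the mapping theorem for bounded functions that are a.e.\ continuous under the limit, and upgrade to $L^1$ by dominated convergence. This is shorter, avoids the dyadic approximation machinery, and only uses Assumption \ref{hyp:zetaK0}(ii), not (i). Your control of the post-time-zero contribution also differs in detail but is valid: the paper bounds the total infection rate by a constant and lets $\int_0^T \nu([c+(T-t)-\epsilon,c+(T-t)+\epsilon])\,dt$ vanish, whereas you bound $\nu$ of the window uniformly by $\omega(\epsilon)=\sup_a \nu([a-\epsilon,a+\epsilon])$ (which tends to zero since the distribution function of an atomless law is uniformly continuous on $\R$) and bound the expected number of infections per structure by $\nmax$.

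One point needs care in a full write-up: the sandwich $\ind_{H_{i,c}}\leq g_\epsilon\leq\ind_{\{|\tau_i-c|\leq\epsilon\}}$ must incorporate the fiber restriction, i.e.\ you should require $g_\epsilon\leq\setind{\bfn(\cdot)-\bfs(\cdot)\geq i}\ind_{\{|\tau_i-c|\leq\epsilon\}}$, which is still achievable by a continuous function since $\bfn$ and $\bfs$ take discrete values, so the fibers are clopen in $E$. Otherwise, in the case $c=0$ — which genuinely occurs in the application of this proposition, since $\bfi$ is built from $\setind{\tau_j>0}$ — the unrestricted window $\{|\tau_i|\leq\epsilon\}$ contains every structure whose $i$-th coordinate is inactive (set to $0$ by convention in $E$), and its mass does not vanish as $\epsilon\to0$. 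With the restriction in place, the window mass is exactly the quantity that your split into initially infected coordinates and coordinates activated on $(0,T]$ controls, and the argument closes as you describe.
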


    Let us focus on the second-to-last term, representing infection events occurring within workplaces, as the other two can be treated similarly. 
    
    The application $\phi(x) = \lambda_W \bfs(x) \bfi(x)$ is of the form described in Proposition \ref{prop:cvgce-stepfct}, hence for any $t \in [0,T]$, $\angles{\tilzeta^{W|K}_t}{\lambda_W \bfs \bfi}$ converges in $L^1$ to $\angles{\tileta^{W}_t}{\lambda_W \bfs \bfi}$ as $K$ tends to infinity. Also, notice that as $f \in \C^1_b(\R_+ \times E, \R)$, it follows that for any $t \in [0,T]$, $f^\I_t \in \C^1_b(E)$. Thus Proposition \ref{prop:cvgce-stepfct} ensures that for any $t \in [0,T]$, $\angles{\tilzeta^{H|K}_t}{\bfs(f^\I_t - f_t)}$ converges in $L^1$ to $\angles{\tileta^{H}_t}{\bfs(f^\I_t - f_t)}$ as $K$ tends to infinity. 

    In particular, the following convergence holds in probability:
    \begin{equation*}
        X^K_t \coloneqq \angles{\tilzeta^{W|K}_t}{\lambda_W \bfs \bfi} \angles{\tilzeta^{H|K}_t}{\bfs(f^\I_t - f_t)} \xrightarrow[K \to \infty]{} X_t \coloneqq \angles{\tileta^{W}_t}{\lambda_W \bfs \bfi} \angles{\tileta^{H}_t}{\bfs(f^\I_t - f_t)}.
    \end{equation*}
    Letting $c = 2 \lambda_W \nmax^2 \inftynorm{f}$ and $D = \{(x,y) : |x| \leq cy^2\}$, then $(X^K_t, \angles{\tilzeta^{W|K}_t}{\bfs})$ and $(X_t, \angles{\tileta^{W}_t}{s})$ belong almost surely to $D$, for any $K \geq 1$. As $\angles{\tilzeta^{W|K}_t}{\bfs}$ converges almost surely to $\angles{\tileta^W_t}{\bfs}$, and the application $(x,y) \mapsto (x/y)\setind{y \neq 0}$ is continuous on $D$, we deduce the following convergence in probability:
    \begin{equation*}
        Y^K_t \coloneqq \frac{\angles{\tilzeta^{W|K}_t}{\lambda_W \bfs \bfi}}{\angles{\tilzeta^{W|K}_t}{\bfs}} \angles{\tilzeta^{H|K}_t}{\bfs(f^\I_t - f_t)} \xrightarrow[K \to \infty]{} Y_t \coloneqq \frac{\angles{\tileta^{W}_t}{\lambda_W \bfs \bfi}}{\angles{\tileta^W_t}{\bfs}} \angles{\tileta^{H}_t}{\bfs(f^\I_t - f_t)}.
    \end{equation*}

    In addition, for any $K \geq 1$ and any $t \in [0,T]$, $|Y^K_t| \leq 2 \lambda_W \nmax \inftynorm{f}$. Thus using twice dominated convergence, we first obtain that the above convergence of $(Y^K_t)_{K \geq 1}$ to $Y_t$ also holds in $L^1$, and subsequently the following convergence holds in $L^1$:
    \begin{equation}
    \label{eq:aux-psiC}
        \int_0^T \frac{\angles{\tilzeta^{W|K}_t}{\lambda_W \bfs \bfi}}{\angles{\tilzeta^{W|K}_t}{\bfs}} \angles{\tilzeta^{H|K}_t}{\bfs(f^\I_t - f_t)} dt \xrightarrow[K \to \infty]{} \int_0^T \frac{\angles{\tileta^{W}_t}{\lambda_W \bfs \bfi}}{\angles{\tileta^W_t}{\bfs}} \angles{\tileta^{H}_t}{\bfs(f^\I_t - f_t)} dt.
    \end{equation}

    Reasoning in a similar manner, one also obtains:
    \begin{equation}
    \label{eq:aux-psiD}
        \begin{aligned}
            \int_0^T \angles{\tilzeta^H_t}{\lambda_H \bfs \bfi (f^\I_t - f_t)} dt & \xrightarrow[K \to \infty]{} \int_0^T \angles{\tileta^H_t}{\lambda_H \bfs \bfi (f^\I_t - f_t)} dt \;\; \text{in } L^1, \\
            \int_0^T \frac{\angles{\tilzeta^H_t}{\bfi}}{\angles{\tilzeta^H_0}{\bfn}} \angles{\tilzeta^H_t}{\bfs (f^\I_t - f_t)} dt & \xrightarrow[K \to \infty]{} \int_0^T \frac{\angles{\tileta^H_t}{\bfi}}{\angles{\tileta^H_0}{\bfn}} \angles{\tileta^H_t}{\bfs (f^\I_t - f_t)} dt \;\; \text{in } L^1.
        \end{aligned}
    \end{equation}
    
    Thus, Equations (\ref{eq:aux-psiA}-\ref{eq:aux-psiD}) imply that all the terms on the right hand side of the definition of $\psi^H_T(\tilzeta^K)$ as stated in Equation \eqref{eq:def-psi(zeta)} converge in probability, and thus their linear combination converges in probability to the linear combination of their limits. In other words, $\psi^H_T(\tilzeta^K)$ converges in probability to $\psi^H_T(\tileta)$, which ensures as desired that $\psi^H_T(\zeta^K)$ converges in law to $\psi^H_T(\eta)$. This concludes the proof.
    
\end{proof}

In order to conclude, we only need to show that Proposition \ref{prop:cvgce-stepfct} holds.

\begin{proof}[Proof of Proposition \ref{prop:cvgce-stepfct}]
    
    \textbf{Step 1.} Recall that a $d$-dimensional rectangle is a set $A$ defined as the product of $d$ intervals of $\R \cup \{-\infty, +\infty\}$. If all $d$ intervals are included in $\R$, the rectangle will further be said finite.

    Let $X \in \{H,W\}$, $n \in \bbrackets{1}{\nmax}$ and $s \in \bbrackets{0}{n}$. We start by showing that for any $T \geq 0$ and any finite $(n-s)$-dimensional rectangle $B$,
    \begin{equation}
    \label{eq:cvgc-rect}
        \angles{\tilzeta^{X|K}_T}{\ind_{\{(n,s)\}\times B}} \xrightarrow[K \to \infty]{}  \angles{\tileta^{X}_T}{\ind_{\{(n,s)\}\times B}} \;\; \text{almost surely}.
    \end{equation}
    As $\tileta^X \in \C(\R_+, (\M_F(E),w))$, it follows that for any $T$, $\tilzeta^{X|K}_T$ converges almost surely to $\tileta^{X}_T$ in $(\M_F(E),w)$. Thus, in order to establish the desired result, it is sufficient to show that $\bfB = \{(n,s)\}\times B$ is a $\tileta^X_T$-continuity set, in which case the Portmanteau theorem allows to conclude. 

    For any set $A$, let $\partial A$ be the boundary of $A$. Then $\partial \bfB = \{(n,s)\} \times \partial B$. As $B$ is a $(n-s)$-dimensional rectangle, there exist $a_i < b_i \in \R$ for $i \in \bbrackets{1}{n-s}$ such that $B$ can be written as the product  of intervals (potentially open, closed or half-open) delimited by $a_i < b_i$, for $i \in \bbrackets{1}{n-s}$. Thus
    \begin{equation*}
        \partial B = \bigcup_{i=1}^{n-s} \bigcup_{c \in \{a_i,b_i\}} \left( \prod_{j=1}^{i-1} [a_j, b_j] \times \{c\} \times \prod_{k=i+1}^{n-s} [a_k, b_k] \right).
    \end{equation*}

    Consider any $i \in \bbrackets{1}{n-s}$ and $c \in \R$. We are going to prove that 
    \begin{equation}
    \label{eq:rect-bound}
        \angles{\tileta^{X}_T}{\ind_{\{(n,s)\} \times \left( \prod_{j=1}^{i-1} [a_j, b_j] \times \{c\} \times \prod_{k=i+1}^{n-s} [a_k, b_k] \right) }} = 0,
    \end{equation}
    which will be enough to conclude. In order to achieve this, let us introduce a mollifier $\psi \in \C^\infty(\R)$ in the same sense as in the proof of Lemma \ref{lem:density}, with compact support in $[-1,1]$. For $\varepsilon > 0$, define the function $\varphi_\varepsilon: x \mapsto \varepsilon^{-1} \psi(x/\varepsilon)$, whose support lies in $[-\varepsilon, \varepsilon]$ and which converges to $\delta_0$ in the sense of distributions, when $\varepsilon$ goes to zero. For any $x = (n,s,\tau) \in E$, let 
    \begin{equation*}
        \phi_{\varepsilon}(x) = \setind{\bfn(x)=n, \bfs(x)=s} \Big(\prod_{\substack{j=1\\j \neq i}}^{n-s} \ind_{[a_j, b_j]} * \varphi_\varepsilon(\tau_j) \Big) \ind_{{c}} * \varphi_\varepsilon(\tau_i).
    \end{equation*}
    As $\phi_\varepsilon \in \C^1_b(E)$ and $\tilzeta^{X|K}_T$ converges almost surely to $\tileta^X_T$ in $(\M_F(E), w)$, dominated convergence implies that 
    \begin{equation*}
        \E[\angles{\tileta^X_T}{\phi_\varepsilon}] = \lim_{K \to \infty} \E[\angles{\tilzeta^{X|K}_T}{\phi_\varepsilon}].
    \end{equation*}
    Notice that
    \begin{equation*}
        \phi_{\varepsilon}(x) \leq \setind{\bfn(x) - \bfs(x) > i} \ind_{{c}} * \varphi_\varepsilon(\tau_i(x)).
    \end{equation*}
    Hence proceeding as in the proof of Proposition \ref{prop:tightness-weak}, it follows that 
    \begin{equation*}
    \begin{aligned}
        \E[\angles{\tilzeta^{X|K}_T}{\phi_\varepsilon}] &\leq \E\left[ \frac{1}{K_X} \sum_{k=1}^{K_X} \setind{n^X_k - s^X_k(0) \geq i, \; | (\tau^X_{k,i}(0) - T) - c | \leq \epsilon} \right] \\
        &+ \frac{C}{2} \int_0^T \nu([c + (T-t) - \varepsilon, c + (T-t) + \varepsilon]) dt.
    \end{aligned}
    \end{equation*}
    
     Absolute continuity of $\nu$ with regard to the Lebesgue measure and Assumption \ref{hyp:zetaK0} ensure that the right hand side is dominated by a function $c(\varepsilon)$ which does not depend on $K$, and which goes to zero with $\varepsilon$. Thus $\E[\angles{\tileta^X_T}{\phi_\varepsilon}] \leq c(\varepsilon)$. In particular, one may construct a sequence $(\varepsilon_n)_{n \geq 1}$ which converges to $0$ and satisfies $\sum_{n \geq 0} c(\varepsilon_n) < \infty$. Then on the one hand, the Borel-Cantelli lemma ensures that 
     $\angles{\tileta^X_T}{\phi_\varepsilon}$ converges almost surely to $0$ as $n$ tends to infinity. On the other hand, by dominated convergence, $\angles{\tileta^X_T}{\phi_\varepsilon}$ converges almost surely to the left-hand side of Equation \eqref{eq:rect-bound} as $\varepsilon$ tends to zero, hence Equation \eqref{eq:rect-bound} is proven to be true. 

     As a consequence, we conclude that $\angles{\tileta^X_T}{\partial \bfB} = 0$, and thus Equation \eqref{eq:cvgc-rect} holds.

     \textbf{Step 2.} Consider now a function $\phi$ as described in the proposition. For any $N$, let us introduce a partition of $\R^{n-s}$ whose elements consist in (partially open) hypercubes of side length $2^{-N}$. For every $k \leq m(n,s)$, a partition $(B^{N}_{k,j})_{j \geq 1}$ of $A^{n,s}_k$ is obtained by taking the intersection of $A^{n,s}_k$ with those hypercubes. As $A^{n,s}_k$ is a rectangle itself, the family $(B^{N}_{k,j})_{j \geq 1}$ consists of rectangles of side length at most $2^{-N}$. For every $j$, consider a point $z^N_{k,j}$ belonging to $B^{N}_{k,j}$. Finally, define the set $J_N(k) = \{j \geq 0: \sup\{\inftynorm{x} : x \in B^N_{k,j}\} \leq N \}$, which contains only a finite number of elements. Then we can define the following approximation of $\phi$: 
     \begin{equation*}
         \forall x \in E, \;\; \phi_N(x) = \sum_{\substack{ 1 \leq n \leq \nmax\\0 \leq s \leq n}} \setind{\bfn(x)=n,\, \bfs(x) = s} \sum_{k=1}^{m(n,s)} \sum_{j \in J_N(k)} \varphi^{n-s}_k(z^N_{k,j}) \ind_{B^N_{k,j}}(\tau_{1,n-s}(x)).
     \end{equation*}

    Using our result from the first step, for every $(n,s)$ such that $1 \leq n \leq \nmax$ and $0 \leq s \leq n$, for every $k \leq m(n,s)$ and $j \leq J_N(k)$, we obtain that 
    \begin{equation}
    \label{eq:aux-cvgce-ps}
        \lim_{K \to \infty} \angles{\tilzeta^{X|K}_T}{\phi_N} = \angles{\tileta^{X}_T}{\phi_N} \;\; \text{almost surely}.
    \end{equation}

    Notice that for any $x \in E$ such that $\inftynorm{\tau(x)} > N$, $\phi_N(x) = 0$. Hence for any $x = (n,s,\tau)$, one obtains the following inequality: 
    {\small
    \begin{equation}
        | \phi_N(x) - \phi(x)| \leq |\phi(x)| \setind{\inftynorm{\tau} > N} + \sum_{k=1}^{m(n,s)} \!\! \sum_{j \in J_N(k)} |\varphi^{n-s}_k(z^N_{k,j}) - \varphi^{n-s}_k(\tau_{1,n-s} )| \ind_{B^N_{k,j}}(\tau_{1,n-s}).
    \end{equation}
    }
    
    Notice that there exists at most one $(k,j) \in \bbrackets{1}{m(n,s)} \times J_N(k)$ such that $\tau_{1,n-s} \in B^N_{k,j}$. As $\varphi^{n,s}_k \in \C^1_b(\R^{n-s})$, the mean value inequality further implies that 
    \begin{equation*}
        \forall k \leq m(n,s), \forall j \in J_N(k), \forall z \in B^{N}_{k,j}, \quad |\varphi^{n-s}_k(z^N_{k,j}) - \varphi^{n-s}_k(z) | \leq \inftynorm{D\varphi^{n,s}_k} d_N,
    \end{equation*}
    where $d_N$ denotes the maximum of the diameters of $d$-dimensional hypercubes of side length $2^{-N}$, for $d \leq \nmax$. Letting $M = \max_{n,s,k} \inftynorm{D\varphi^{n,s}_k}$, it follows that :
    \begin{equation*}
    \label{eq:aux-dom}
        \forall x \in E, \quad |\phi_N(x) - \phi(x)| \leq \inftynorm{\phi} \setind{\inftynorm{\tau} > N} + M d_N 
    \end{equation*}
    Hence $\phi_N$ converges point-wise to $\phi$. Thus, by dominated convergence, 
    \begin{equation}
    \label{eq:aux-cvgce-tileta}
        \angles{\tileta^X_T}{\phi_N} \xrightarrow[N \to \infty]{} \angles{\tileta^X_T}{\phi}.
    \end{equation}
    
    Furthermore, it follows from Equation \eqref{eq:aux-dom} that for any $K \geq 1$,
    \begin{equation*}
        \E[|\angles{\tilzeta^{X|K}_T}{\phi_N} - \angles{\tilzeta^{X|K}_T}{\phi}|] \leq \inftynorm{\phi} \sup_{K \geq 1} \E[\angles{\zeta^{X|K}_T}{\setind{\inftynorm{\tau(\cdot)} > N}}] + M d_N.
    \end{equation*}
    Reasoning as in the proof of Proposition \ref{prop:tightness-weak}, and using Assumption \ref{hyp:zetaK0}, we obtain that 
    \begin{equation*}
        \lim_{N \to \infty} \sup_{K \geq 1} \E[\angles{\zeta^{X|K}_T}{\setind{\inftynorm{\tau(\cdot)} > N}}] = 0,
    \end{equation*}
    and as a consequence,
    \begin{equation}
    \label{eq:aux-L1cvgce}
        \lim_{N \to \infty} \sup_{K \geq 1} \E[|\angles{\tilzeta^{X|K}_T}{\phi_N} - \angles{\tilzeta^{X|K}_T}{\phi}|] = 0.
    \end{equation}
    Noticing that 
    {\small
    \begin{equation*}
        \E[|\angles{\tilzeta^{X|K}_T}{\phi} - \angles{\tileta^{X}_T}{\phi}|] \leq \E[|\angles{\tilzeta^{X|K}_T}{\phi - \phi_N}|] + \E[|\angles{\tilzeta^{X|K}_T}{\phi_N} - \angles{\tileta^{X}_T}{\phi_N}|] + |\angles{\tileta^{X}_T}{\phi_N - \phi}|
    \end{equation*}
    }
    together with Equations \eqref{eq:aux-L1cvgce}, \eqref{eq:aux-cvgce-ps} and \eqref{eq:aux-cvgce-tileta} finally yields that 
    \begin{equation*}
        \lim_{K \to \infty} \E[|\angles{\tilzeta^{X|K}_T}{\phi} - \angles{\tileta^{X}_T}{\phi}|] = 0.
    \end{equation*}
    This concludes the proof.
\end{proof}

\subsubsection{Proof of Theorem \ref{thm:cvgce}} 

The previous results are sufficient to establish Theorem \ref{thm:cvgce}. Indeed, it follows from Propositions \ref{prop:tightness-weak} and \ref{prop:idlimit} that from every subsequence of $(\zeta^K)_{K \geq 1}$, one may extract a subsubsequence converging in $\D(\R_+, (\M_F(E),w))^2$ to a solution of Equation \eqref{eq:defeta} which is continuous with respect to the total variation norm. As by assumption, $\zeta^K_0$ converges in law to $\eta_0 \in \mathfrak{M}_1$, Proposition \ref{prop:uniqueness} implies that all of these subsubsequences converge to the same limit $\eta$, which is the unique solution of Equation \eqref{eq:defeta} with initial condition $\eta_0$. As $\eta_0 \in \mathfrak{M}_1$, Proposition \ref{prop:uniqueness} further ensures that $\eta \in \D(\R_+, \mathfrak{M}_1)$. This establishes the convergence of $(\zeta^K)_{K \geq 1}$ to $\eta$ in $\D(\R_+, \M_1(E))^2$. 

\subsection{Proof of Theorem \ref{thm:cvgce-sysdyn}}
\label{sec:proof-sysdyn}

This section is devoted to extracting dynamical system \crefrange{eq:subeq-sysdyn-si}{eq:subeq-sysdyn-structures} from the measure-valued integral equation \eqref{eq:defeta}, under the assumption that $\nu$ is the exponential distribution of parameter $\gamma$.

\subsubsection{Preliminary study of the dynamical system}

Before establishing Theorem \ref{thm:cvgce-sysdyn} itself, let us start by showing that dynamical system \crefrange{eq:subeq-sysdyn-si}{eq:subeq-sysdyn-structures} endowed with initial condition \eqref{eq:initialcdts} admits at most a unique solution. Existence will follow from the proofs of the forthcoming subsections, since they construct a solution to the dynamical system.

For this section, let us rewrite dynamical system \crefrange{eq:subeq-sysdyn-si}{eq:subeq-sysdyn-structures} as follows, in order to emphasize the associated Cauchy problem. Recall that the dynamical system is of dimension $d = 2 + 2\#\config = \nmax(\nmax+1)$.

Let $y \in \C^1(\R_+, \R^d)$ and $f: \R^d \to \R^d$ be defined such that dynamical system \crefrange{eq:subeq-sysdyn-si}{eq:subeq-sysdyn-structures} amounts to  
\begin{equation}
    \label{eq:cauchypb}
    y'(t) = f(y(t)) \;\; \forall t \geq 0.    
\end{equation} 
The components of $y$ (and resp. $f$) will be called $s$, $i$ and $n^X_{S,I}$ (resp. $f_s$, $f_i$ and $f_{X,S,I}$) for $X \in \{H,W\}$ and $(S,I) \in \config$, in order to simplify their identification with the unknowns of the corresponding dynamical system. More precisely, consider the applications
\begin{equation*}
        \tau_X(y) = -\frac{\lambda_X}{m_X} \sum_{(S,I) \in \config} SI \; n^X_{S,I} \text{ for } X \in \{H,W\}, 
        \text{ and } \tau_G(y) = \beta_G i.
\end{equation*}
Then $f: \R^d \to \R^d$ is defined as follows, for any $y=(s,i,n^X_{S,I}: X \in \{H,W\}, (S,I) \in \config) \in \R^d$:
\begin{equation*}
        f_s(y) = -(\tau_H(y) + \tau_W(y) + \tau_G(y)s) \; \text{and} \; f_i(y) = -f_s(y) - \gamma i, \\
\end{equation*}
while for all $X \in \{H,W\}$ and $(S,I) \in \config$
\begin{equation*}
    \begin{aligned}
        f_{X,S,I}(y) =\, &-\left[\left(\lambda_X I + \frac{\tau_{\overline{X}}(y)}{s} + \tau_G(y)\right) S  - \gamma I\right] n^X_{S,I}
         + \gamma(I+1)n^X_{S,I+1} \setind{S + I < \nmax} \\
        \,& + \left(\lambda_X(I-1) + \frac{\tau_{\overline{X}}(y)}{s} + \tau_G(y) \right)(S+1)n^X_{S+1,I-1} \setind{I \geq 1}.
    \end{aligned}
\end{equation*}

Also, notice that there are some natural constraints that we expect the solution of dynamical system \crefrange{eq:subeq-sysdyn-si}{eq:subeq-sysdyn-structures} to satisfy. Clearly, $s$, $i$ and $n^X_{S,I}$ should belong to $[0,1]$. Also, as the population is partitioned into susceptible, infected and removed individuals, it follows that $s+i \leq 1$. Similarly, as all individuals belong to exactly one household and one workplace, and as $n^X_{S,I}$ corresponds to the proportion of structures of type $X$ which contain $S$ susceptible and $I$ infected individuals, we expect that for $X \in \{H,W\}$, 
\begin{equation}
\label{eq:aux-ineq}    
    \sum_{(S,I) \in \config} n^X_{S,I} \leq 1, 
    \text{ and } \sum_{(S,I) \in \config} S n^X_{S,I}  \leq m_X s. 
\end{equation}
We thus define the following set $V \subset \R^d$, which formalizes these constraints:
{\small
\begin{equation*}
    V = \left\{y \in [0,1]^d: s + i \leq 1, \sum_{(S,I) \in \config} n^X_{S,I} \leq 1 \text{ and } m_X s - \sum_{(S,I) \in \config} S n^X_{S,I} \geq 0 \;\; \forall X \in \{H,W\} \right\}.
\end{equation*}
}

\begin{prop}
\label{prop:sysdyn-sols}
    Let $y^* \in V$. Then the following assertions hold:    
    \begin{enumerate}[label= (\roman*)]
        \item Suppose that there exists a solution $y$ of the Cauchy problem \eqref{eq:cauchypb} with initial condition $y(0) = y^*$. Then $y(t) \in V$ for any $t \geq 0$ for which $y$ is well defined. 
        \item For any $T \geq 0$, this problem admits at most a unique solution $y$ on $[0,T]$.
        \item In particular, for any $\varepsilon > 0$, the dynamical system \crefrange{eq:subeq-sysdyn-si}{eq:subeq-sysdyn-structures} endowed with initial condition \eqref{eq:initialcdts} admits at most a unique solution.
    \end{enumerate}
\end{prop}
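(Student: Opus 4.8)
The plan is to treat \eqref{eq:cauchypb} as an autonomous ODE whose right-hand side $f$ is smooth on the open region $\{s>0\}$ but singular on $\{s=0\}$ because of the terms $\tau_{\overline X}(y)/s$. The whole difficulty is to confine trajectories to $\{s>0\}\cap V$, on which $f$ is locally Lipschitz, and then to conclude by Gronwall. I would first prove the invariance statement (i), deduce (ii) from it, and finally check that the datum \eqref{eq:initialcdts} lies in $V$ with $s(0)>0$ to obtain (iii). Note first that, being a solution, $y$ takes values in the domain of $f$, so $s(t)>0$ wherever $y$ is defined; this legitimises every division by $s$ below.

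For (i) I would show that each inequality defining $V$ is preserved, via a Nagumo-type (subtangentiality) reading of the flow together with explicit differential (in)equalities for the constraint functions. The easy constraints: summing \eqref{eq:subeq-sysdyn-si}--\eqref{eq:subeq-sysdyn-i} gives $\frac{d}{dt}(s+i)=-\gamma i\le 0$, so $s+i\le 1$ is preserved; on the face $\{n^X_{S,I}=0\}$ the outgoing term of $f_{X,S,I}$ vanishes and the two incoming terms are products of nonnegative quantities (here one uses $\tau_{\overline X}(y)/s,\tau_G(y)\ge 0$ and $\lambda_X(I-1)\ge 0$ for $I\ge 1$ on $V$), so $f_{X,S,I}\ge 0$ and $n^X_{S,I}\ge 0$ is preserved; on $\{i=0\}$ one has $\tau_G(y)=\beta_G i=0$, whence $f_i=\tau_H+\tau_W\ge 0$. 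Summing \eqref{eq:subeq-sysdyn-structures} over $(S,I)\in\config$, the internal fluxes telescope and only the fluxes leaving $\config$ (infection of the last susceptible, recovery dropping a structure below two members) survive, with a negative sign, giving $\frac{d}{dt}\sum_{\config}n^X_{S,I}\le 0$; the upper bounds $s,i,n^X_{S,I}\le 1$ then follow from the previous constraints.

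The crux is the last constraint $\Phi_X(y):=m_X s-\Sigma_X\ge 0$, where $\Sigma_X=\sum_{(S,I)\in\config}S\,n^X_{S,I}$. Computing $\frac{d}{dt}\Sigma_X=\sum_\config S f_{X,S,I}$, I would reindex the infection-inflow sum by $(S,I)\mapsto(S-1,I+1)$ and observe that recoveries leave $S$ unchanged; after the routine but delicate bookkeeping at the boundary of $\config$, all boundary corrections cancel and one is left with the clean identity $\frac{d}{dt}\Sigma_X=-m_X\tau_X(y)-\big(\tfrac{\tau_{\overline X}(y)}{s}+\tau_G(y)\big)\Sigma_X$. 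Combining with $m_X\frac{d}{dt}s=-m_X(\tau_H+\tau_W+\tau_G s)$ and $m_X\tau_X=\lambda_X\sum_\config SI\,n^X_{S,I}$ collapses everything to the linear homogeneous relation $\frac{d}{dt}\Phi_X=-\big(\tfrac{\tau_{\overline X}(y)}{s}+\tau_G(y)\big)\Phi_X$, so that $\Phi_X(t)=\Phi_X(0)\exp\!\big(-\!\int_0^t(\tfrac{\tau_{\overline X}}{s}+\tau_G)\big)\ge 0$. This identity is at once the main obstacle and the engine of the proof: from $\Phi_X\ge 0$ one gets $\Sigma_X\le m_X s$, hence $\tau_X(y)\le \lambda_X\nmax\, s$, hence $f_s\ge -C's$ with $C'=\nmax(\lambda_H+\lambda_W)+\beta_G$, so by Gronwall $s(t)\ge s(0)\,e^{-C't}>0$ whenever $s(0)>0$; in particular the trajectory stays uniformly away from the singular set $\{s=0\}$ and $\tau_{\overline X}(y)/s\le\lambda_{\overline X}\nmax$ remains bounded.

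For (ii), fix $T\ge 0$ and let $y_1,y_2$ be solutions on $[0,T]$ with common initial value $y^\star$, $s^\star>0$. By (i) both stay in $V$ and satisfy $s_j(t)\ge\delta:=s^\star e^{-C'T}>0$, hence both remain in the compact set $V\cap\{s\ge\delta\}$, on which $f$ is $C^1$ and therefore Lipschitz with some constant $L$; then $|y_1(t)-y_2(t)|\le L\int_0^t|y_1-y_2|\,du$, and Gronwall forces $y_1\equiv y_2$. Finally, for (iii) with $\varepsilon\in(0,1)$ I would verify $y^\star=y(0)$ from \eqref{eq:initialcdts} lies in $V$ with $s(0)=1-\varepsilon>0$: one has $s(0)+i(0)=1$, and the binomial identities $\sum_{S+I=n}\binom{n}{I}(1-\varepsilon)^{S}\varepsilon^{I}=1$ and $\sum_{S+I=n}S\binom{n}{I}(1-\varepsilon)^{S}\varepsilon^{I}=n(1-\varepsilon)$, weighted by $\pi^X_n$ and summed, give $\sum_\config n^X_{S,I}(0)\le 1$ and $\sum_\config S\,n^X_{S,I}(0)\le m_X(1-\varepsilon)$, i.e. $\Phi_X(0)\ge 0$ (the inequalities holding because $\config$ omits the $S=0$ and $n=1$ terms). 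Part (ii) then applies and yields the claimed uniqueness.
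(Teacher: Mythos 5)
Your overall architecture coincides with the paper's (face-by-face invariance of $V$, a differential identity for $\Delta_X := m_X s - \sum_{(S,I)\in\config} S\, n^X_{S,I}$, an exponential lower bound on $s$, Lipschitz-plus-Gronwall uniqueness on $V\cap\{s\ge \epsilon_T\}$, and binomial verifications for (iii)), but your central identity is wrong: the boundary corrections do \emph{not} all cancel. When the unique infected member of a structure in state $(1,1)$ recovers, the structure moves to state $(1,0)\notin\config$, so its susceptible member exits the sum $\Sigma_X=\sum_{\config} S\,n^X_{S,I}$ while still being counted in $m_X s$ (that individual remains susceptible). The correct computation — this is exactly the paper's Lemma \ref{lem:derivative-ineq} — is
\begin{equation*}
\frac{d}{dt}\Delta_X(t) = \gamma\, n^X_{1,1}(t) - \Big(\tau_G(t) + \frac{\tau_{\overline{X}}(t)}{s(t)}\Big)\Delta_X(t),
\end{equation*}
not the homogeneous equation you assert; correspondingly $\frac{d}{dt}\Sigma_X$ carries an extra term $-\gamma n^X_{1,1}$. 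Your explicit formula $\Delta_X(t)=\Delta_X(0)\exp(-\int_0^t(\tau_G+\tau_{\overline{X}}/s))$ is therefore false. The error happens to be benign: by Duhamel, $\Delta_X(t)$ equals your expression \emph{plus} $\gamma\int_0^t e^{-\int_u^t(\tau_G+\tau_{\overline{X}}/s)}\,n^X_{1,1}(u)\,du \ge 0$, and since $n^X_{1,1}\ge 0$ is preserved by your face argument, the conclusion $\Delta_X\ge 0$ still holds (the paper argues by subtangency: on the face $\{\Delta_X=0\}$ the derivative is $\gamma n^X_{1,1}\ge 0$). Everything downstream — $\tau_X\le \lambda_X\nmax\, s$, the bound $s(t)\ge s(0)e^{-C't}$, and the Gronwall step — survives once the identity is corrected. (Your telescoping for $\frac{d}{dt}\sum_\config n^X_{S,I}\le 0$, including the infection flux out of $\config$ through states $(1,I)$, is correct.)

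A second, smaller discrepancy concerns $s(0)=0$. You dispose of it by fiat, declaring the domain of $f$ to be $\{s>0\}$, which makes (ii) vacuous for data $y^*\in V$ with $s^*=0$ and quietly restricts (iii) to $\varepsilon<1$. The paper instead notes that the constraint $\sum_\config S\,n^X_{S,I}\le m_X s$ makes the ratios $\tau_{\overline{X}}\,n^X_{S,I}/s$ well defined even at $s=0$, and treats $s(0)=0$ as a genuine case: there $s\equiv 0$, all $n^X_{S,I}\equiv 0$, and $i'=-\gamma i$, so uniqueness is immediate. Since the proposition is meant to cover every $y^*\in V$ under the paper's reading of the Cauchy problem, you should either add this two-line degenerate case or state your domain restriction explicitly. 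On the positive side, your uniqueness step via ``$f$ is $\C^1$ on the compact convex set $V\cap\{s\ge\delta\}$, hence Lipschitz there'' is a cleaner route than the paper's hand-computed Lipschitz constants, and is valid since $V$ is a polytope.
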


The proof of this proposition is available in Appendix \ref{apx:proofs}. It relies on establishing the Lipschitz continuity of $f$ on $V$, from which uniqueness is deduced using Gronwall's lemma.

\subsubsection{Some properties of the limiting measure \texorpdfstring{$\eta$}{}}

The results of this section focus on the limiting measure $\eta$ given by Theorem \ref{thm:cvgce}, and will be useful for establishing Theorem \ref{thm:cvgce-sysdyn}. 

Let us introduce the following notations. For $f \in \C^1_b(\R_+ \times E, \R)$ and $T \geq t \geq 0$, define 
\begin{equation}
\label{eq:fTt-fI}
    f_{T,t}: x \in E \mapsto f(T, \Psi(x, T,t)) \text{ and } f_{T,t}^\I: x \in E \mapsto \angles{\nu}{f_{T,t}(\mathfrak{j}(x,\cdot))}.
\end{equation}
We further define, for $X \in \{H,W\}$ and $t \geq 0$, the following quantity which relates to the infectious pressure exerted on susceptibles outside of their structure of type $X$:
\begin{equation*}
    \Lambda_X(t) =  \frac{\lambda_{\overline{X}}}{s_{\overline{X}}(t)} \angles{\eta^{\overline{X}}_t}{\bfs \bfi} + \beta_G \frac{i_H(t)}{n_H}.
\end{equation*}
We can now state a result which is similar in spirit to Proposition \ref{prop:zeta(lip)}. Notice that it holds under the same Assumptions as Theorem \ref{thm:cvgce}, and is not restricted to the Markovian case.

\begin{prop}
    \label{prop:eta(l1loc)}
    Let $\eta$ be the unique solution in $\C(\R_+, \mathfrak{M}_1)$ of Equation \eqref{eq:defeta}. Then for any $T \geq 0$ and $t \in [0,T]$, for any measurable bounded function $f : \R_+ \times E \to \R$, it holds for $X \in \{H,W\}$ that 
        \begin{equation}
        \label{eq:eta(l1loc)}
            \angles{\eta^X_t}{f_{T,t}} = \angles{\eta^X_0}{f_{T,0}}  + \lambda_X \int_0^t \angles{\eta^X_u}{\bfs\bfi(f^\I_{T,u} - f_{T,u})}dt + \int_0^t \Lambda_X(u) \angles{\eta^X_u}{\bfs(f^\I_{T,u} - f_{T,u})}dt.
    \end{equation}   
    In particular, the application $t \mapsto \angles{\eta^X_t}{f_{T,t}}$ is continuous.
\end{prop}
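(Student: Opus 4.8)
The plan is to recognise that Equation \eqref{eq:eta(l1loc)} is exactly Equation \eqref{eq:defeta} tested against the \emph{time-reversed lift} of $f(T,\cdot)$ and stopped at the intermediate terminal time $t$, after which a functional monotone class argument removes all regularity requirements on $f$. Note first that $f_{T,t}$ and $f^\I_{T,t}$ depend on $f$ only through $g := f(T,\cdot)$, which for fixed $T$ is a bounded measurable function on $E$. I would therefore fix $T \geq 0$, $t \in [0,T]$ and $X \in \{H,W\}$, and introduce the linear functional $\Phi(g)$ equal to the difference between the right- and left-hand sides of \eqref{eq:eta(l1loc)}, where $f_{T,u}(x) = g(\Psi(x,T,u))$ and $f^\I_{T,u}(x) = \angles{\nu}{f_{T,u}(\mathfrak{j}(x,\cdot))}$. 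The goal is to show $\Phi(g) = 0$ for every bounded Borel $g$.

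The core is the case $g \in \C^1_b(E,\R)$. Here I set $\tilde f(u,x) = g(\Psi(x,T,u))$; as in the proof of Proposition \ref{prop:uniqueness}, $\tilde f \in \C^1_b(\R_+ \times E, \R)$ and the time reversal cancels the transport operator, so that $\A \tilde f_u \equiv 0$, while by construction $\tilde f_u = f_{T,u}$ and $\tilde f^\I_u = f^\I_{T,u}$. Applying \eqref{eq:defeta} to the test function $\tilde f$ with $t$ playing the role of the terminal time makes the $\A$-integral vanish and leaves a $\lambda_X$-term together with a $\lambda_{\overline X}$- and a $\beta_G$-term. Using $s_{\overline X}(u) = \angles{\eta^{\overline X}_u}{\bfs}$, $i_H(u) = \angles{\eta^H_u}{\bfi}$ and $n_H = \angles{\eta^H_0}{\bfn}$, the last two merge precisely into $\int_0^t \Lambda_X(u)\,\angles{\eta^X_u}{\bfs(f^\I_{T,u}-f_{T,u})}\,du$, giving $\Phi(g)=0$.

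To extend this, I would first treat $g \in \C_b(E,\R)$ using Lemma \ref{lem:density}, which yields $(g_k) \subset \C^1_b(E,\R)$ with $g_k \to g$ pointwise and $\sup_k \inftynorm{g_k} \leq \inftynorm{g}$; since $\Psi(\cdot,T,u)$ and $\mathfrak{j}$ are fixed maps and $\nu$ is a probability measure, the associated $f_{T,u}$ and $f^\I_{T,u}$ converge pointwise, and since each $\eta^X_u$ is a probability measure with $\bfs,\bfi \leq \nmax$ and $\Lambda_X(u) \leq \lambda_{\overline X}\nmax + \beta_G$, dominated convergence in both $E$ and $u$ passes to the limit in every term, so $\Phi(g)=\lim_k \Phi(g_k)=0$. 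Then $\mathcal H = \{g \text{ bounded Borel}: \Phi(g)=0\}$ is a vector space containing the constants (for $g\equiv 1$ one has $f^\I_{T,u}-f_{T,u}=0$ and $\angles{\eta^X_t}{1}=\angles{\eta^X_0}{1}$) and, by the previous step, the multiplicative and Borel-generating class $\C_b(E,\R)$; the same domination shows $\mathcal H$ is stable under uniformly bounded monotone limits, so the functional monotone class theorem gives $\mathcal H \supseteq$ all bounded Borel functions, which is \eqref{eq:eta(l1loc)} for arbitrary measurable bounded $f$. The continuity of $t\mapsto \angles{\eta^X_t}{f_{T,t}}$ then follows immediately, its right-hand side being $\angles{\eta^X_0}{f_{T,0}}$ plus integrals over $[0,t]$ of integrands bounded by constants depending only on $\inftynorm{g}$, $\nmax$ and the rates.

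I expect the $\C^1_b$ identity to be routine, since it merely reuses the cancellation $\A \tilde f = 0$ already exploited in Proposition \ref{prop:uniqueness} together with the bookkeeping that fuses the $\lambda_{\overline X}$- and $\beta_G$-terms into $\Lambda_X$. The only genuinely delicate step will be the passage to arbitrary measurable $f$: the care points there are justifying the interchange of the pointwise limit with the time integral uniformly in $u$ (relying on the uniform boundedness of $\eta^X_u$, $\bfs$, $\bfi$ and $\Lambda_X$), and confirming that $\C_b(E,\R)$ is a multiplicative class generating the Borel $\sigma$-field of the Polish space $E$ so that the monotone class theorem indeed applies.
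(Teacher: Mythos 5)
Your proof is correct, and its core coincides with the paper's: the paper likewise obtains \eqref{eq:eta(l1loc)} for smooth test functions by applying Equation \eqref{eq:defeta} to the time-reversed lift $\tilde f(u,x)=g(\Psi(x,T,u))$, for which $\A\tilde f\equiv 0$ (this is the content of Lemma \ref{lem:dphif}, reused exactly as in Propositions \ref{prop:zeta(lip)} and \ref{prop:uniqueness}), and then merges the $\lambda_{\overline X}$- and $\beta_G$-terms into $\Lambda_X$. Where you genuinely diverge is the extension to bounded measurable $f$. The paper mollifies $f$ directly, jointly in the time and $\tau$ variables, asserts pointwise convergence of the mollifications and concludes by dominated convergence; you instead go from $\C^1_b$ to $\C_b$ via Lemma \ref{lem:density} and then from $\C_b$ to bounded Borel via the functional monotone class theorem applied to $\mathcal H=\{g \text{ bounded Borel}:\Phi(g)=0\}$. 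Your route costs one extra step but is technically more robust: convolution of a merely bounded measurable function converges only at its Lebesgue points, i.e.\ Lebesgue-almost everywhere, which does not by itself yield convergence $\eta^X_u$-almost everywhere (these measures may charge Lebesgue-null sets, e.g.\ atoms at $\tau_j=0$), whereas monotone limits commute with integration against arbitrary finite measures. The points you flag as delicate — the uniform domination via $\bfs,\bfi\le\nmax$ and $\Lambda_X(u)\le\lambda_{\overline X}\nmax+\beta_G$, and the fact that $\C_b(E,\R)$ is a multiplicative class generating the Borel $\sigma$-field of the Polish space $E$ — all check out exactly as you describe.
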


\begin{proof}
    Start by noticing that for any $f \in \C^1_b(\R_+ \times E, \R)$, for $X \in \{H,W\}$,
    \begin{equation}
    \label{eq:eta(c1b)}
            \angles{\eta^X_t}{f_t} = \angles{\eta^X_0}{f_{t,0}}  + \lambda_X \int_0^t \angles{\eta^X_u}{\bfs\bfi(f^\I_{t,u} - f_{t,u})}dt + \int_0^t \Lambda_X(u) \angles{\eta^X_u}{\bfs(f^\I_{t,u} - f_{t,u})}du.
    \end{equation}
    Indeed, the proof of Equation \eqref{eq:eta(c1b)} follows the exact same lines as the proof of Proposition \ref{prop:zeta(lip)}, showing that for any $f \in \C^1_b(\R_+ \times E)$, Equation \eqref{eq:eta(c1b)} leads to Equation \eqref{eq:defeta} using Lemma \ref{lem:dphif}. 

    Consider now a measurable bounded function $f : \R_+ \times E \to \R$. Proceeding as in the proof of Lemma \ref{lem:density}, we consider a mollifier $\psi$ on $\R^{1 + \nmax}$ and let $\psi_k(t,\tau) = k^{\nmax + 1} \psi(kt,k\tau)$ for any $(t, \tau) \in \R \times \R^{\nmax}$. Letting $f_k(t,(n,s,\tau)) = f(\cdot, (n,s, \cdot)) * \psi_k(t, \tau)$, we obtain by convolution a sequence of smooth functions $(f_k)_{k \geq 1}$ which converges point-wise to $f$. 
    
    Then on the one hand, for any $t \in [0,T]$ and $x \in E$, as $k$ tends to infinity, $(f_k)_{T,t}(x)$ converges to $f_{T,t}(x)$, and further $(f_k)^\I_{T,t}(x)$ converges to $f^\I_{T,t}(x)$ by dominated convergence. Define $g_k \in \C^1_b(E,\R)$ by $g_k(t,x) = (f_k)_{T,t}(x)$, it then holds that $(g_k)_{t,u} = (f_k)_{T,u}$ and $(g_k)^\I_{t,u} = (f_k)^\I_{T,u}$. Thus applying Equation \eqref{eq:eta(c1b)} to $g_k$ and using dominated convergence as $k$ goes to infinity yields the desired result. 

    Finally, the continuity of $t \mapsto \angles{\eta^X_t}{f_{T,t}}$ on $[0,T]$ is a consequence of Equation \eqref{eq:eta(l1loc)}, as the integrands of the right-hand-side are bounded. 
\end{proof}

Proposition \ref{prop:eta(l1loc)} allows us to establish the following result under the assumption that $\nu$ is the exponential distribution. In particular, it implies that within each structure, at any time, the remaining infectious periods of currently infectious individuals are independent and identically distributed, of common law $\nu$. 

\begin{prop}
\label{prop:dist-infected}
    Assume that $\nu$ is the exponential law of parameter $\gamma$, and consider $\eta$ as defined in Theorem \ref{thm:cvgce} with initial condition $\eta_0 = \eta_{0, \varepsilon}$. Let $(X_n)_{n \geq 0}$ be a sequence of independent identically distributed random variables of common law $\nu$. Let $T \geq 0$, $n \in \bbrackets{1}{\nmax}$ and $s \in \bbrackets{0}{n-1}$. For any $m \in \bbrackets{0}{n-s}$, any functions $f \in \mathcal{B}_b(\R_+ \times \R^m)$, $g_1, \dots, g_{n-s-m} \in \mathcal{B}_b(\R_+ \times \R)$ and any $j_1 < \dots < j_m $ and $k_1 <  \dots < k_{n-s-m}$ such that $\{k_1, \dots k_{n-s-m}\} \cup \{j_1, \dots j_m\} = \bbrackets{1}{n-s}$, define
    {\small
    \begin{equation*}
         F(t,x) = \setind{\substack{\bfn(x) = n, \, \bfs(x)=s, \\ \tau_{j_\ell} (x) > 0 \; \forall 1 \leq \ell \leq m}} \big(f(t, \tau_{j_1}(x),\dots, \tau_{j_m}(x)) -  \E[f(t, X_1, \dots, X_m)] \big) \!\! \prod_{\ell=1}^{n-s-m} \!\!\! g_\ell(t, \tau_{k_\ell}(x)).
    \end{equation*}
    }
    Then 
    \begin{equation}
    \label{eq:prop-exp}
           \forall t \in [0,T], \quad \angles{\eta^X_t}{F_{T,t}} = 0.
    \end{equation}
\end{prop}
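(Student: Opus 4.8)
The plan is to apply Proposition~\ref{prop:eta(l1loc)} to the bounded measurable function $F$ and to show that $\phi(t):=\angles{\eta^X_t}{F_{T,t}}$ vanishes on $[0,T]$ for every admissible choice of parameters. Since infections never change the structure size $n$, I would fix $n$ and argue by \emph{downward} induction on $s$, adopting the convention that for $s=n$ (no infected coordinate) the only admissible functional is identically zero, which anchors the induction; by multilinearity of $F$ in $(f,g_1,\dots)$ one may normalise $\inftynorm{f}\le1$ and $\inftynorm{g_\ell}\le1$. Writing \eqref{eq:eta(l1loc)} for $F$ gives
\begin{equation*}
\phi(t) = \angles{\eta^X_0}{F_{T,0}} + \lambda_X\int_0^t \angles{\eta^X_u}{\bfs\bfi(F^\I_{T,u}-F_{T,u})}\,du + \int_0^t \Lambda_X(u)\,\angles{\eta^X_u}{\bfs(F^\I_{T,u}-F_{T,u})}\,du,
\end{equation*}
and the proof reduces to analysing these three contributions.

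First I would treat the \textbf{initial term}. As $F_{T,0}$ is supported on $\{\bfn=n,\,\bfs=s\}$, only that stratum of $\eta^X_{0,\varepsilon}$ contributes, and there the $n-s$ infected coordinates are i.i.d.\ of law $\nu$ by \eqref{eq:etastar}. Hence the coordinates indexed by $(j_\ell)$ and by $(k_\ell)$ are independent, so $\angles{\eta^X_0}{F_{T,0}}$ is a constant multiple of $\E\big[\prod_{\ell=1}^m\setind{X_\ell>T}(f(T,X_1-T,\dots,X_m-T)-c)\big]$ times the independent expectation over the $(k_\ell)$-coordinates, where $X_1,\dots,X_m$ are i.i.d.\ $\nu$ and $c=\E[f(T,X_1,\dots,X_m)]$. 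The lack-of-memory property of the exponential gives $\E[\prod_\ell\setind{X_\ell>T}f(T,X_1-T,\dots,X_m-T)]=e^{-\gamma m T}c$, which exactly cancels the contribution of $c$; thus the initial term vanishes.

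The heart of the argument is the \textbf{$F^\I$ terms}. A direct computation shows that $F^\I_{T,u}(x)=\int F_{T,u}(\mathfrak{j}(x,\sigma))\,\nu(d\sigma)$ is supported on $\{\bfn=n,\bfs=s+1\}$, the newly infected individual occupying coordinate $n-s$; being the largest index, it coincides with $j_m$ or with $k_{n-s-m}$. If it is a $k$-index, integrating $g_{n-s-m}(T,\sigma-(T-u))$ against $\nu$ yields a bounded constant, leaving a functional with susceptible count $s+1$, the same $m$, and one fewer $g$-factor. If it is $j_m$, the lack-of-memory property gives $\int\setind{\sigma>T-u}(f(T,\dots,\sigma-(T-u))-c)\,\nu(d\sigma)=e^{-\gamma(T-u)}(\bar f(T,\cdot)-c)$ with $\bar f(T,\cdot)=\E[f(T,\cdot,X)]$; since also $c=\E[\bar f(T,X_1,\dots,X_{m-1})]$, this is precisely the functional with susceptible count $s+1$, test function $\bar f$ and $m-1$ infectious coordinates. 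In both cases $F^\I_{T,u}$ is a bounded multiple of a functional of the required form at level $s+1$, and multiplying by $\bfi=\sum_k\setind{\tau_k>0}$ only yields finite sums of such functionals. By the induction hypothesis all of them vanish, so both $F^\I$-contributions are zero.

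It remains to handle the \textbf{$F_{T,u}$ terms} and conclude. On $\{\bfs=s\}$ one has $\bfs F_{T,u}=sF_{T,u}$ and, since the coordinates $j_\ell$ are automatically infectious there, $\bfi F_{T,u}=m F_{T,u}+\sum_{\ell=1}^{n-s-m}\setind{\tau_{k_\ell}>0}F_{T,u}$. Each $\setind{\tau_{k_\ell}>0}F_{T,u}$ is again a functional at the \emph{same} level $(n,s,m)$, only with $g_\ell$ replaced by $y\mapsto\setind{y>u-T}g_\ell(T,y)$, of the same sup-norm. Here lies the main obstacle: these extra terms live at the same level as $F$, so no term-by-term induction can close the estimate. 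I would resolve this by arguing uniformly over the data: set $\Phi(t)=\sup|\angles{\eta^X_t}{F_{T,t}}|$ over all level-$(n,s,m)$ functionals with $\inftynorm{f}\le1$ and $\inftynorm{g_\ell}\le1$ (a supremum of continuous, hence measurable, functions, bounded by $2$). Collecting the above and using that the $F^\I$ terms vanish,
\begin{equation*}
\Phi(t) \le \int_0^t\big(\lambda_X\,s\,(n-s)+s\,|\Lambda_X(u)|\big)\,\Phi(u)\,du,\qquad \Phi(0)=0,
\end{equation*}
whose integrand is bounded because $\Lambda_X\le\lambda_{\overline{X}}\nmax+\beta_G$. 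Gronwall's lemma then forces $\Phi\equiv0$, which completes the induction step and the proof.
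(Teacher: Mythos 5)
Your proof is correct, and its skeleton is the same as the paper's: apply Proposition \ref{prop:eta(l1loc)} to $F$, kill the initial term by the lack-of-memory property of $\nu$, identify $F^\I_{T,u}$ as a bounded multiple of a level-$(s+1)$ functional according to whether the newly infected index $n-s$ is a $j$-index or a $k$-index, run a downward induction on $s$, and conclude by Gronwall. There are two genuine differences, both in your favour. First, you anchor the induction at $s=n$, where every admissible functional vanishes identically; the paper anchors at $s=n-1$ and must treat the case $m=1$ by a separate memorylessness computation, which in your scheme becomes an ordinary induction step. Second, and more substantively, the paper closes the loop for one fixed $F$ via the inequality
\begin{equation*}
|\angles{\eta^X_t}{F_{T,t}}| \leq |\angles{\eta^X_0}{F_{T,0}}| + C \int_0^t \left(|\angles{\eta^X_u}{F_{T,u}}| + |\angles{\eta^X_u}{F^\I_{T,u}}|\right)du,
\end{equation*}
which implicitly uses the bound $|\angles{\eta^X_u}{\bfs\bfi F_{T,u}}| \leq \nmax^2 |\angles{\eta^X_u}{F_{T,u}}|$. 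Since $F_{T,u}$ changes sign and, as soon as there is at least one $k$-index, $\bfi$ is not constant on its support (the $k$-coordinates may or may not be infectious), this pointwise extraction of $\bfs\bfi$ from the pairing is not literally justified; it is valid only when $\bfi$ is constant there, as in the base case $s=n-1$ or when $m=n-s$. This is exactly the obstacle you name. Your repair — decomposing $\bfi F_{T,u}$ into finitely many same-level functionals with modified, still normalized, $g$'s, and applying Gronwall to the supremum $\Phi(t)$ over the whole normalized class (measurable as a supremum of continuous functions, by the continuity statement of Proposition \ref{prop:eta(l1loc)}) — closes this step rigorously, at the small cost of proving the statement for all data of a given level simultaneously rather than one functional at a time. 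So your write-up is best read as a tightened version of the paper's argument rather than a different proof: same decomposition and key lemma, but with the Gronwall step made uniform over the class of test functionals, which is what the induction actually requires.
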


\begin{proof}
    Let $n \in \bbrackets{1}{\nmax}$. Consider any $m$, $f$, $g$, $\{j_1, \dots, j_m\}$ and $\{k_1, \dots k_m\}$ satisfying the constraints given in the proposition, and define $F$ as above. Throughout the proof, we let $E_m(t) = \E[f(t, X_1, \dots, X_m)]$.

    Notice that $F$ satisfies the assumptions of Proposition \ref{prop:eta(l1loc)}. Letting $C = (\lambda_X \nmax + \lambda_{\overline{X}} \nmax + \beta_G) \nmax$, it follows from Equation \eqref{eq:eta(l1loc)} that:
    \begin{equation}
    \label{eq:gronwallF}
        |\angles{\eta^X_t}{F_{T,t}}| \leq |\angles{\eta^X_0}{F_{T,0}}| + C \int_0^t \left(|\angles{\eta^X_u}{F_{T,u}}| + |\angles{\eta^X_u}{F^\I_{T,u}}|\right)du.
    \end{equation}
    
    Let $(Y_k)_{k \geq 1}$ be a sequence of independent identically distributed random variables of common law $\nu$ which is independent from $(X_k)_{k \geq 1}$, and write $p^X_{n,s} = \pi^X_n \binom{n}{s} (1 - \varepsilon)^s \varepsilon^{n-s}$. Then on the one hand, Equation \eqref{eq:etastar} ensures:
    \begin{equation*}
    \begin{aligned}
        \angles{\eta^X_0}{F_{T,0}} = &p^X_{n,s} \big(\E[f(T, Y_{j_1} - T, \dots, Y_{j_m} - T)\setind{Y_{j_1} - T > 0, \dots, Y_{j_m} - T > 0}] \\
        & - \E[f(T, X_1, \dots, X_m)]\E[\setind{Y_{j_1} - T > 0, \dots, Y_{j_m} - T > 0}] \big) \prod_{\ell=1}^{n-s-m}\E[ g_\ell (T, Y_{k_\ell} - T)].
    \end{aligned}
    \end{equation*}
    Usual properties of the exponential distribution thus lead to $\angles{\eta^X_0}{F_{T,0}} = 0$. 

    On the other hand, let us compute $F^\I_{T,u}(x)$ for any $x \in E$.  Distinguishing the cases ${n-s} \in \mathbb{J}_m = \{j_1, \dots, j_m\}$ and ${n-s} \notin \mathbb{J}_m$ leads to:
    \begin{equation*}
    \begin{aligned}
         F^\I_{T,u}(x) = \setind{\substack{\bfn(x) = n, \, \bfs(x)=s+1, \\ \tau_{j} (x) > T-u \; \forall j \in \mathbb{J}_m \setminus \{n-s\}}} \big(& \setind{n-s \in \mathbb{J}_m} e^{-\gamma (T-u)} a_{T,u}(x) \\ 
         &+ \setind{n-s \notin \mathbb{J}_m} \E[g_{n-s}(X_{n-s} - (T-u))] b_{T,u}(x) \big) 
    \end{aligned}
    \end{equation*}
    where
    \begin{equation*}
    \begin{aligned}
        a(t, x)& = (\E[f(t,\tau_{j_1}(x),\dots, \tau_{j_{m-1}}(x), X_{n-s})] -  E_m(t) \big)
        \prod_{j \in \bbrackets {1}{n-s} \setminus \mathbb{J}_m} g_j(t, \tau_{k_j}(x)),\\
        b(t, x)&= \big(f(t,\tau_{j_1}(x),\dots, \tau_{j_m}(x)) -   E_m(t) \big) 
        \prod_{j \in \bbrackets {1}{n-s-1} \setminus \mathbb{J}_m} g_j(t, \tau_{k_j}(x)).
    \end{aligned}
    \end{equation*}

   We are now ready to proceed by induction on $s \in \bbrackets{0}{n-1}$.
   First, consider the case $s = n-1$. Then either $m = 0$ and the result is immediate, or $m = 1$ and $\mathbb{J}_1 = \{1\}$. Hence necessarily $n-s \in \mathbb{J}_1$ and $a(x) = 0$ as $\E[f(X_0 - (T-t))\setind{X_0 > T-t}] = \E[f(X_1)]$. Thus $F^\I_{T,t}(x) = 0$ for any $x \in E$, and Equation \eqref{eq:gronwallF} reduces to 
    \begin{equation*}
        |\angles{\eta^X_T}{F}| = |\angles{\eta^X_T}{F_{T,T}}| \leq C \int_0^T |\angles{\eta^X_t}{F_{T,t}}| dt.
    \end{equation*}
   Recalling that $t \mapsto \angles{\eta^X_t}{F_{T,t}}$ is continuous on $[0,T]$ according to Proposition \ref{prop:eta(l1loc)}, Gronwall's inequality ensures that $|\angles{\eta^X_T}{F_{T,t}}| = 0$ for every $t \in [0,T]$. 
   
   Suppose now that $s < n - 1$, and that the result holds for $s+1$. It then follows from the induction hypothesis that for any $u \in [0,T]$,
   \begin{equation*}
   \begin{aligned}
       \angles{\eta^X_u}{\setind{\substack{\bfn(x) = n, \, \bfs(x)=s+1, \\ \tau_{j} (x) > T-u \; \forall j \in \mathbb{J}_m \setminus \{n-s\}}} a_{T,u}(\cdot)} =  
       \angles{\eta^X_u}{\setind{\substack{\bfn(x) = n, \, \bfs(x)=s+1, \\ \tau_{j} (x) > T-u \; \forall j \in \mathbb{J}_m \setminus \{n-s\}}} b_{T,u}(\cdot)}  = 0.
    \end{aligned}
   \end{equation*}
   Hence $\angles{\eta^X_t}{F^\I_{T,u}} = 0$ for any $u \in [0,T]$, and Gronwall's inequality allows to conclude as previously.
\end{proof}

\subsubsection{Proof of Theorem \ref{thm:cvgce-sysdyn}}

From now on, we assume that $\eta_0 = \eta_{0, \varepsilon}$ as given by \eqref{eq:etastar}, and that $\nu$ is the exponential distribution with parameter $\gamma$.
Throughout this section, let $h$ be the Heaviside step function, \emph{i.e.} $h(z) = \setind{z > 0}$ for any real number $z$.

Let us establish the following proposition, which serves as a starting point of the proof of Theorem \ref{thm:cvgce-sysdyn}.

\begin{prop}
    \label{prop:sXt-iXt}
    Under the assumptions of Theorem \ref{thm:cvgce-sysdyn}, it holds that for any $X \in \{H,W\}$, $(s_X(t))_{t \geq 0}$ and $(i_X(t))_{t \geq 0}$ satisfy
    \begin{equation}
        \label{eq:sXt-iXt}
        \begin{aligned}
            \frac{d}{dt} s_X(t) &= - n_X\left( \frac{\lambda_X}{n_X} \angles{\eta^X_t}{\bfs\bfi} + \frac{\lambda_{\overline{X}}}{n_{\overline{X}}} \angles{\eta^{\overline{X}}_t}{\bfs \bfi} + \beta_G \frac{i_H(t)}{n_H} \frac{s_X(t)}{n_X} \right), \\
            \frac{d}{dt} i_X(t) &= - \frac{d}{dt} s_X(t) + \gamma i_X(t).
        \end{aligned}
    \end{equation}
    Further,
    \begin{equation}
        \label{eq:sX0-iX0}
        s_X(0) = (1 - \varepsilon)n_X\;\; \text{and} \;\; i_X(0) =\varepsilon n^X.
    \end{equation}
\end{prop}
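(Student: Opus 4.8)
The plan is to read off both evolution equations directly from the measure-valued identity of Proposition \ref{prop:eta(l1loc)}, applied to the two (non-smooth) observables $\bfs$ and $\bfi$. This is exactly why \eqref{eq:eta(l1loc)} was established for merely bounded measurable $f$: the maps $\bfs$ and $\bfi$ do not belong to $\C^1_b(\R_+\times E,\R)$, $\bfi$ being discontinuous. The crucial simplification in both cases is that the increment $f^\I_{T,u}-f_{T,u}$ entering \eqref{eq:eta(l1loc)} (with $f_{T,u}$, $f^\I_{T,u}$ as in \eqref{eq:fTt-fI}) turns out to be independent of the current state of the structure, so that the integrands factorize. I would first compute these increments, then derive a raw integral equation, and finally reconcile it with \eqref{eq:sXt-iXt}.

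For $\bfs$: since $\Psi$ leaves the susceptible count unchanged while $\mathfrak{j}$ lowers it by one, the choice $f=\bfs$ gives $f_{T,t}=\bfs$ and $f^\I_{T,u}-f_{T,u}\equiv-1$. Substituting into \eqref{eq:eta(l1loc)} and differentiating in $T$ yields the raw equation $\frac{d}{dt}s_X(t)=-\lambda_X\angles{\eta^X_t}{\bfs\bfi}-\Lambda_X(t)s_X(t)$. The local term $\lambda_X\angles{\eta^X_t}{\bfs\bfi}$ and the global term $\beta_G\frac{i_H(t)}{n_H}s_X(t)$ already coincide with the corresponding terms of \eqref{eq:sXt-iXt}; the only discrepancy is the cross-structure contribution, which appears here as $\frac{\lambda_{\overline X}}{s_{\overline X}(t)}\angles{\eta^{\overline X}_t}{\bfs\bfi}\,s_X(t)$ and must be matched to $\frac{\lambda_{\overline X}}{n_{\overline X}}\,n_X\angles{\eta^{\overline X}_t}{\bfs\bfi}$.

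This reconciliation is the heart of the argument, and it rests on the consistency relation $\frac{s_X(t)}{n_X}=\frac{s_{\overline X}(t)}{n_{\overline X}}$ (both being the population proportion of susceptibles). I would obtain it in one of two ways. Either pass to the limit in Lemma \ref{lemme:shsw}: dividing $K_HS_H(t)=K_WS_W(t)$ by $K$ gives $S^K_H(t)/N^K_H=S^K_W(t)/N^K_W$, and Theorem \ref{thm:cvgce} together with $N^K_X=K/K_X\to n_X$ lets me pass to the limit, the discontinuous observable $\bfi$ being handled by Proposition \ref{prop:cvgce-stepfct} for the analogous relation on $i_X$. Or, more self-containedly, set $D(t)=\frac{s_X(t)}{n_X}-\frac{s_{\overline X}(t)}{n_{\overline X}}$; the raw equations just derived show that $D$ solves a linear scalar equation $D'(t)=-a(t)D(t)$ whose coefficient is bounded on compacts, because the ratios $\angles{\eta^X_t}{\bfs\bfi}/s_X(t)$ stay below $\nmax$ (as $\bfi\le\nmax$); since $D(0)=0$, Gronwall forces $D\equiv0$. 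Substituting the relation into the raw equation then produces the first line of \eqref{eq:sXt-iXt}.

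For $\bfi$, take $f=\bfi$; then $f_{T,t}(x)=\sum_{k=1}^{n-s}\setind{\tau_k>T-t}$ counts the members infectious at time $t$ that are still infectious at $T$, while the jump $\mathfrak{j}$ inserts a fresh remaining period $\sigma\sim\nu$ whose contribution to this count is $\nu((T-u,+\infty))$. Hence $f^\I_{T,u}-f_{T,u}\equiv\nu((T-u,+\infty))=\e^{-\gamma(T-u)}$, where the exponentiality of $\nu$ is precisely what turns this tail into a pure exponential and is the only place the Markovian assumption enters. Combined with $\angles{\eta^X_0}{f_{T,0}}=i_X(0)\e^{-\gamma T}$ (at time $0$ the infectious periods are i.i.d.\ of law $\nu$ under $\eta_{0,\varepsilon}$, see \eqref{eq:etastar}), Proposition \ref{prop:eta(l1loc)} expresses $i_X(T)$ as a Duhamel-type integral with exponential kernel $\e^{-\gamma(T-u)}$ and forcing $\lambda_X\angles{\eta^X_u}{\bfs\bfi}+\Lambda_X(u)s_X(u)=-\frac{d}{du}s_X(u)$; differentiating this representation in $T$ gives the evolution equation for $i_X$ in \eqref{eq:sXt-iXt}. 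Finally, the initial conditions \eqref{eq:sX0-iX0} follow by evaluating $\angles{\eta^X_{0,\varepsilon}}{\bfs}$ and $\angles{\eta^X_{0,\varepsilon}}{\bfi}$ from \eqref{eq:etastar}, where the binomial susceptible and infected counts have means $n(1-\varepsilon)$ and $n\varepsilon$ and summation against $\pi^X$ returns $m_X=n_X$. The main obstacle is the consistency relation of the third paragraph: everything else is a direct substitution into Proposition \ref{prop:eta(l1loc)}, but matching the cross-structure infection terms genuinely requires reconciling the household- and workplace-indexed descriptions of one and the same limiting population.
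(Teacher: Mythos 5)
Your proposal is correct and follows essentially the same path as the paper: the paper obtains the $s_X$-equation by applying Equation \eqref{eq:defeta} directly to $\bfs$ (which lies in $\C^1_b(E,\R)$ with $\A\bfs=0$ and $\bfs^\I-\bfs\equiv-1$; your detour through Proposition \ref{prop:eta(l1loc)} is immaterial since that proposition generalizes this case), it handles $\bfi$ exactly as you do via Proposition \ref{prop:eta(l1loc)}, the identity $\bfi^\I_{T,t}-\bfi_{T,t}=\nu([T-t,\infty))=\e^{-\gamma(T-t)}$ and the Duhamel representation with $\angles{\eta^X_0}{\bfi_{T,0}}=\varepsilon n_X\e^{-\gamma T}$, and it computes the initial conditions against $\eta_{0,\varepsilon}$ as you indicate. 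The only point of genuine divergence is the consistency relation $s_X(t)/n_X=s_{\overline{X}}(t)/n_{\overline{X}}$: the paper uses precisely your first route (Lemma \ref{lemme:shsw} at finite $K$ plus convergence of $\angles{\zeta^{X|K}_t}{\bfs}$ and $\angles{\zeta^{X|K}_t}{\bfn}$, for which weak convergence suffices because $\bfs$ and $\bfn$ are continuous and bounded — note that Proposition \ref{prop:cvgce-stepfct} is not needed here, since only the susceptible relation enters this proposition, the global force of infection being expressed through $i_H/n_H$ for both values of $X$). Your second, intrinsic route is a valid alternative and arguably cleaner: setting $D=s_X/n_X-s_{\overline{X}}/n_{\overline{X}}$, the raw equations give $D'(t)=-\bigl(\lambda_X\angles{\eta^X_t}{\bfs\bfi}/s_X(t)+\lambda_{\overline{X}}\angles{\eta^{\overline{X}}_t}{\bfs\bfi}/s_{\overline{X}}(t)+\beta_G i_H(t)/n_H\bigr)D(t)$, whose coefficient is bounded by $(\lambda_X+\lambda_{\overline{X}})\nmax+\beta_G$ since $\bfs\bfi\leq\nmax\,\bfs$ pointwise, so $D(0)=0$ and Gronwall yield $D\equiv0$ without appealing back to the finite-population process; what this buys is self-containedness at the level of the limiting equation, at the (harmless) price of using the specific initial condition $\eta_{0,\varepsilon}$, which the proposition assumes anyway. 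One final remark: your Duhamel differentiation produces $\frac{d}{dt}i_X(t)=-\frac{d}{dt}s_X(t)-\gamma i_X(t)$, which is also what the paper's own proof derives and what is consistent with \eqref{eq:subeq-sysdyn-i}; the sign $+\gamma i_X(t)$ printed in \eqref{eq:sXt-iXt} is a typo in the statement, so do not adjust your computation to match it.
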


\begin{proof}
    Notice that $\bfs \in \C^1_b(E,\R)$ is such that $\A \bfs = 0$ and $\bfs^\I(x) - \bfs(x) = -1$ for all $x \in E$. It thus follows immediately from Equation \eqref{eq:defeta} that, for any $X \in \{H,W\}$,
    \begin{equation*}
        s_X(T) = s_X(0) - \int_0^T \left(\lambda_X \angles{\eta^X_t}{\bfs\bfi} + \lambda_{\overline{X}} \frac{s_X(t)}{s_{\overline{X}}(t)} \angles{\eta^{\overline{X}}_t}{\bfs \bfi} + \beta_G \frac{i_H(t)}{n_H} s_X(t) \right)dt.
    \end{equation*}
    Further, since $\eta \in \C(\R_+, (\mathfrak{M}_1, \tvnorm))$, it follows that for any $t \geq 0$, $(\zeta^K_t)_{K \geq 1}$ converges in law to $\eta_t$. As $\bfs$ is continuous and bounded on $E$, this implies that $\angles{\zeta^{X|K}_t}{\bfs} \to \angles{\eta^X_t}{\bfs}$ when $K$ tends to infinity. The analogous result holds for $\bfn$. Hence Lemma \ref{lemme:shsw} ensures that, for any $t \geq 0$, $s_X(t)/s_{\overline{X}}(t) = n^X/n_{\overline{X}}$. In other words,
    \begin{equation*}
        s_X(T) = s_X(0) - n_X\int_0^T \left( \frac{\lambda_X}{n_X} \angles{\eta^X_t}{\bfs\bfi} + \frac{\lambda_{\overline{X}}}{n_{\overline{X}}} \angles{\eta^{\overline{X}}_t}{\bfs \bfi} + \beta_G \frac{i_H(t)}{n_H} \frac{s_X(t)}{n_X}  \right) dt.
    \end{equation*}
    As $\eta \in \C(\R_+, (\mathfrak{M}_1, \tvnorm))$, and $\bfs$ and $\bfi$ are bounded measurable functions, it follows that the integrand is continuous with regard to $t$. Thus, the first line of Equation \eqref{eq:sXt-iXt} comes from the fundamental theorem of calculus.

    Recall that $\eta_0 = \eta_{0,\varepsilon}$ as defined in Equation \eqref{eq:etastar}. In particular, we now have $n_X= m_X$, hence
   \begin{equation}
   \label{eq:sX0}
       s_X(0) = \angles{\eta^X_{0,\varepsilon}}{\bfs} = \sum_{n=1}^{\nmax} \pi^X_n \sum_{s=0}^{n}s \binom{n}{s} (1-\varepsilon)^s \varepsilon^{n-s} = (1 - \varepsilon) \sum_{n=1}^{\nmax} n \pi^X_n = (1 - \varepsilon) n^X.
   \end{equation}
   This yields the first part of Equation \eqref{eq:sX0-iX0}.
    
    It remains to take an interest in $i_X(t)$. As $\bfi$ does not belong to $\C^1_b(E, \R)$ we cannot proceed in the same way. Remember that $\bfi(x) = \sum_{j=1}^{\nmax} h(\tau_j)$ for $x = (n,s,\tau) \in E$. Thus, we may apply Proposition \ref{prop:eta(l1loc)} and obtain that 
    \begin{equation}
    \label{eq:aux8}
        \begin{aligned}
            \angles{\eta^X_T}{\bfi} & = \angles{\eta^X_0}{\bfi_{T,0}} + \lambda_X \int_0^T \angles{\eta^X_t}{\bfs\bfi(\bfi^\I_{T,t} - \bfi_{T,t})}dt \\
            & + \lambda_{\overline{X}} \int_0^T \frac{1}{s_{\overline{X}}(t)} \angles{\eta^{\overline{X}}_t}{\bfs\bfi} \angles{\eta^X_t}{\bfs(\bfi^\I_{T,t} - \bfi_{T,t})}dt + \beta_G \int_0^T \frac{i_H(t)}{n_H} \angles{\eta^X_t}{\bfs(\bfi^\I_{T,t} - \bfi_{T,t})}dt.
        \end{aligned}
    \end{equation}
   Further, for any $\sigma \geq 0$, $T \geq t \geq 0$ and $x \in E$,
    \begin{equation*}
        i(\Psi(\mathfrak{j}(x,\sigma),T,t)) - i(\Psi(x,T,t)) = \setind{\sigma > (T-t)},
    \end{equation*}
    hence
    \begin{equation*}
        \bfi^\I_{T,t}(x) - \bfi_{T,t}(x) = \nu([T-t,\infty)) = \e^{-\gamma(T-t)}.
    \end{equation*}
    Injecting this into Equation \eqref{eq:aux8} and using as before that $s_X(t)/s_{\overline{X}}(t) = n_X/n_{\overline{X}}$ yields
    \begin{equation}
    \label{eq:aux8bis}
        i_X(T) = \angles{\eta^X_0}{\bfi_{T,0}} + n_X\e^{-\gamma T}\int_0^T \e^{\gamma t}\left( \frac{\lambda_X}{n_X} \angles{\eta^X_t}{\bfs\bfi} + \frac{\lambda_{\overline{X}}}{n_{\overline{X}}} \angles{\eta^{\overline{X}}_t}{\bfs \bfi} + \beta_G \frac{i_H(t)}{n_H} \frac{s_X(t)}{n_X}  \right) dt.
    \end{equation}
    As $\eta_0 = \eta_{0,\varepsilon}$, we may compute the first term of the right-hand side of this equation and obtain that  
   \begin{equation}
   \label{eq:iX0}
           \angles{\eta^X_0}{\bfi_{T,0}} = \angles{\eta^X_{0,\varepsilon}}{\bfi_{T,0}}
           = \e^{-\gamma T} \sum_{n=1}^{\nmax} \pi^X_n \sum_{s=0}^n \binom{n}{s} (1 - \varepsilon)^s \varepsilon^{n-s} (n-s) =  \e^{-\gamma T} \varepsilon n^X.
   \end{equation}
   Using the continuity of the integrand in Equation \eqref{eq:aux8bis}, we may now differentiate it with regard to $T$: 
   {\small
    \begin{equation*}
           \frac{d}{dT} i_X(T) = n_X \!\! \left( \!\frac{\lambda_X}{n_X} \angles{\eta^X_T}{\bfs\bfi} + \frac{\lambda_{\overline{X}}}{n_{\overline{X}}} \angles{\eta^{\overline{X}}_T}{\bfs\bfi} + \beta_G \frac{i^H_T}{n_H} \frac{s_X(T)}{n_X} \!\! \right)\!\! - \gamma i_X(T)
            = -\frac{d}{dT} s_X(T) - \gamma i_X(T).
   \end{equation*}
   }
   We thus have recovered the second line of Equation \eqref{eq:sXt-iXt}. Finally, the second half of Equation \eqref{eq:sX0-iX0} is obtained by a computation analogous to \eqref{eq:sX0}. This concludes the proof. 
\end{proof}

 For $(S,I) \in \config$, define the function $f^{S,I}: E \to \{0,1\}$ by 
    \begin{equation*}
        f^{S,I}(x) = \setind{\bfs(x) = S,\, \bfi(x) = I}.
    \end{equation*}
   For $t \geq 0$, let $n^X_{S,I}(t) = \angles{\eta^X_t}{f^{S,I}}$, which defines a continuous function on $\R_+$ as $f^{S,I}$ is bounded and measurable and $\eta \in \C(\R_+, (\mathfrak{M}_1, \tvnorm))$. In words, this corresponds to the proportion of structures of type $X$ which contain exactly $S$ susceptible and $I$ infected individuals. Notice that
    \begin{equation*}
        \{x \in E:   \bfs(x)\bfi(x) > 0\} = \{x \in E:   \bfs(x)\bfi(x) > 0, \; (\bfs(x),\bfi(x)) \in \config\}.  
    \end{equation*}
    We may thus rewrite the first line of Equation \eqref{eq:sXt-iXt} as follows:
    {\small
     \begin{equation*}
            \frac{d}{dt} s_X(t) =  - n_X\left( \frac{\lambda_X}{n_X} \sum_{(S,I) \in \config} SI n^X_{S,I}(t) + \frac{\lambda_{\overline{X}}}{n_{\overline{X}}} \sum_{(S,I) \in \config} SI n^{\overline{X}}_{(S,I)}(t) + \beta_G \frac{i_H(t)}{n_H} \frac{s_X(t)}{n_X}  \right).
    \end{equation*}
    }
    Similarly, it holds that
    \begin{equation*}
        \Lambda_X(t) =  \frac{\lambda_{\overline{X}}}{s_{\overline{X}}(t)} \angles{\eta^{\overline{X}}_t}{\bfs \bfi} + \beta_G \frac{i_H(t)}{n_H} =  \frac{\lambda_{\overline{X}}}{s_{\overline{X}}(t)} \sum_{(S,I) \in \config} SI n^{\overline{X}}_{(S,I)}(t) + \beta_G \frac{i_H(t)}{n_H},
    \end{equation*}
    which may also be written in terms of the notations of Equation \crefrange{eq:subeq-sysdyn-si}{eq:subeq-sysdyn-structures}:
    \begin{equation*}
        \Lambda_X(t) =  \left(\frac{s_{\overline{X}}(t)}{n_X}\right)^{-1} \tau_{\overline{X}}(t) + \beta_G \frac{i_H(t)}{n_H}.
    \end{equation*}

This motivates a closer study of the functions $n^X_{S,I}$ for $(S,I) \in \config$.

\begin{prop}
    \label{prop:nXt}
    Let $X \in \{H,W\}$ and $(S,I) \in \config$. Under the assumptions of Theorem \ref{thm:cvgce-sysdyn}, it holds that 
    \begin{equation}
   \label{eq:aux-sysdyn-nXSI}
    \begin{aligned}
       \frac{d}{dt} n^X_{S,I}(t) &= \gamma \left((I+1) n^X_{S,I+1}(t)\setind{S + I < \nmax} - I n^X_{S,I}(t) \right) \\
       &+ \lambda_X \left((S+1)(I-1)n^X_{S+1,I-1}(t)\setind{I \geq 1} - SI n^X_{S,I}(t)\right) \\
       &+ \Lambda_X(t) \left((S+1)n^X_{S+1,I-1}(t)\setind{I \geq 1} - S n^X_{S,I}(t)\right).
    \end{aligned}   
   \end{equation}
   Further
    \begin{equation}
    \label{eq:nX0}
    n^X_{S,I}(0) = \binom{S+I}{I} \pi^X_{S+I} (1 - \varepsilon)^S \varepsilon^I.
    \end{equation}
\end{prop}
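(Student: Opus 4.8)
The plan is to read off the evolution of $n^X_{S,I}(T) = \angles{\eta^X_T}{f^{S,I}}$ by differentiating a two-parameter quantity along its diagonal, in the same spirit as the extraction of the $-\gamma i_X$ term in the proof of Proposition \ref{prop:sXt-iXt}. I set
$$g(T,t) = \angles{\eta^X_t}{f^{S,I}_{T,t}}, \qquad 0 \le t \le T,$$
where $f^{S,I}_{T,t}(x) = f^{S,I}(\Psi(x,T,t))$ is the time-shifted function of \eqref{eq:fTt-fI}. Since $\Psi(x,T,T) = x$ we have $g(T,T) = n^X_{S,I}(T)$, so the goal becomes $\frac{d}{dT} n^X_{S,I}(T) = \partial_1 g(T,T) + \partial_2 g(T,T)$, a chain rule which is legitimate once both partials are continuous. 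The two partials will produce respectively the infection and the recovery lines of \eqref{eq:aux-sysdyn-nXSI}.

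For $\partial_2 g$ I apply Proposition \ref{prop:eta(l1loc)} to the bounded measurable function $f^{S,I}$ and differentiate in $t$, which is licit because the integrands are continuous; this gives
$$\partial_2 g(T,t) = \lambda_X \angles{\eta^X_t}{\bfs\bfi\big((f^{S,I})^\I_{T,t} - f^{S,I}_{T,t}\big)} + \Lambda_X(t)\angles{\eta^X_t}{\bfs\big((f^{S,I})^\I_{T,t} - f^{S,I}_{T,t}\big)}.$$
Evaluating at $t = T$ is the clean case: $f^{S,I}_{T,T} = f^{S,I}$, while $\mathfrak{j}(x,\sigma)$ always adds a currently infectious member, so $(f^{S,I})^\I_{T,T} = f^{S+1,I-1}\setind{I \ge 1}$. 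On the supports of $f^{S+1,I-1}$ and $f^{S,I}$ the weight $\bfs\bfi$ equals $(S+1)(I-1)$ and $SI$, and $\bfs$ equals $S+1$ and $S$. Hence $\partial_2 g(T,T)$ reduces exactly to the $\lambda_X$ and $\Lambda_X(T)$ lines of \eqref{eq:aux-sysdyn-nXSI}.

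For $\partial_1 g$ I use Proposition \ref{prop:dist-infected}: conditionally on a type-$X$ structure being in current type $(S,k)$ at time $t$, the remaining infectious periods of its $k$ infectious members are i.i.d. of law $\nu = \mathrm{Exp}(\gamma)$, so by memorylessness the number of them still infectious at time $T$ is $\mathrm{Bin}(k,p)$ with $p = \e^{-\gamma(T-t)}$. Writing $f^{S,I}_{T,t}(x) = \setind{\bfs(x)=S}\setind{\#\{j:\tau_j(x) > T-t\} = I}$ and expanding the second indicator as a symmetric function of the infectious coordinates, Proposition \ref{prop:dist-infected} yields
$$g(T,t) = \sum_{k \ge I} n^X_{S,k}(t)\binom{k}{I} p^I (1-p)^{k-I}.$$
Differentiating in $T$ using $\partial_T p = -\gamma p$ and then setting $t = T$, where $p = 1$, annihilates every term except $k = I$ and $k = I+1$, leaving
$$\partial_1 g(T,T) = \gamma\big((I+1)\, n^X_{S,I+1}(T)\,\setind{S+I < \nmax} - I\, n^X_{S,I}(T)\big),$$
the factor $\setind{S+I < \nmax}$ recording that $n^X_{S,I+1}$ only occurs when $(S,I+1) \in \config$. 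Adding the two contributions proves \eqref{eq:aux-sysdyn-nXSI}. The initial condition \eqref{eq:nX0} follows by direct integration against $\eta^X_{0,\varepsilon}$ from \eqref{eq:etastar}: every ever-infected member initially carries an $\nu$-distributed (hence a.s. positive) period, so $f^{S,I}$ selects precisely the structures of size $n = S+I$ with $S$ susceptibles, giving $\pi^X_{S+I}\binom{S+I}{I}(1-\varepsilon)^S\varepsilon^I$.

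The main obstacle is the rigorous passage from the indicator $f^{S,I}_{T,t}$ to the binomial expression for $g$, namely reducing it to the conditionally-i.i.d. form through Proposition \ref{prop:dist-infected}; this requires decomposing over the finitely many admissible sizes and over the subsets of infectious coordinates, tracking the time shift carefully, and invoking the absolute continuity of $\nu$ to discard the boundary events $\{\tau_j = T-t\}$. The subsidiary technical points are the joint continuity of $\partial_1 g$ and $\partial_2 g$ required to differentiate $g(T,T)$ along the diagonal, and the bookkeeping of the boundary cases $I = 0$, $I = \nmax - S$ together with the membership conditions in $\config$, which are exactly what generate the indicators $\setind{I \ge 1}$ and $\setind{S+I < \nmax}$.
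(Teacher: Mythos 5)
Your proof is correct, and it rests on exactly the two ingredients the paper's own proof uses: the integral identity of Proposition \ref{prop:eta(l1loc)} applied to $f^{S,I}$, and the binomial representation $\angles{\eta^X_t}{f^{S,I}_{T,t}} = \sum_{J= I}^{\nmax-S}\binom{J}{I}\e^{-\gamma(T-t)I}(1-\e^{-\gamma(T-t)})^{J-I}n^X_{S,J}(t)$ extracted from Proposition \ref{prop:dist-infected}. What differs is the organization of the differentiation, and your version is genuinely slicker. The paper writes out the full integral equation \eqref{eq:nxsiT}, which requires expanding $(f^{S,I})^\I_{T,t}$ into $f^{S+1,I-1}_{T,t}$ and $f^{S+1,I}_{T,t}$ with $T$-dependent exponential weights, then differentiates each term in $T$ through the Leibniz-type computations \eqref{eq:dtinit}, \eqref{eq:dtsif} and \eqref{eq:dtsf}, and finally regroups the $-\gamma I$ and $\gamma(I+1)$ contributions by recognizing \eqref{eq:nxsiT} again. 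Your diagonal chain rule $\frac{d}{dT}\,g(T,T) = \partial_1 g(T,T) + \partial_2 g(T,T)$ separates recovery from infection at the outset: $\partial_2 g$ comes from the dynamics via Proposition \ref{prop:eta(l1loc)} and collapses at $t=T$, where the exponential weights equal one and $(f^{S,I})^\I_{T,T} = f^{S+1,I-1}\setind{I\ge 1}$ exactly (since $\nu((0,\infty))=1$), while $\partial_1 g$ is read off the closed binomial formula, so neither differentiation under the integral sign nor the final regrouping is needed. The price is the two-variable chain rule, which requires both partials to exist and be continuous near the diagonal; this is available from the very same formulas — $\partial_1 g$ is a polynomial in $\e^{-\gamma(T-t)}$ with coefficients $n^X_{S,J}(t)$, continuous by the total-variation continuity of $\eta$, and the continuity in $u$ of the integrand defining $\partial_2 g$ follows from the weighted analogues of the binomial identity, which come for free because $\bfs\bfi$ equals the constant $SJ$ on each piece of the decomposition of $f^{S,I}_{T,t}$ over $J$ infectious members (and likewise $\bfs = S$); note that $\bfs\bfi f^{S,I}_{T,t}$ is not itself of the time-shifted form $F_{T,t}$, so this reduction to the weighted identities is the correct way to get the continuity, and you should make it explicit. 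With that point spelled out your argument is complete, and your treatment of the initial condition \eqref{eq:nX0} coincides with the paper's.
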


\begin{proof}
    Let us start by establishing the initial condition of Equation \eqref{eq:nX0}. Let $(S,I) \in \config$. We will make use of another expression of $f^{S,I}$, which will actually be of use throughout the proof.  For two integers $j \leq n$, let $\B(n,j)$ be the set of unordered subsets of $j$ elements chosen in $\bbrackets{1}{n}$. It then holds that for any $x \in E$,
    \begin{equation*}
        f^{S,I}(x) = \sum_{n=S+I}^{\nmax} \setind{\substack{\bfn(x)=n\\\bfs(x)=S}} \sum_{\bfv \in \B(n-S,I)} H^{n,S,\bfv}(\tau(x)),
    \end{equation*} 
    where, for $\bfv \in \B(n-S,I)$ and any $\tau \in \R^{\nmax}$,
    \begin{equation*}
        H^{n,S,\bfv}(\tau) = \prod_{j \in \bfv} h(\tau_j) \prod_{j \in \bbrackets{1}{n-S} \setminus \bfv} (1 - h(\tau_j)).
    \end{equation*}
    The idea behind this expression is that a structure of type $(n,s,\tau)$ contains $S$ susceptible and $I$ infected members if and only if $s = S$, and further exactly $I$ out of the $n-S$ first components of $\tau$ are positive. In other words, there exists at most one element $\bfv \in \B(n-S,I)$ for which the term in the sum is not equal to zero, in which case the set $\bfv$ corresponds to the indexes of infectious members, while $\bbrackets{1}{n-S} \setminus \bfv$ is the set of removed members.

    Let $\varepsilon > 0$. Throughout the following, for $X \in \{H,W\}$, $n \in \bbrackets{1}{\nmax}$ and $s \in \bbrackets{0}{n}$, let $p^X_{n,s} = \pi^X_n \binom{n}{s}(1 - \varepsilon)^s \varepsilon^{n-s}$. Notice that by definition of $\eta_{0,\varepsilon}$,
    \begin{equation*}
            n^X_{S,I}(0) 
            =  \sum_{n=S+I}^{\nmax} p^X_{n,S} \sum_{\bfv \in \B(n-S,I)} \angles{\nu}{h}^{\#\bfv} \angles{\nu}{1 - h}^{(n-S)-\#\bfv}.
    \end{equation*}
    Whenever $(n-S)-\#\bfv > 0$, the term vanishes as $\angles{\nu}{1 - h} = 0$. Hence only the case $\bfv=\bbrackets{1}{n-S}$ remains, which in turn corresponds to $n=S+I$ and leads to Equation \eqref{eq:nX0}.

    Next, let us apply Proposition \ref{prop:eta(l1loc)} to $f^{S,I}$. A brief computation, based on distinguishing the cases where $n-s \in \bfv$ or $n-s \notin \bfv$ for any $\bfv \in \B(n-S, I)$, yields
    \begin{equation*}
        \left(f^{S,I}\right)^\I_{T,t} = \setind{I \geq 1} \e^{-\gamma(T-t)}f^{S+1,I-1}_{T,t} + \setind{S+1 < \nmax}(1 - \e^{-\gamma(T-t)})f^{S+1,I}_{T,t}.
    \end{equation*}
    As a consequence, Proposition \ref{prop:eta(l1loc)} ensures that
    \begin{equation}
    \label{eq:nxsiT}
    \begin{aligned}
        n^X_{S,I}(T) &= \angles{\eta^X_0}{f^{S,I}_{T,0}} - \int_0^T \left(\lambda_X \angles{\eta^X_T}{\bfs \bfi f^{S,I}_{T,t}} + \Lambda_X(t) \angles{\eta^X_t}{\bfs f^{S,I}_{T,t}} \right) dt \\
        & + \setind{I \geq 1} \int_0^T \e^{-\gamma(T-t)} \left(\lambda_X \angles{\eta^X_T}{\bfs \bfi f^{S+1,I-1}_{T,t}} + \Lambda_X(t) \angles{\eta^X_t}{\bfs f^{S+1,I-1}_{T,t}} \right) dt \\
        & + \setind{S+I < \nmax} \int_0^T (1 -\e^{-\gamma(T-t)}) \left(\lambda_X \angles{\eta^X_T}{\bfs \bfi f^{S+1,I}_{T,t}} + \Lambda_X(t) \angles{\eta^X_t}{\bfs f^{S+1,I}_{T,t}} \right) dt.
    \end{aligned}
    \end{equation}
    We will thus focus on differentiating with respect to $T$ the different expressions composing the right-hand-side. 
    
    Let us start with the term $\angles{\eta^X_0}{f^{S,I}_{T,0}}$. By definition of $f^{S,I}_{T,0}$ and $\eta^X_{0, \varepsilon}$, it holds that
    \begin{equation*}
    	\angles{\eta^X_0}{f^{S,I}_{T,0}} = \sum_{n=S+I}^{\nmax} p^X_{n,S} \binom{I}{n-S} \e^{-\gamma IT} (1 - \e^{-\gamma T})^{n-S-I}.
    \end{equation*}
    
    Hence, using the Equality $(n-S-I)\binom{I}{n-S} = (I+1) \binom{n-S}{I}$, we obtain that 
    {\small
    \begin{equation*}
    	\frac{d}{dT} \angles{\eta^X_0}{f^{S,I}_{T,0}} = -\gamma I \angles{\eta^X_0}{f^{S,I}_{T,0}} + \gamma (I+1) \sum_{n=S+I+1}^{\nmax} p^X_{n,S} \binom{n-S}{I} \e^{-\gamma(I+1)T}(1 - \e^{-\gamma T})^{n-S-I-1}
    \end{equation*}
    }
    and we thus recognise that 
    \begin{equation}
    \label{eq:dtinit}
    	\frac{d}{dT} \angles{\eta^X_0}{f^{S,I}_{T,0}} = -\gamma I \angles{\eta^X_0}{f^{S,I}_{T,0}} + \gamma (I+1) \angles{\eta^X_0}{f^{S,I+1}_{T,0}}.
    \end{equation}
    
    Let us now focus on the remaining terms of the right-hand side of Equation \eqref{eq:nxsiT}. This motivates a closer study of $f^{S,I}_{T,t}$ for any $(S,I) \in \config$. By definition, for $x \in E$,
    \begin{equation*}
        f^{S,I}_{T,t}(x) = \sum_{n = S+I}^{\nmax} \setind{\substack{\bfn(x)=n\\\bfs(x)=S}} \! \sum_{\bfv \in \B(n-S,I)} \prod_{j \in \bfv} h(\tau_j(x) - (T-t)) \!\!\!\!\!\!\! \prod_{j \in \bbrackets{1}{n-S} \setminus \bfv} \!\!\!\!\!\!\! (1 - h(\tau_j(x) - (T-t))).
    \end{equation*} 
    In particular, $f^{S,I}_{T,t}(x) > 0$ requires that at time $t$, in a structure of type $x$, there are $J \geq I$ infected individuals, out of which exactly $I$ must have a remaining infectious period exceeding $(T-t)$. Hence, consider $(S,I)$ and $T$ to be fixed and define, for any $J \in \bbrackets{I}{\nmax - S}$ and $x \in E$,
    \begin{equation*}
        g_J(t,x) = \sum_{n = S+ J}^{\nmax} \setind{\substack{\bfn(x)=n\\\bfs(x)=S}} \sum_{\bfv_0 \in \B(n-S,J)} \sum_{\substack{\bfv \subseteq \bfv_0\\\#\bfv = I}} G^{n,\bfv_0, \bfv}(t, \tau(x)),
    \end{equation*}
    where, for $t \geq 0$ and $\tau \in \R^{\nmax}$,
    \begin{equation*}
        G^{n,\bfv_0, \bfv}(t, \tau) = \prod_{j \in \bfv}h(\tau_j - (T-t)) \prod_{j \in \bfv_0 \setminus \bfv} \setind{0 < \tau_j \leq T-t} \prod_{j \in \bbrackets{1}{n-S}\setminus \bfv_0} (1 - h(\tau_j)).
    \end{equation*}
    Dependence of $g_J$ and $G^{n,\bfv_0, \bfv}$ on $(S,I)$ and $T$ is omitted in these notations for readability. It then holds that 
    \begin{equation*}
        \forall x \in E, \quad f^{S,I}_{T,t}(x) = \sum_{J=I}^{\nmax - S} g_J(t,x).
    \end{equation*}
    Let $J \in \bbrackets{I}{\nmax - S}$, $\bfv_0 = \{j_1, \dots j_J\} \in \B(n-s, J)$ and $\bfv \subseteq \bfv_0$ such that $\# \bfv = I$. Proposition \ref{prop:dist-infected} applied to the functions $f : \R \times \R^J \to \R$ and $g_k : \R \times \R \to \R$ defined by 
    \begin{equation*}
    \begin{aligned}
        f(t, \tau_{j_1}, \dots, \tau_{j_J}) &= \prod_{j \in \bfv} h(\tau_j - (T-t)) \prod_{j \in \bfv_0 \setminus \bfv} \setind{0 \leq \tau_j \leq T-t}, \\
        g_k(t, \tau) &= 1 - h(\tau) \quad \forall k \in \bbrackets{1}{n-S-J},
    \end{aligned}
    \end{equation*}
    leads to the following equality, for all $t \in [0,T]$:
    \begin{equation*}
    \begin{aligned}
        \angles{\eta^X_t}{g_J(t,\cdot)} &= \!\! \sum_{n=S+J}^{\nmax} \sum_{\substack{\bfv_0 \in \B(n-S,J) \\ \bfv \subseteq \bfv_0 : \# \bfv = I}} 
         \!\!\! \e^{-\gamma(T-t)I}(1-\e^{-\gamma(T-t)})^{J-I} \angles{\eta^X_t}{\setind{\substack{\bfn(\cdot) = n\\\bfs(\cdot) = S}} H^{n,S,\bfv_0}(\tau(\cdot))} \\
        &= \binom{J}{I} \e^{-\gamma(T-t)I}(1-\e^{-\gamma(T-t)})^{J-I} n^X_{S,J}(t).
    \end{aligned}
    \end{equation*}
   As a consequence, we obtain in particular that
    \begin{equation*}
    	\angles{\eta^X_t}{\bfs \bfi f^{S,I}_{T,t}} = \sum_{J = I}^{\nmax - S} \binom{J}{I} \e^{-\gamma(T-t)I}(1-\e^{-\gamma(T-t)})^{J-I} SJ n^X_{S,J}(t).
    \end{equation*}
    Differentiating with respect to $T$ yields 
    \begin{equation*}
        \partial_T \angles{\eta^X_t}{\bfs \bfi f^{S,I}_{T,t}} = -\gamma I \angles{\eta^X_t} {\bfs \bfi f^{S,I}_{T,t}} + \setind{S+I < \nmax} \gamma (I+1) \angles{\eta^X_t} {\bfs \bfi f^{S,I+1}_{T,t}}.
    \end{equation*}
    In particular, $(T,t) \mapsto \partial_T \angles{\eta^X_t}{\bfs \bfi f^{S,I}_{T,t}}$ is continuous, as for any $(S,J) \in \config$,  $n^X_{S,J}$ is continuous thanks to the continuity of $t \mapsto \eta^X_t$ with regard to the total variation norm. Let $g : \R \to \R_+$ be  such that $t \mapsto g(t) \angles{\eta^X_t}{\bfs \bfi f^{S,I}_{T,t}}$ is continuous on $\R_+$ for any $(S,I) \in \config$. It then holds that
    \begin{equation}
    \label{eq:dtsif}
    	\begin{aligned}
    		\frac{d}{dT}\int_0^T g(t) \angles{\eta^X_t}{\bfs \bfi f^{S,I}_{T,t}} dt &= g(T) SI n^X_{S,I}(T) - \gamma I \int_0^T g(t) \angles{\eta^X_t}{\bfs \bfi f^{S,I}_{T,t}} dt \\
		& + \setind{S+I < \nmax} \gamma(I+1) \int_0^T g(t) \angles{\eta^X_t}{\bfs \bfi f^{S,I+1}_{T,t}} dt.
	\end{aligned}
    \end{equation}
    Similarly, for $g : \R \to \R_+$ such that $t \mapsto g(t) \angles{\eta^X_t}{\bfs f^{S,I}_{T,t}}$ is continuous on $\R_+$ for any $(S,I) \in \config$,
     \begin{equation}
     \label{eq:dtsf}
     	\begin{aligned}
    		\frac{d}{dT}\int_0^T g(t) \angles{\eta^X_t}{\bfs f^{S,I}_{T,t}} dt &= g(T) S n^X_{S,I}(T) - \gamma I \int_0^T g(t) \angles{\eta^X_t}{\bfs f^{S,I}_{T,t}} dt \\
		& + \setind{S+I < \nmax} \gamma(I+1) \int_0^T g(t) \angles{\eta^X_t}{\bfs f^{S,I+1}_{T,t}} dt.
	\end{aligned}
    \end{equation}
    
    In particular, we may apply Equations \eqref{eq:dtsif} and \eqref{eq:dtsf} to $g(t) = 1$, as well as $g(t) = \Lambda_X(t)$ and $g(t) = \e^{-\gamma t} \Lambda_X(t)$. Indeed, $t \mapsto \Lambda_X(t) S n^X_{S,I}(t)$ is continuous and well defined for any $t \geq 0$, thanks to the inequality $\angles{\eta^{\overline{X}}_t}{\bfs \bfi} S n^X_{S,I}(t) \leq (\nmax n_X/n_{\overline{X}}) s_{\overline{X}}(t)^2$ which ensures that despite the division by $s_{\overline{X}}$ in the definition of $\Lambda_X$, there are no singularities.
    
    In conclusion, Equation \eqref{eq:dtinit}, together with Equations \eqref{eq:dtsif} and \eqref{eq:dtsf}, allows to differentiate the right-hand-side of Equation \eqref{eq:nxsiT}. In particular, regrouping the terms factorised by $-\gamma I$ and $\gamma(I+1)$ allows to distinguish $-\gamma I n^X_{S,I}(T)$ and $\gamma(I+1) n^X_{S,I+1}(T)$, using Equation \eqref{eq:nxsiT}. This computation finally leads to Equation \eqref{eq:aux-sysdyn-nXSI}.  
\end{proof}

We may finally focus on the main result of this section, namely Theorem \ref{thm:cvgce-sysdyn}.

\begin{proof}[Proof of Theorem \ref{thm:cvgce-sysdyn}.]

    Before concluding, we need to emphasize that it would have been possible to chose $X = W$ when replacing $S$ and $I$ by $K_X S_X$ and $K_X I_X$ for $\mathcal{I}_G$ in Proposition \ref{prop:crochet}. All of the subsequent results still hold, simply replacing the household-related quantities in the definition of the rate for mean-field infections by their workplace-related counterparts. 
    
    As a consequence, Propositions \ref{prop:sXt-iXt} and \ref{prop:nXt} show that for any $X \in \{H,W\}$, 
    \begin{equation*}
    y_X = \left( \frac{s_X}{m_X}, \frac{i_X}{m_X}, n^X_{S,I}: (S,I) \in \config, n^{\overline{X}}_{(S,I)}: (S,I) \in \config \right)
    \end{equation*} 
    satisfies the Cauchy problem \eqref{eq:cauchypb} with initial condition \eqref{eq:initialcdts}. However, Proposition \ref{prop:sysdyn-sols} ensures uniqueness of the solutions to this Cauchy problem. It hence is sensible to define, for $t \geq 0$,
    \begin{equation*}
        s(t) = \frac{s_H(t)}{m_H} = \frac{s_W(t)}{m_W} \quad \text{and} \quad i(t) = \frac{i_H(t)}{m_H} = \frac{i_W(t)}{m_W}.
    \end{equation*}
    This leads to dynamical system \crefrange{eq:subeq-sysdyn-si}{eq:subeq-sysdyn-structures} with initial conditions \eqref{eq:initialcdts}, and concludes the proof. 
\end{proof}

\section*{Discussion}
\label{sec:discussion}
\addcontentsline{toc}{section}{\nameref{sec:discussion}}

This paper has focused on proposing a new reduction for an \emph{SIR} model with two levels of mixing, which explicitly includes households and workplaces. This reduced model was obtained as its large population limit, and the associated convergence of the stochastic model was established. 

A possible model extension would be to consider a local level of mixing containing an arbitrary, yet finite, number of layers. As long as within each layer, each node is part of exactly one clique, and as long as cliques within each layer are constituted independently from one another as in the case for households and workplaces, the adaptation of the aforementioned results is expected to be straightforward. In particular, for exponentially distributed infectious period lengths, the dimension of the corresponding dynamical system should still be of order $O(\nmax^2)$, implying that the model should remain tractable. 

Furthermore, we have compared the reduced model obtained in this work with the corresponding EBCM in the line of \cite{volzEffectsHeterogeneousClustered2011}. In the case of our household-workplace model with two levels of mixing, the EBCM seems the less appropriate choice, as it is less parsimonious and only approaches the epidemic well if the initial proportion of infected is very small. However, this may change if a more general contact structure within layers is considered, such as a configuration model for the global level, in which case it seems sensible to assume that EBCM-like equations will appear. 

\begin{figure}
    \centering
    \includegraphics[width=0.8\textwidth]{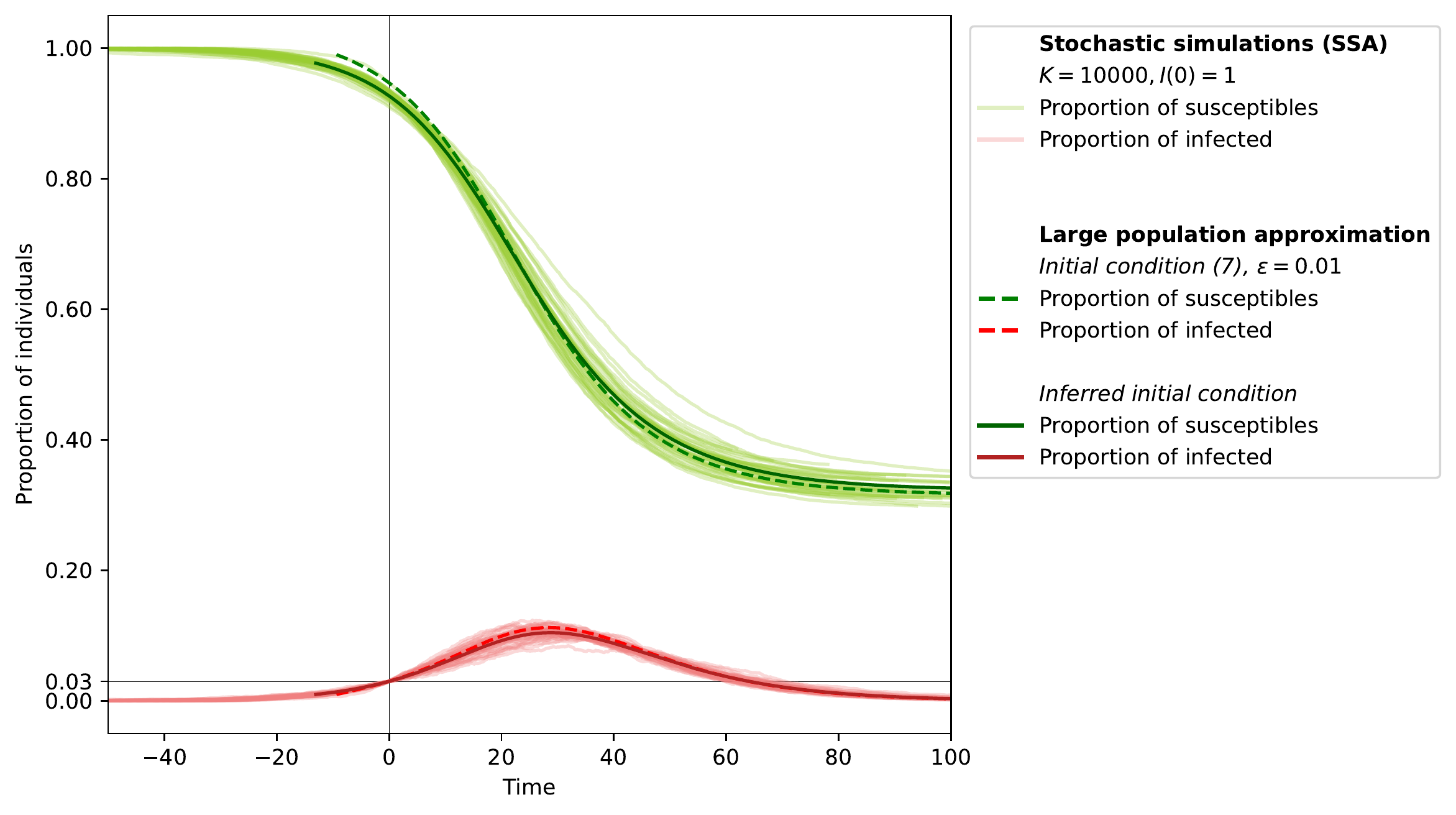}
    \caption{Comparison of the stochastic model starting from a single infected, with its large population approximation for two different choices of initial condition. The first initial condition is given by Equation \eqref{eq:initialcdts} for $\varepsilon = 0.01$. The second initial condition is obtained by simulating a large number ($>2000$) of stochastic epidemic trajectories, starting from a single infected until the proportion of infected reaches one percent. The initial condition corresponds to the average of the initial conditions observed in each simulation. 
    Regarding the stochastic model, for this figure, 100 epidemics starting from a single infected were simulated. Similarly to Figure \ref{fig:convergence}, only those reaching a threshold of 3\% of infected are represented, and a time shift is applied to ease comparison between model outputs.
    Structure size distributions are those of Figure \ref{fig:piX-insee}. Epidemic parameters: $(\beta_G, \lambda_H, \lambda_W, \gamma) = (0.085, 0.1, 0.001, 0.125)$, $R_0 = 1.7$.}
    \label{fig:y0}
\end{figure}

Finally, let us emphasize that by essence, the large population limit obtained here corresponds to a situation where the number of infected individuals is of the same order as the population size. In a realistic scenario, however, an epidemic is initiated by very few infected individuals. In the case of a large epidemic outbreak, the number of infected subsequently grows until it no longer is negligible when compared to the population size, at which point the large population limit correctly captures the dynamics of the outbreak. This raises the question: which initial condition is pertinent for the large population approximation? For uniformly mixing population, this is rather straightforward, for two main reasons. On the one hand, at each time, infected individuals are interchangeable in terms of infectious pressure exerted on susceptibles. On the other hand, the presence of recovered individuals at time $t$ can be neglected in the study of the epidemic dynamics over the time interval $[t, \infty)$, simply by restricting the study to all other individuals, which still constitute a uniformly mixing population. As a consequence, in such a setting, it makes sense to suppose that at time zero, there are only infected and susceptible individuals, and that infected individuals are chosen uniformly at random in the population, with independent and identically distributed infectious period lengths. 

This idea can of course be extended to our setting, and corresponds to the initial condition proposed in Theorem \ref{thm:cvgce-sysdyn}, while similar initial conditions have also been used in the literature in related settings \cite{volzEffectsHeterogeneousClustered2011, dilauroImpactContactStructure2021}. In our model however, one actually needs to know how infected and recovered individuals are distributed among households and workplaces, meaning that neither can recovered be ignored, nor is there any reason to believe that infected individuals are distributed uniformly at random in the population. Indeed, Figure \ref{fig:y0} illustrates that when compared to stochastic simulations starting from a single infected, the large population approximation with initial condition given by Equation \eqref{eq:initialcdts} fails to reproduce the epidemic dynamics, while they are correctly captured when using an initial condition which is inferred from stochastic simulations. As a consequence, it seems of interest to get a better understanding of this realistic initial condition, which arises from an epidemic started by a single infected. This may be achieved using a branching process approximation of the epidemic, which is designed to approach the initial, stochastic phase of the epidemic, and hence would represent a reduced model which complements the large population limit obtained in the present work.


\begin{appendix}

\section{Implementation of the large population limit}
\subsection{Automatic implementation of the dynamical system}
\label{apx:implementation}

It is possible to implement dynamical system \crefrange{eq:subeq-sysdyn-si}{eq:subeq-sysdyn-structures} in an automated way, in the sense that equations do not need to be written individually. 
The key lies in the fact that the set $\config$ can be constructed automatically, with an intrinsic organization of the states $(S,I)$ it contains. For example, one may arrange them by growing number $n$ of susceptible and infected members of the structure, and for each $n$, by growing number $i$ of infected, leading to 
\begin{equation*}
\config = \{(2,0), (1,1), \dots, (\nmax,0), (\nmax-1,1),\dots,(1,\nmax-1)\}.    
\end{equation*}
This in turn allows to make an explicit correspondence between any state $(S,I) \in \config$ and \emph{e.g.} some position in a vector containing all functions of our dynamical system of interest. A similar idea was already employed in \cite{pellisEpidemicGrowthRate2011}, for another purpose. With the previous structure of $\config$, one may for instance notice that for any $n \in \bbrackets{2}{\nmax}$ and $i \in \bbrackets{0}{n-1}$, the state $(n-i,i)$ is the $c(n-i,i)$-th state enumerated in $\config$, where $c(n-i,i) = (n-1)n/2 + i$.
As a consequence, the general expression of Equation \eqref{eq:subeq-sysdyn-structures} may be used to handle all the dynamics of the functions $n^X_{S,I}$, for $(S,I) \in \config$ and $X \in \{H,W\}$. 

Also, notice that in practice, household sizes tend not to be as big as workplace sizes. It thus makes sense to distinguish explicitly a maximal size for each type of structure. This allows to avoid implementing unnecessary equations corresponding \emph{e.g.} to household sizes that are not actually observed, and which thus artificially increase the dimension of the system.

\subsection{Computational performance}
\label{apx:runtime}

The aim of this section is to numerically assess the computational cost associated to solving the large dimensional dynamic system \crefrange{eq:subeq-sysdyn-si}{eq:subeq-sysdyn-structures} in comparison to stochastic simulations using Gillespie's algorithm, also referred to as SSA (stochastic simulation algorithm). In order to do so, the average execution times of one stochastic simulation (SSA) and of one resolution of the associated dynamic system using the ODE solver odeint from the scipy.integrate library are compared.

Let us start by describing the general procedure. Each of the two scripts (stochastic simulation or reduced model) is executed one hundred times, all runs being independent from one another. For each run and each script, the computation time of the script of interest is measured, as well as the computation time of a reference function (summing all integers up to one billion with a simple for-loop). The ratios of the runtimes of both the script of interest and the reference function are computed. Comparison of the computation times for the stochastic and the reduced model is then based on the comparison of the averages of those normalised runtimes. 

It remains to take an interest in the choice of the model parameters, namely the structure size distributions, the epidemic parameters \emph{i.e.} the contact rates $\beta_G$, $\lambda_H$, $\lambda_W$ and the removal rate $\gamma$, as well as the initial proportion of infected $\varepsilon$ and the time interval $[0,T]$ on which the epidemic is simulated. For the stochastic model, the population size $K$ will be fixed to ten thousand individuals. 
For all scenarios considered here, the structure size distributions will be those of Figure \ref{fig:piX-insee}, and the initial proportion of individuals will be set to $\varepsilon = 0.005$. Different values of the epidemic parameters will be considered, as to obtain scenarios that differ both in terms of $R_0$ and in terms of the proportions of infections occurring within the general population, within households or within workplaces, respectively referred to as $p_G$, $p_H$ and $p_W$. The removal rate $\gamma$ will be fixed at $0.125$, and only the contact rates will effectively vary. In total, ten different scenarios will be used, characterized by their values of $R_0 \in \{1.2, 1.4, 1.7, 2.0, 2.5\}$ and $(p_G, p_H,p_W) \in \{(0.2, 0.4, 0.4), (0.4, 0.4, 0.2)\}$. 

Finally, parameter $T$ will be chosen as follows. For each set of epidemic parameters detailed above, the reduced model is used to compute the time $T_*$ at which the epidemic falls below one percent of infected individuals in the population, after the epidemic peak. $T$ then is determined by rounding down $T_*$ to the closest multiple of five. 

\begin{figure}[ht]
    \centering
    \includegraphics[width=0.85\textwidth]{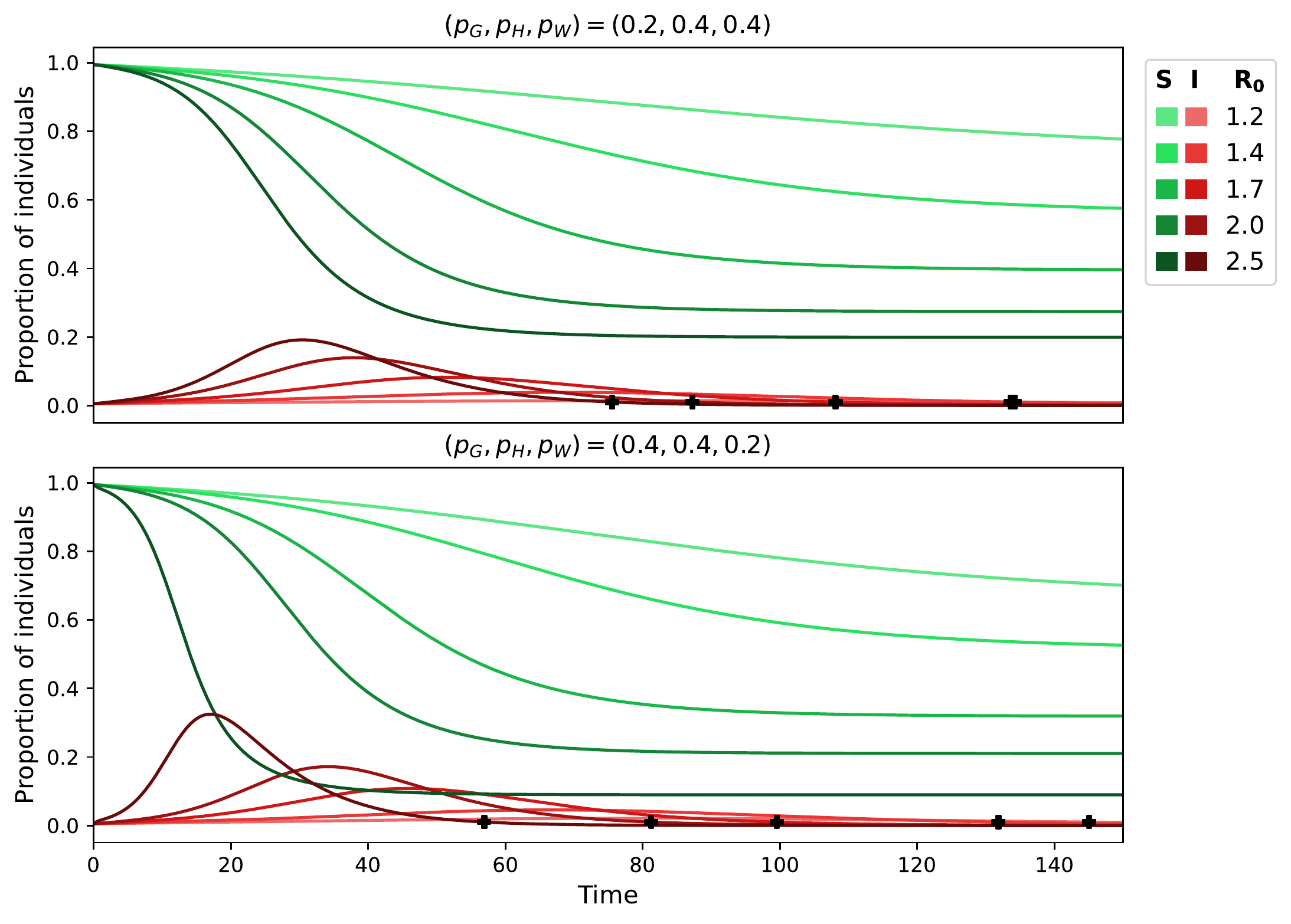}
    \caption{Proportion of susceptible (S) and infected (I) in the population, for each scenario detailed in Table \ref{tab:scenarios}, as given by dynamical system \crefrange{eq:subeq-sysdyn-si}{eq:subeq-sysdyn-structures}. Scenarios are separated by values of $(p_G, p_H, p_W)$ of infections per layer, namely $(0.2, 0.4, 0.4)$ and $(0.4, 0.4, 0.2)$ for the top and bottom panels, respectively. The corresponding values of $R_0$ are indicated by the color shades, as shown in the legend. The black crosses indicate for each curve that the proportion of infected falls below the threshold of one percent.}
    \label{fig:comp-scenarios}
\end{figure}

Figure \ref{fig:comp-scenarios} uses the reduced model to plot the trajectories of the proportion of susceptible and infected individuals in the population, for each scenario. The corresponding parameters are summarized in Table \ref{tab:scenarios}. Notice that in particular, this includes the parameters of Figure \ref{fig:convergence}.

\begin{table}[tb]
    \caption{Considered values of the contact rates and final times, grouped by value of $R_0$ and proportions of infections per layer $(p_G, p_H, p_W)$ characterizing the scenarios.}
    \label{tab:scenarios}
    \centering
    \begin{tabular}{c c c c c c c}
    \toprule
         & $R_0 = 1.2$ & $R_0 = 1.4$ & $R_0 = 1.7$ & $R_0 = 2.0$ & $R_0 = 2.5$ & $(p_G, p_H, p_W)$\\
        \midrule
        $\beta_G$ & 0.03 & 0.035 & 0.045 & 0.05 & 0.06 & \multirow{4}{6em}{(0.2, 0.4, 0.4)} \\
        $\lambda_H$ & 0.05 & 0.07 & 0.09 & 0.15 & 0.2 & \\
        $\lambda_W$ & 0.0015 & 0.0016 & 0.0018 & 0.002 & 0.0022 & \\
        $T$ & 130 & 130 & 105 & 85 & 75 & \\
        \midrule
        $\beta_G$ & 0.06 & 0.07 & 0.085 & 0.1 & 0.125 & \multirow{4}{6em}{(0.4, 0.4, 0.2)} \\
        $\lambda_H$ & 0.06 & 0.07 & 0.1 & 0.15 & 1.5 & \\
        $\lambda_W$ & 0.00075 & 0.0008 & 0.001 & 0.0011 & 0.00115 & \\
        $T$ & 145 & 130 & 95 & 80 & 55 & \\
    \bottomrule
    \end{tabular}
\end{table}

Let us now turn to the results. For each scenario of Table \ref{tab:scenarios}, measurement of average normalised computation times was repeated three times. The results are shown in Figure \ref{fig:runtime-ratios}, which indicates for each scenario the ratio of the average normalised runtime for one resolution of dynamical system \crefrange{eq:subeq-sysdyn-si}{eq:subeq-sysdyn-structures} over the average normalised runtime of one stochastic simulation. Let us first take an interest in the datasets labeled $(p_G, p_H, p_W) = (0.2, 0.4, 0.4)$ and $(p_G, p_H, p_W) = (0.4, 0.4, 0.2)$. One may notice first that for each scenario, the results of all three repeats are close to one another, indicating that the results are reproducible. Further, for both possible values of $(p_G, p_H, p_W)$, the results indicate a shared general trend. Indeed, for values of $R_0$ close to the critical case $R_0 = 1$, the ratio exceeds one, and diminishes subsequently, falling below one between $R_0 = 1.4$ and $R_0 = 1.7$ and attaining values of order $10^{-1}$. This behavior suggests that solving dynamic system \crefrange{eq:subeq-sysdyn-si}{eq:subeq-sysdyn-structures} is advantageous in terms of computation time for intermediate or high values of $R_0$, being up to one order of magnitude faster than one stochastic simulation. As the time interval $[0,T]$ on which the epidemic is studied originally depends on the scenario and is significantly shorter for larger values of $R_0$, one may wonder whether this difference influences the results. As a consequence, we have repeated the same procedure for all of the scenarios characterised by $(p_G, p_H, p_W) = (0.2, 0.4, 0.4)$, with fixed $T = 75$. Figure \ref{fig:runtime-ratios} shows that the associated results are very similar to those obtained previously, pleading against this hypothesis.

\begin{figure}[ht]
    \centering
    \includegraphics[width=0.8\textwidth]{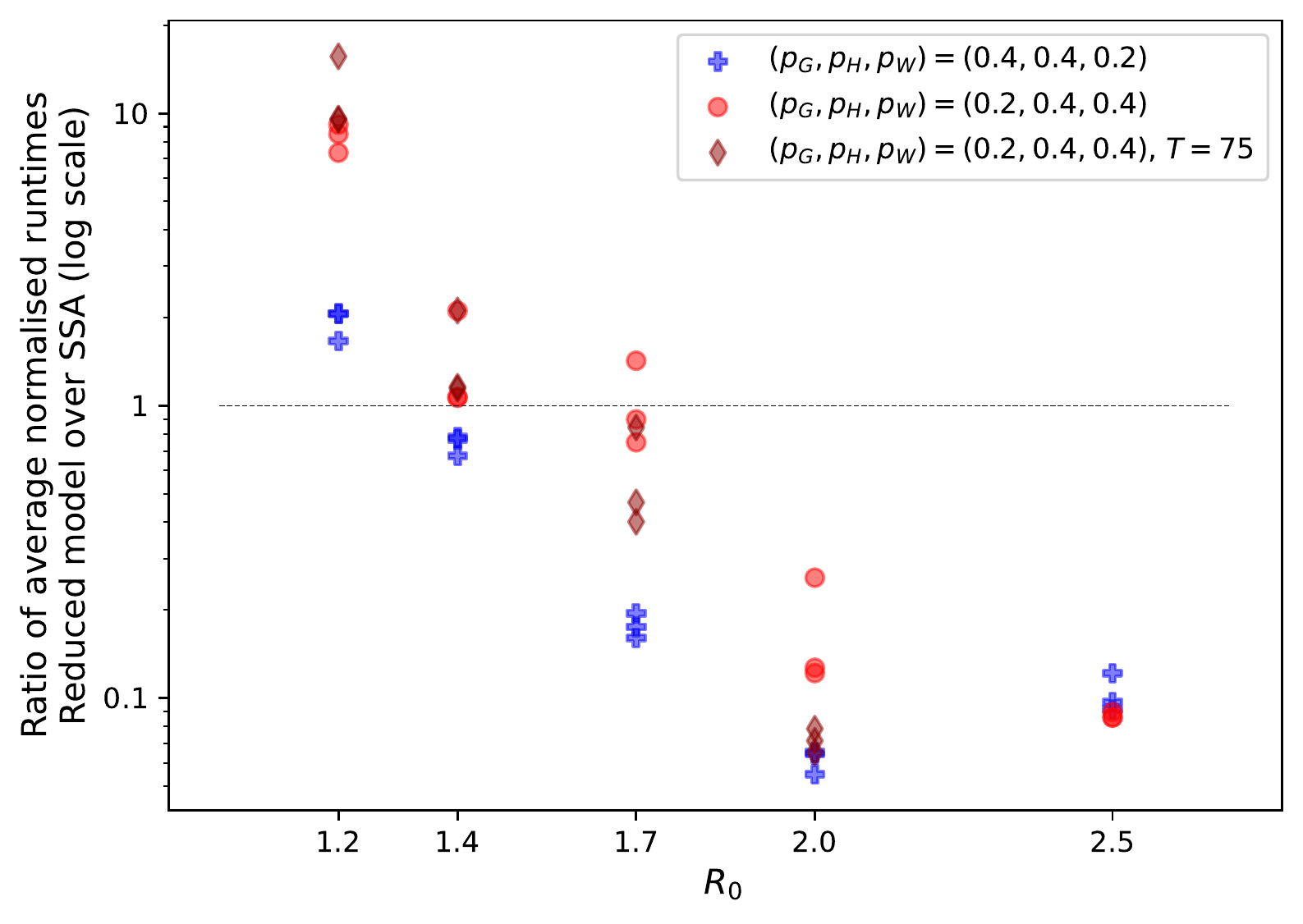}
    \caption{Ratio of the average normalised computation time for solving once dynamical system \crefrange{eq:subeq-sysdyn-si}{eq:subeq-sysdyn-structures} over the average normalised computation time for one stochastic simulation (SSA). This ratio was computed three times for each scenario of Table \ref{tab:scenarios}. The results are presented as a function of $R_0$, while colors indicate the value of $(p_G, p_H, p_W)$. Unless stated otherwise, the parameter $T$ from Table \ref{tab:scenarios} was used. The dotted line indicates the threshold of one.}
    \label{fig:runtime-ratios}
\end{figure}

\noindent Of course, this comparison could be pushed further. For instance, the most basic version of the SSA algorithm was used, and more advanced methods such as $\tau$-leaping are expected to accelerate stochastic simulations. Also, a more thorough exploration of the parameter space would be pertinent, assessing for instance the influence of the structure size distributions.

\section{Edge-based compartmental model}
\label{apx:ebcm} 

\subsection{Presentation of the EBCM}

\begin{figure}
    \centering
    \includegraphics[width=\textwidth]{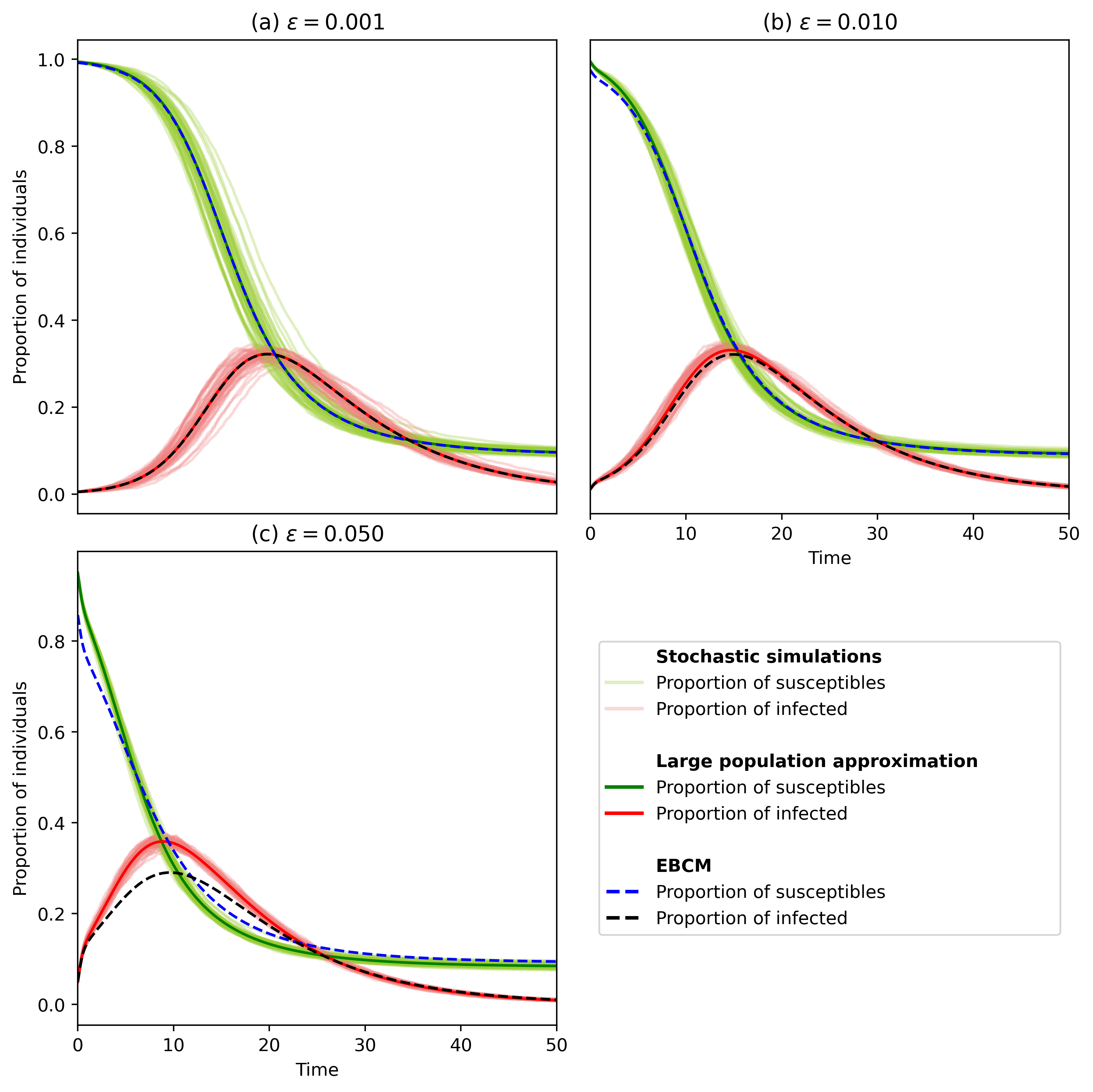}
    \caption{Comparison of the stochastic model with the large population approximation given by dynamical system \crefrange{eq:subeq-sysdyn-si}{eq:subeq-sysdyn-structures} and the corresponding EBCM. Household and workplace distributions are those of Figure \ref{fig:piX-insee}. Epidemic parameters are set to $(\beta_G, \lambda_H, \lambda_W, \gamma) = (0.125, 1.5, 0.00115, 0.125)$. Initial conditions correspond to $\varepsilon \in \{0.001, 0.01, 0.05\}$ as indicated for each panel. For each of these scenarios, Gillespie's algorithm is used to simulate $50$ trajectories of the stochastic model defined in Proposition \ref{def:zeta} in a population of $K = 10000$ individuals (faint lines). For Panel (a), only trajectories reaching a threshold proportion of $0.005$ infected are kept, and time is shifted so that time $0$ corresponds to the moment when this threshold is reached. 
    Finally, the deterministic solution $(s,i)$ of both dynamical system \crefrange{eq:subeq-sysdyn-si}{eq:subeq-sysdyn-structures} (thick lines) and the EBCM (dashed lines) are represented for each scenario. For Panel (a), the same time shifting procedure as for simulations is applied.}
    \label{fig:ebcm}
\end{figure}

Let us start by describing how to obtain the population structure of the local level of mixture described in Section \ref{sec:general-model} using a clique configuration model (CCM). In our case, each node belongs to exactly one clique within each layer (one household and one workplace, respectively). Let us briefly notice that whenever a node is picked uniformly at random, the probability of it belonging to a structure of type $X$ and size $n$ is given by $\hat{\pi}^X_n = n \pi^X_n/m_X$, for any $n \in \bbrackets{1}{\nmax}$ and $X \in \{H,W\}$. As a consequence, the layer corresponding to structures of type $X \in \{H,W\}$ is obtained by the following two steps. First, associate to each node a structure size distributed according to the size-biased law $\hat{\pi}^X$. This is done independently for each node. Second, for $k \in \bbrackets{1}{\nmax}$, form cliques of size $k$ by drawing uniformly without replacement $k$-tuples in the set of nodes of associated structure size $k$. This step stops when all nodes of associated clique size $k$ belong to a clique. 
This procedure is repeated independently for each layer, allowing to assemble households and workplaces.

Let us now turn to deriving the EBCM. Consider $s$ and $i$ the proportions of susceptible and infected individuals in the population, respectively. Let $\theta^X_n(t)$ for $X \in \{H,W\}$ and $n \in \bbrackets{1}{\nmax}$ be the chance of a susceptible belonging to a structure of type $X$ and size $n$ to escape infection within this structure, and $\theta^G(t)$ the chance of escaping infection through the mean-field level, up to time $t \geq 0$. The key idea is that a node is susceptible at time $t$ if and only if it has escaped infection up to time $t$, and the risks of infection within each layer are independent from one another. This makes use of properties of the CCM, which heuristically explain the decoupling of the risk of infection in the two local layers from one another. Further, the fact that in an infinite population, each individual structure has a negligible impact on the proportion of infected yields the intuition behind the decorrelation of the risks of infection at the local and global level. This leads to 
\begin{equation*}
    s = \theta^G  \prod_{X \in \{H,W\}} \left(\sum_{n=1}^{\nmax} \hat{\pi}^X_n \theta^X_n \right).
\end{equation*}
As we are considering an \emph{SIR} model, it follows that $i'(t) = -s'(t) + \gamma i(t)$, so that the difficulty resides in understanding the dynamics of $\theta^G$ and $\theta^X_n(t)$, for $X \in \{H,W\}$ and $n \in \bbrackets{1}{\nmax}$.

Define for $X \in \{H,W\}$ and $n \in \bbrackets{2}{\nmax}$:
\begin{equation*}
        m^X_n = \theta^G \hat{\pi}^X_n \theta^X_n \sum_{k = 1}^{n_{\max}}  \hat{\pi}^{\overline{X}}_{k} \theta^{\overline{X}}_k,
\end{equation*}
which corresponds to the proportion of individuals who are susceptible and belong to a structure of type $X$ and size $n$. Also, let $n^X_{(S,I,R)}$ be the proportion of susceptibles belonging to a structure of type $X$ containing exactly $S$ susceptibles, $I$ infected and $R$ removed individuals. This allows us to introduce the following quantities, which participate in the rates at which a member of a structure of type $X$ and size $n$ is infected, either within the considered structure or outside of it, respectively: 
\begin{equation*}
    T^X_n = \lambda_X \sum_{\substack{(S,I,R) \in \N^3 \\ S+I+R = n}} SI\,n^X_{(S,I,R)} \quad \text{and} \quad
    \tau^X_n = \left(\beta_G i + \sum_{k=1}^{\nmax} T^{\overline{X}}_{k} \right) \frac{\hat{\pi}^X_n \theta^X_n}{\sum_{k=1}^{\nmax} \hat{\pi}^X_{k} \theta^X_{k}}.
\end{equation*}
One obtains the following dynamics: 
\begin{equation*}
    \frac{d}{dt} \theta^G =-\beta_G i \theta^G, \quad \text{and} \quad
    \forall X \in \{H,W\}, \forall n \in \bbrackets{2}{\nmax},\; \frac{d}{dt} \theta^X_n = -\frac{T^X_n}{m^X_n} \theta^X_n.
\end{equation*}
Further, for any $X \in \{H,W\}$, $n \in \bbrackets{2}{\nmax}$ and $(S,I,R) \in \N^3$ such that $S+I+R = n$ and either $S \geq 2$ or $SI \geq 1$:
\begin{equation*}
    \begin{aligned}
        \frac{d}{dt} n^X_{(S,I,R)} &= -\left(\lambda_X SI +\frac{\tau^X_n}{m^X_n} S + \gamma I\right) n^X_{(S,I,R)} \\
        &+ \gamma(I+1)n^X_{(S,I+1,R-1)} \setind{R \geq 1}\\
        &+ \left(\lambda_X(S+1)(I-1)  +\frac{\tau^X_n}{m^X_n}(S+1)\right) n^X_{(S+1,I-1,R)} \setind{I > 1}.
    \end{aligned}
\end{equation*}
Additionally, as in a structure of size one, no infection may occur within the structure itself, $\theta^H_1$ and $\theta^W_1$ are constant over time. Finally, it remains to define the initial conditions. Following Volz \emph{et al.} \cite{volzEffectsHeterogeneousClustered2011},  we consider the case $\varepsilon \ll 1$. Then  the only quantities which are not null at time zero are: for any $X \in \{H,W\}$, $n \in \bbrackets{1}{\nmax}$ and $I \in \bbrackets{1}{n-1}$, 
\begin{equation}
    \begin{aligned}
        i(0) &= \varepsilon, \\
        \theta^G(0) = \theta^X_n(0)& = 1 - \varepsilon, \\
        n^X_{(n,0,0)} &= \frac{1}{n} \hat{\pi}^X_n (1 - \varepsilon)^n, \\
        n^X_{(n-I,I,0)} &= \hat{\pi}^X_n (1 - \varepsilon)^{n-I} \varepsilon^I.
    \end{aligned}
\end{equation}

The proportions of susceptible and infected as predicted by both the EBCM and dynamical system \crefrange{eq:subeq-sysdyn-si}{eq:subeq-sysdyn-structures} are shown in Figure \ref{fig:ebcm}, for different values of $\varepsilon$. Let us first notice that in the case $\varepsilon=0.001$, corresponding to Panel (a) of Figure \ref{fig:ebcm}, the solutions $(s,i)$ of both the EBCM and dynamical system \crefrange{eq:subeq-sysdyn-si}{eq:subeq-sysdyn-structures} are in perfect accordance, emphasizing the fact that for very small values of $\varepsilon$, the EBCM seems to yield the correct asymptotic population dynamics. However, for larger values of $\varepsilon$, the EBCM struggles to reproduce these dynamics. The problem for capturing the epidemic dynamics for higher values of $\varepsilon$ lies in the fact that defining the proper initial condition for the EBCM is not straightforward, leading to initial conditions consisting in an approximation which is only sensible whenever $\varepsilon$ is very small.

\subsection{Computational performance}
\label{apx:ebcm-runtime}

In order to compare the computation times needed to solve either dynamical system \crefrange{eq:subeq-sysdyn-si}{eq:subeq-sysdyn-structures} or the dynamical system associated to the EBCM which has been introduced above, we will proceed similarly as in Appendix \ref{apx:runtime}, making use of the ODE solver odeint from the scipy.integrate library in both cases. However, this time, only one parameter set will be used, corresponding to the parameters chosen for Panel (a) of Figure \ref{fig:convergence}. 
Further, the average normalised computation time is only computed once, instead of having three repeats as in Appendix \ref{apx:runtime}. Considering the relatively small fluctuations between repeats for all scenarios in Figure \ref{fig:runtime-ratios}, this is not expected to significantly affect the qualitative result. 

\begin{table}[tb]
    \caption{Numerical assessment of the computation time needed to solve either dynamical system \crefrange{eq:subeq-sysdyn-si}{eq:subeq-sysdyn-structures} or the EBCM introduced in Appendix \ref{apx:ebcm}. Model parameters: household and workplace size distribution from Figure \ref{fig:piX-insee}; $(\beta_G, \lambda_H, \lambda_W, \gamma) = (0.125, 1.5, 0.00115, 0.125)$; initial proportion of infected $\varepsilon = 0.005$; resolution of the numerical system over the time interval $[0,30]$.}
    \label{tab:runtime-ebcm}
    \centering
    \begin{tabular}{p{0.27\textwidth} p{0.1\textwidth}  p{0.14\textwidth} p{0.14\textwidth} p{0.14\textwidth}}
        \toprule
        & Runs & \multicolumn{3}{c}{Normalised runtimes} \\
        \cline{3-5}
        & & Average & Minimum & Maximum \\
        Dynamical system \crefrange{eq:subeq-sysdyn-si}{eq:subeq-sysdyn-structures} & 50 & 0.15 & 0.14 & 0.17 \\
        EBCM & 10 & 2076 & 1887 & 2254 \\
        \bottomrule
    \end{tabular}
\end{table}

The model parameters and the associated average runtimes are shown in Table \ref{tab:runtime-ebcm}. Due to the excessive computation needed to solve the EBCM, only 10 runs of this script were performed. However, considering that the average normalised runtime for solving the EBCM is several orders of magnitude higher than the average normalised runtime for solving dynamical system \crefrange{eq:subeq-sysdyn-si}{eq:subeq-sysdyn-structures}, this again is not expected to significantly alter the results. Finally, the computation times necessary for solving the EBCM are relatively homogeneous over all runs, indicating that the average computation time is not biased by an outlier.

\section{Proof of Proposition \ref{prop:sysdyn-sols}}
\label{apx:proofs}  

Let us start with the following lemma, which will be needed afterwards. 
\begin{lemma}
\label{lem:derivative-ineq}
    Consider a solution $y$ of dynamical system \crefrange{eq:subeq-sysdyn-si}{eq:subeq-sysdyn-structures} and let $\Delta(t) = m_X s(t) - \sum_{(S,I) \in \config} S n^X_{S,I}(t)$. Then 
    \begin{equation*}
        \frac{d}{dt} \Delta(t) = \gamma n^X_{1,1}(t) - \Big(\tau_G(t) + \frac{1}{s(t)}\tau_{\overline{X}}(t)\Big)\Delta(t).
    \end{equation*}
\end{lemma}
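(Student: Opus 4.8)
The plan is to prove the identity by direct differentiation, substituting the dynamical system and then telescoping the resulting sums over $\config$. Writing $\frac{d}{dt}\Delta(t) = m_X \frac{d}{dt}s(t) - \sum_{(S,I)\in\config} S\,\frac{d}{dt}n^X_{S,I}(t)$, I would insert Equation \eqref{eq:subeq-sysdyn-si} for $s'$ and Equation \eqref{eq:subeq-sysdyn-structures} for each $\frac{d}{dt}n^X_{S,I}$. It is convenient first to record that $\config = \{(S,I) : S \geq 1,\, I \geq 0,\, 2 \leq S+I \leq \nmax\}$, and to group the right-hand side into three families according to the underlying mechanism: the recovery terms (carrying the factor $\gamma$), the within-structure infection terms (carrying $\lambda_X$), and the external infection terms (carrying the common coefficient $\rho(t) := \tau_G(t) + \tau_{\overline{X}}(t)/s(t)$).

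Each family contains a loss term weighted by $n^X_{S,I}$ and a gain term weighted either by $n^X_{S,I+1}$ or by $n^X_{S+1,I-1}$. The core of the argument is to reindex the gain sums so as to align them with the loss sums. For the recovery family, the substitution $(S, I+1) \mapsto (S,I)$ turns the gain $\gamma\sum_{(S,I)\in\config,\,S+I<\nmax} S(I+1)\,n^X_{S,I+1}$ into $\gamma\sum SJ\,n^X_{S,J}$ over $\{S\geq1,\,J\geq1,\,3\leq S+J\leq\nmax\}$, which is the loss sum $\gamma\sum_{\config} SI\,n^X_{S,I}$ minus exactly the corner term $S+I=2$, namely $\gamma n^X_{1,1}$; the two sums cancel and only $-\gamma n^X_{1,1}$ survives in $\sum S\,\frac{d}{dt}n^X_{S,I}$. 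For the $\lambda_X$ and external families, the substitution $(S+1, I-1)\mapsto(S,I)$ (extended harmlessly to all of $\config$, since the reindexed summand carries a factor $S-1$ that vanishes at $S=1$) aligns the gain and loss sums on the same index set, and the algebraic identity $(S-1)S - S^2 = -S$ collapses them. Concretely this yields $-\lambda_X\sum_{\config} SI\,n^X_{S,I} = -m_X\tau_X$ for the within-structure family (recognising $\tau_X = \frac{\lambda_X}{m_X}\sum_{\config}SI\,n^X_{S,I}$), and $-\rho\sum_{\config} S\,n^X_{S,I}$ for the external one.

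I would then close the computation by recognising $\sum_{(S,I)\in\config} S\,n^X_{S,I} = m_X s - \Delta$ directly from the definition of $\Delta$, so that the external family contributes $-\rho(m_X s - \Delta)$, and by expanding $m_X s' = -m_X(\tau_X + \tau_{\overline{X}} + \tau_G s)$ from \eqref{eq:subeq-sysdyn-si}. Substituting everything into $\frac{d}{dt}\Delta = m_X s' - \sum_{\config} S\,\frac{d}{dt}n^X_{S,I}$, the terms $m_X\tau_X$, $m_X\tau_{\overline{X}}$ and $m_X s\tau_G$ cancel in pairs (using $\rho\, m_X s = m_X\tau_{\overline{X}} + m_X s\tau_G$), and what remains is precisely $\gamma n^X_{1,1} - \rho\Delta = \gamma n^X_{1,1}(t) - \big(\tau_G(t) + \tfrac{1}{s(t)}\tau_{\overline{X}}(t)\big)\Delta(t)$.

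The main obstacle is not conceptual but combinatorial: one must track carefully how the indicators $\setind{S+I<\nmax}$ and $\setind{I\geq1}$ interact with the index shifts, verifying both that the reindexed gain sum can be extended to the full range $\config$ without changing its value (because the added boundary terms carry a vanishing factor such as $S-1$ at $S=1$ or $I$ at $I=0$) and that the only genuine mismatch is the single corner term $(S,I)=(1,1)$, whose survival is exactly the source of the $\gamma n^X_{1,1}$ on the right-hand side. Keeping the bookkeeping of these boundary effects straight across the three families is where the care is needed.
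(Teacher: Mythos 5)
Your proposal is correct and follows essentially the same route as the paper: differentiate $\Delta$, substitute the system, and telescope the gain/loss sums via the index shifts $(S,I+1)\mapsto(S,I)$ and $(S+1,I-1)\mapsto(S,I)$, using that the boundary factors ($S-1$ at $S=1$, $SI$ at $I=0$) vanish so the only surviving mismatch is the corner term $(1,1)$, which produces $\gamma n^X_{1,1}$. The paper records exactly these two combinatorial identities (its Equations \eqref{eq:aux6} and \eqref{eq:aux7}) and then regroups, just as you do.
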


\begin{proof}[Proof of Lemma \ref{lem:derivative-ineq}.] Let $X \in \{H,W\}$. First, notice that
    \begin{equation*}
        \{(S+1,I-1): (S,I) \in \config, I \geq 1\} = \{(S,I) \in \config: S > 1\}.
    \end{equation*}
    As $S-1 = 0$ whenever $S=1$, we thus obtain that 
    \begin{equation}
    \label{eq:aux6}
        \begin{aligned}
            \sum_{(S,I) \in \config} & S^2 I n^X_{S,I} - \sum_{(S,I) \in \config} S(S+1)(I-1) n^X_{S+1,I-1}\setind{I \geq 1} \\
            & = \sum_{(S,I) \in \config} S^2 I n^X_{S,I} - \sum_{(S,I) \in \config} (S-1) S I n^X_{S,I}
             = \sum_{(S,I) \in \config} S I n^X_{S,I}.
        \end{aligned}
    \end{equation}
    Similarly, $\{(S,I+1): (S,I) \in \config, S + I < \nmax\} = \config \setminus \left\{\{(S,I) \in \config: I = 0\}\cup \{(1,1)\}\right\}$.
    As $SI = 0$ whenever $I=0$, it follows that
    \begin{equation}
    \label{eq:aux7}
        \sum_{(S,I) \in \config} \gamma SI n^X_{S,I} - \sum_{(S,I) \in \config} \gamma S(I+1) n^X_{S,I+1}\setind{S + I < \nmax} = \gamma n^X_{1,1}.
    \end{equation}
    The desired conclusion then results directly from Equations \crefrange{eq:subeq-sysdyn-si}{eq:subeq-sysdyn-structures}, regrouping the terms of the form of Equations \eqref{eq:aux6} and \eqref{eq:aux7} in order to simplify the expression.
\end{proof}    

We are now ready to focus on the desired result.

\begin{proof}[Proof of Proposition \ref{prop:sysdyn-sols}.]
    (i) By assumption, $y(0) \in V$. Let us start by checking that all components of $y$, as well as $\Delta$, stay non-negative over time.
    
    Let $t_0 \geq 0$ be such that $y(t_0) \in V$. If $i(t_0) = 0$, then $i'(t_0) = -s'(t_0) \geq 0$ by assumption, which ensures that $i$ will not become negative on a neighbourhood of $t_0$. Similar arguments hold for the lower bounds of $\Delta$ and $s$, using Lemma \ref{lem:derivative-ineq} and Inequality \eqref{eq:aux-ineq}, respectively.
    
    Let us now turn our attention to $n^X_{S,I}$ for $X \in \{H,W\}$ and $(S,I) \in \config$. Recall from Equation \eqref{eq:subeq-sysdyn-structures} that its derivative may be ill defined, due to the division by $s(t)$. However, inequality \eqref{eq:aux-ineq} ensures that the ratios $\tau_{\overline{X}} n^X_{S,I}/s$ are well defined at all time, for any $(S,I) \in \config$. As a consequence, we may now notice as previously that, if $n^X_{S,I}(t_0) = 0$, Equation \eqref{eq:subeq-sysdyn-structures} ensures that $\frac{d}{dt} n^X_{S,I}(t_0) \geq 0$. 
    
    The desired conclusion follows: whenever either of the quantities of interest reach zero, their derivatives are non-negative which ensures that they do not become negative shortly thereafter.
    
    Next,  let us have a look at the upper bounds. For $X \in \{H,W\}$, a brief computation yields $$\frac{d}{dt} \sum_{(S,I) \in \config} n^X_{S,I}(t) = -\gamma n^X_{1,1}(t) \leq 0.$$ This assures that starting from $y^* \in V$, the inequality $\sum_{(S,I) \in \config} n^X_{S,I}(t) \leq 1$ holds. For $X \in \{H,W\}$ and $(S,I) \in \config$, it follows that if $n^X_{S,I}(t_0) = 1$, then for any $(S',I') \in \config \setminus{(S,I)}$, $n^X_{S',I'}(t_0) = 0$. Thus 
    \begin{equation*}
        \frac{d}{dt}n^X_{S,I}(t_0) = -\left(\lambda_X SI + \tau_{\overline{X}}(t_0) \frac{S}{s(t_0)} + \tau_G(t_0) S + \gamma I\right) \leq 0,
    \end{equation*}
    from which one may deduce that $n^X_{S,I}$ remains less than or equal to one. 
    The remaining upper bounds on $s$, $i$ and $s+i$ may be obtained using similar arguments.
    
    We thus have established that if $y^* \in V$, then $y(t) \in V$ for all $t$ such that $y$ is well defined.
    \bigskip
    
    (ii) Consider any $T \geq 0$. Let $y = (s,i,n^X_{S,I}: X \in \{H,W\}, (S,I) \in \config)$ be a solution to the Cauchy problem \eqref{eq:cauchypb} with initial condition $y^* \in V$. Then it follows from Inequality \eqref{eq:aux-ineq} that 
    \begin{equation*}
        s'(t) \geq  -[(\lambda_H + \lambda_W)\nmax + \beta_G]s(t),
    \end{equation*}
    and as further $s \in \C^1(\R_+)$ according to the Cauchy problem, we obtain by comparison that $s(t) \geq s(0)\exp(-[(\lambda_H + \lambda_W)\nmax + \beta_G]t)$ for any $t \in [0,T]$. As a consequence,  on $[0,T]$, $s$ is bounded from below by 
    \begin{equation*}
        \epsilon_T = s(0) \exp(-[(\lambda_H + \lambda_W)\nmax + \beta_G]T).
    \end{equation*}
    
    In order to prove that there exists at most a unique solution $y$ for any initial condition $y^* \in V$, we will distinguish two cases. 
    
    First, if $s(0)=0$, then it follows that $s(t) = 0$ for any $t \geq 0$, and hence $n^X_{S,I}(t) = 0$ for any $t \geq 0$ and any $X \in \{H,W\}$, $(S,I) \in \config$. Subsequently, the equation for $i$ reduces to $i'(t) = -\gamma i(t)$ on $\R_+$, ensuring uniqueness of $y$ on $\R_+$.

    Second, if $s(0) > 0$, it follows from (i) and from our lower bound on $s$ over $[0,T]$ that $y(t) \in V_T = V \cap \{s \geq \epsilon_T\}$ for any $t\in [0,T]$. Our aim is to show that $f$ is Lipschitz continuous on $V_T$. Let $y = (s,i,n^X_{S,I}: X \in \{H,W\}, (S,I) \in \config)$ and $\hat{y} = (\hat{s},\hat{i},\hat{n}^X_{S,I}: X \in \{H,W\}, (S,I) \in \config)$ be two elements of $V$. First, consider $f_s$. Let $X \in \{H,W\}$, and define $c_X = \lambda_X \#\config (\nmax)^2/m_X$, then 
    \begin{equation*}
        \lvert \tau_X(y) - \tau_X(\hat{y}) \rvert \leq \frac{\lambda_X}{m_X} \sum_{(S,I) \in \config} SI\lvert n^X_{S,I} - \hat{n}^X_{S,I} \rvert \leq c_X \inftynorm{y - \hat{y}}.
    \end{equation*}
    It further holds that 
    \begin{equation*}
        \lvert \tau_G(y)s - \tau_G(\hat{y})\hat{s} \rvert \leq \beta_G(\lvert i \rvert \lvert s - \hat{s} \rvert + \lvert i - \hat{i} \rvert \lvert \hat{s} \rvert) \leq 2 \beta_G \inftynorm{y - \hat{y}}.
    \end{equation*}
    Thus, letting $c_s = c_H + c_W + 2\beta_G$, it follows that 
    \begin{equation*}
         \lvert f_s(y) - f_s(\hat{y}) \rvert \leq c_s \inftynorm{y - \hat{y}}.
    \end{equation*}
    Similarly, letting $c_i = c_s + \gamma$, 
    \begin{equation*}
        \lvert f_i(y) - f_i(\hat{y}) \rvert \leq \lvert f_s(y) - f_s(\hat{y}) \rvert + \gamma \lvert i - \hat{i} \rvert \leq c_i \inftynorm{y - \hat{y}}.    
    \end{equation*}
    It remains to focus on $f_{X,S,I}$. Proceeding as above, letting $c'_X = (2 \lambda_X \nmax + 4 \beta_G + \gamma)\nmax$, we obtain that 
    \begin{equation*}
    \begin{aligned}
        \lvert f_{X,S,I}(y) - f_{X,S,I}(\hat{y}) \rvert &\leq c'_X \inftynorm{y - \hat{y}} + \left\lvert \frac{\tau_{\overline{X}}(y)}{s}Sn^X_{S,I} -  \frac{\tau_{\overline{X}}(\hat{y})}{\hat{s}}S \hat{n}^X_{S,I}\right\rvert \\
        &+ \left\lvert \frac{\tau_{\overline{X}}(y)}{s}(S+1)n^X_{S+1,I-1} -  \frac{\tau_{\overline{X}}(\hat{y})}{\hat{s}}(S+1) \hat{n}^X_{S+1,I-1}\right\rvert \setind{I \geq 1}.
        \end{aligned}
    \end{equation*}
    Notice that as $y$ belongs to $V_T$, it follows that for $k_X = \lambda_X \nmax m_X$,
    \begin{equation*}
        (\tau_{\overline{X}}(y) S n^X_{S,I}, s) \in D_T = \{(x,y) : \epsilon_T \leq y \leq 1,\; 0 \leq x \leq k_X y^2\}.
    \end{equation*}
    Let $(x,y)$ and $(u,v)$ be two elements of $D_T$. It then holds that 
    \begin{equation*}
        \left\lvert \frac{x}{y} - \frac{u}{v} \right\rvert \leq \frac{1}{v}\left(\frac{x}{y} \lvert v - y \rvert + \lvert x - u \rvert\right) \leq \epsilon^{-1}_T (1 \vee k_X) (\lvert v - y \rvert + \lvert x - u \rvert).
    \end{equation*}
    Letting $k_{X,T} = \epsilon^{-1}_T (1 \vee k_X)(\lambda_X \nmax^2/m_X +1) \nmax$, we obtain for any $X \in \{H,W\}$ and $(S,I) \in \config$,
    \begin{equation*}
    \begin{aligned}
        \left\lvert \frac{\tau_{\overline{X}}(y)}{s}Sn^X_{S,I} -  \frac{\tau_{\overline{X}}(\hat{y})}{\hat{s}}S \hat{n}^X_{S,I} \right\rvert & \leq \epsilon^{-1}_T (1 \vee k_X) (\lvert \tau_{\overline{X}}(y) S n^X_{S,I} - \tau_{\overline{X}}(\hat{y}) S \hat{n}^X_{S,I} \rvert + \lvert s - \hat{s} \rvert) \\
        & \leq k_{X,T} \inftynorm{y - \hat{y}}.
    \end{aligned}
    \end{equation*}
    As a consequence, we conclude that 
    \begin{equation*}
        \lvert f_{X,S,I}(y) - f_{X,S,I}(\hat{y}) \rvert \leq (c'_X + k_{X,T}) \inftynorm{y - \hat{y}}.
    \end{equation*}
    This establishes the desired Lipschitz continuity of $f$ on $V_T$, with associated Lipschitz constant $c_T = \max(c_s, c_i, c'_H + k_{H,T}, c'_W + k_{W,T})$. 

    Suppose now that there are two solutions $y$ and $\hat{y}$ of Equations  \crefrange{eq:subeq-sysdyn-si}{eq:subeq-sysdyn-structures} such that $y(0) = \hat{y}(0)$. It then holds that
    \begin{equation*}
        \inftynorm{y(T) - \hat{y}(T)} \leq \int_0^T \inftynorm{ f(y(t)) - f(\hat{y}(t)) } dt \leq c_T \int_0^T \inftynorm{y(t) - \hat{y}(t) }.
    \end{equation*}
    Thus Gronwall's lemma ensures that $\inftynorm{y(t) - \hat{y}(t)} = 0$ for any $t \leq T$. The desired conclusion on uniqueness follows.
    \bigskip
    
    (iii) In order to establish (iii), it remains to show that the initial condition $y^*$ defined by Equation \eqref{eq:initialcdts} belongs to $V$. Let us start by noticing that, following Equation \eqref{eq:initialcdts}, for any $X \in \{H,W\}$,
    \begin{equation*}
        \sum_{(S,I) \in \config} n^X_{S,I}(0)  = \sum_{n=2}^{\nmax} \pi^X_n \sum_{I=0}^{n-1} \binom{n}{I} \varepsilon^I (1 - \varepsilon)^{n-I} = \sum_{n=2}^{\nmax} \pi^X_n (1 - \varepsilon^n) \leq 1.
    \end{equation*}
    Similarly, 
    \begin{equation*}
        \begin{aligned} 
        \sum_{(S,I) \in \config} S n^X_{S,I}(0)  & = \sum_{n=2}^{\nmax} \pi^X_n \sum_{I=0}^{n-1} (n-I) \binom{n}{I} \varepsilon^I (1 - \varepsilon)^{n-I} = \sum_{n=2}^{\nmax} \pi^X_n n(1 - \varepsilon) \\
        &= (m_X - \pi^X_1)(1 - \varepsilon) \leq m_X s(0),
        \end{aligned} 
    \end{equation*}
    where we have used the fact that $s(0) = 1 - \varepsilon$, and recognizing $\E[n-B]$ for $B \sim \mathcal{B}(n,\varepsilon)$ to deduce the second equality. The other conditions following immediately from \eqref{eq:initialcdts}, we conclude that $y^* \in V$.
\end{proof}

\end{appendix}

\bigskip
\noindent {\bf Acknowledgment.} I would like to thank my thesis supervisor Vincent Bansaye for his continuous support during the elaboration of this work and detailed feedback on the manuscript, Viet Chi Tran for fruitful discussions on tightness criteria, as well as Frank Ball and Marie Doumic for pertinent discussions on related topics. Finally, I am very grateful to the late Elisabeta Vergu for her guidance and stimulating remarks. She will be missed. 


\bibliographystyle{plain}
\bibliography{bib-gde-pop}

\end{document}